\documentclass{biometrika}
\usepackage{graphicx} 
\graphicspath{{./art/}}

\usepackage{amsmath,amsfonts,amssymb}

\usepackage{subcaption}
\usepackage{appendix}
\usepackage{float}
\usepackage{hyperref}
\usepackage[plain,noend]{algorithm2e}
\usepackage{braket}
\usepackage{placeins}

\makeatletter
\renewcommand{\algocf@captiontext}[2]{#1\algocf@typo. \AlCapFnt{}#2} 
\def\@algocf@capt@plain{top}
\renewcommand{\algocf@makecaption}[2]{%
  \addtolength{\hsize}{\algomargin}%
  \sbox\@tempboxa{\algocf@captiontext{#1}{#2}}%
  \ifdim\wd\@tempboxa >\hsize
    \hskip .5\algomargin%
    \parbox[t]{\hsize}{\algocf@captiontext{#1}{#2}}
  \else%
    \global\@minipagefalse%
    \hbox to\hsize{\box\@tempboxa}
  \fi%
  \addtolength{\hsize}{-\algomargin}%
}
\makeatother

\usepackage{tikz}

\DeclareMathOperator{\tkr}{%
\begin{tikzpicture}%
\node [inner sep=0pt,draw,circle,scale=0.5] (0,0) {KR};
\end{tikzpicture}%
}

\newcommand{\RR}{\mathbb{R}}

\newcommand{\EE}{\mathbb{E}}
\newcommand{\PP}{\mathbb{P}}

\newcommand{\calO}{\mathcal{O}}
\newcommand{\calC}{\mathcal{C}}
\newcommand{\calM}{\mathcal{M}}
\newcommand{\calE}{\mathcal{E}}
\newcommand{\calP}{\mathcal{P}}
\newcommand{\calJ}{\mathcal{J}}
\newcommand{\calD}{\mathcal{D}}
\newcommand{\calG}{\mathcal{G}}
\newcommand{\calL}{\mathcal{L}}

\newcommand{\hatU}{\widehat{U}}
\newcommand{\hatZ}{\widehat{Z}}
\newcommand{\hatX}{\widehat{X}}
\newcommand{\hatz}{\hat{z}}
\newcommand{\hatD}{\widehat{D}}
\newcommand{\hatB}{\widehat{B}}
\newcommand{\hatV}{\widehat{V}}

\newcommand{\tilZ}{\widetilde{Z}}
\newcommand{\tilU}{\widetilde{U}}
\newcommand{\tilD}{\widetilde{D}}
\newcommand{\tilB}{\widetilde{B}}
\newcommand{\tilC}{\widetilde{C}}
\newcommand{\tilz}{\tilde{z}}

\newcommand{\nmin}{n_{\min}}

\newcommand{\rank}{\mbox{rank}}
\newcommand{\SVD}{\mbox{SVD}}
\newcommand{\Cov}{\mbox{Cov}}
\newcommand{\diag}{\mbox{diag}}

 \newcommand {\Cksmall}{C_{K, \mathrm{small}}}
 
 \usepackage{newtxtext}
\usepackage[subscriptcorrection]{newtxmath}

\title{KRAFTY: Khatri-Rao Framework for Joint Cluster Recovery}
\author{Siyi Gao}
\affil{Department of Statistics, University of Virginia\\ Charlottesville, VA 22904, USA
\email{sg7nx@virginia.edu}}

\author{Zachary Lubberts}
\affil{Department of Statistics, University of Virginia\\ Charlottesville, VA 22904, USA
\email{zlubberts@virginia.edu}}

\author{Marianna Pensky}
\affil{Department of Mathematics, University of Central Florida\\ Orlando, FL 32816, USA
\email{Marianna.Pensky@ucf.edu}}

\date{}

\begin{document}

\maketitle

\begin{abstract}
When multiple datasets describe complementary information about the same set of entities, for example, brain scans of an individual over time, global trade network across years, or user information across social media platforms, integrating these snapshots allows us to see a more holistic picture. A common way of identifying structure in data is through clustering, but while clustering may be applied to each dataset separately, we learn more in the multi-view setting by identifying joint clusters. We consider a clustering problem where each view conflates some of these joint clusters, only revealing partial information, and seek to recover the true joint cluster structure. We introduce this multi-view clustering model and a method for recovering it: the transposed Khatri–RAo Framework for joinT cluster recoverY (KRAFTY). The model is flexible and can accommodate a variety of data-generating processes, including latent positions in random dot product graphs and Gaussian mixtures. A key advantage of KRAFTY is that it represents joint clusters in a space with sufficient dimension so that each joint cluster occupies an orthogonal subspace in the transposed Khatri-Rao matrix, which results in a sharp drop in the scree plot at the true number of joint clusters, enabling easy model selection.
Our simulations show that when the number of joint clusters exceeds the sum of the numbers of clusters in each individual view, our method outperforms existing methods in both joint clustering accuracy and estimation of the number of joint clusters. 

\end{abstract}

\begin{keywords}
Community detection; Khatri-Rao product; Multi-view clustering
\end{keywords}

\section{Introduction}
\label{sec:introduction}

\subsection{Background and Motivation}
Community detection and clustering are core tasks in statistics and machine learning \cite{ClusterAnalysis,ClusteringMethods,abbe2023community,Fortunato2022}. Clustering analysis is defined as partitioning objects into groups (clusters) so that within-group similarity is high and between-group similarity is low; similarity may be defined by network connectivity, feature-space distances, or other affinities \cite{ClusterAnalysis,ClusteringMethods, vonluxburg2007}. Cluster analysis has been applied to data in several modern applications, such as genetic profiles \cite{toh2002} and social-network snapshots \cite{handcock2007}, to uncover meaningful latent patterns, and numerous techniques have been developed for single-dataset clustering \cite{ClusterAnalysis,ClusteringMethods,vonluxburg2007, Jin2023}.

Early work in the multi-view setting focused on recovery of a single, consistent clustering across sources \cite{abbe2023community, Fortunato2022, Jin2023}. More recently, multi-view clustering has gained attention \cite{Yang2018, MultilayerSBM} as many applications provide datasets from different sources or views, and joint clustering can yield a more informative partition. There has also been a growing interest in studying differences between subspaces across networks or clusterings \cite{macdonald2021, UASE, arroyo2020, agterberg2025}, leading to numerous multi-view spectral clustering approaches \cite{JingLi2021, Lei02102023}. A common multi-view spectral approach concatenates the per-view spectral embeddings and then performs a second spectral decomposition followed by clustering, called Multiple  Adjacency Spectral Embedding ({\bf MASE}) in  \cite{arroyo2020}. Such approaches have two drawbacks. First, rank deficiency: each view's embedding dimension is typically set to its own number of clusters, so concatenation yields a space of dimension at most the sum of per-view cluster counts, whereas the true number of joint clusters can be as large as the product across views. Second, determining the number of joint clusters: these methods usually lack an explicit estimator for the joint number of clusters, relying instead on an ``elbow'' in the scree plot, which may be ambiguous. To address these challenges, our study introduces the transposed Khatri-Rao product framework, which provides sufficient rank for estimating the number of joint clusters and achieves strong performance in both estimating the number of joint clusters and joint clustering.

\subsection{Notations}
For a positive integer $n$, we denote $[n] = \{1, \dots, n\}$. We denote by $\mathcal{O}_{r,k}$   the set of $r\times k$ matrices with orthonormal columns, where $\mathcal{O}_r=\mathcal{O}_{r,r}$. 
We denote the set of clustering matrices that partition $n$ objects into $K$ clusters by $\mathcal{C}_{n,K}$. Here, $\mathcal{C}_{n,K}$
consists of $n \times K$ binary matrices where each row has one value 1 and the rest are zeros.
We denote the $i$-th row and the  $j$-th column of a matrix $A$ by, respectively, $A(i,:)$ and $A(:,j)$.
We denote the identity matrix of order $m$ by $I_m$. We denote the matrix of $K$ left singular vectors of matrix $X$ by 
$\SVD_K(X)$.
We denote the Frobenius, spectral, and $2\rightarrow\infty$ matrix norms by, respectively,
$\|\cdot\|_F, \|\cdot\|$ and $\|\cdot\|_{2,\infty}$, where $\|A\|_{2,\infty}= \displaystyle \max_i \|A(i,:)\|$. 
We denote the Hadamard and the  Kronecker  products of matrices $A$ and $B$ by, respectively,
$A\circ B$ and $A\otimes B$.
For matrices $A  \in \RR^{n \times m}$ and $B\in \RR^{n\times q}$ with the same number of rows, we define the 
transposed-Khatri-Rao product  as $A\tkr B \in \RR^{n\times mq}$, where for $i\in[n],$
\begin{equation*} 
(A\tkr B)(i,:) = A(i,:) \otimes B(i,:)=[A(i,1)B(i,:)\,|\cdots|\,A(i,m)B(i,:)] .
\end{equation*}
Some properties of this product are provided in Appendix~\ref{sec:KR_properties}. We use $C_{\tau}$ for a generic constant that depends on $\tau$ only and can take different values throughout the paper.

\subsection{The Model} \label{sec:model}
In this paper, we consider the common scenario where data are obtained from $V$ sources (views). Each of these $V$ views is associated with a clustering assignment $z_v:\ [n] \to K_v$
with associated clustering matrix $Z_v \in \{0,1\}^{n \times K_v}$, where $K_v$ is the number of clusters in view $v \in [V]$. 
These clustering assignments may have any joint configuration. 
Our goal is to recover the joint clustering assignment function $z: [n] \to [K]$ or  associated
clustering matrix $Z \in \{0,1\}^{n \times K}$ that describes the joint configuration of clusters. To make this precise, let the total number of clusters $K$ in the  joint assignment be a value
\begin{equation} \label{eq:K_joint}
\max_{v \in [V]}\, K_v \leq K \leq \prod_v K_v,
\end{equation}
and let $Z\in \{0,1\}^{n\times K}$ be the true joint clustering matrix, which has exactly one nonzero entry in each row. The individual clustering matrices $Z_v=ZP_v$, where $P_v\in\{0,1\}^{K\times K_v}$ has at least one nonzero entry in each column, and exactly one nonzero entry in each row. This ensures that all of the individual clusterings are consistent with the joint clustering, but may combine some of the joint clusters together. Typically, all views will have fewer clusters individually than the true number of joint clusters.

In what follows, we assume that $V$  is small and data consist  of either view-specific 
estimated clustering matrices $\hatZ_v \in \calC_{n,K_v}$, $v \in [V]$,
or  estimated matrices of singular vectors $\hatU_v \in \calO_{n,K_v}$, $v \in [V]$, that can be used for clustering.
We make no additional assumptions on the sources of the data, however, a common scenario would be the results of $k$-means applied to the $V$ different views.

While the problem initially seems easy, in reality, it is not. Consider a simple situation where 
$V=2$. Let $K_1=K_2 = 3$, let $n$ be large, and let clusters in both views be generated uniformly at random with probability $1/3$.
Then, the clustering assignments $z_1$ and $z_2$ are independent and the combined clustering consists of $K_1 \times K_2=9$
groups of almost equal sizes.  If one uses a concatenated matrix $\tilZ = [Z^{(1)}|Z^{(2)}] \in \{0,1\}^{n \times 6}$,
where $Z^{(1)}, Z^{(2)}$ are individual clustering matrices,  as a basis for joint clustering, then $\rank(\tilZ) = 5$. 
For example, if $n = 1000$, the singular values of $\tilZ$  are (25.8852, 18.7956, 18.5849, 18.2198,  17.3010, 0). 
Hence, the matrix $\tilZ$ is rank-deficient, and it is highly non-trivial to choose the number of clusters in the joint assignment.

As a remedy, here we propose to replace $\tilZ$ by the matrix $Z^{(1,2)}=Z^{(1)}\tkr Z^{(2)}$.
It is easy to notice that matrix $Z^{(1,2)} \in \calC_{n,K_1 K_2}$, so it is a clustering matrix 
and it has 9 almost identical singular values  (11.1803, 11.0454, 10.9087, 10.7238, 10.6301, 10.5830, 10.3923, 9.8489, 9.4340).
We shall show later that, in general, the rank of matrix $Z^{(1,2)}$ can be used as a proxy for the number of clusters in the 
joint assignment, and its estimated version (as well as its version based on estimated matrices of singular values)
forms a solid foundation for the recovery of the combined clustering assignment.

As such, our paper   introduces the transposed Khatri-RAo Framework for joinT cluster recoverY (KRAFTY).   
A key advantage of KRAFTY is that it represents joint clusters in a space with sufficient dimension, 
which enables more reliable estimation of the number of joint clusters.
Figure~\ref{fig:KR_example} illustrates the key idea behind application of the transposed Khatri-Rao product. Here, $Z^{(1)}$ and $Z^{(2)}$ are clustering matrices for two datasets over the same set of entities.
In this example, $V=2$ and both views have $K_1 = K_2 = 3$ clusters. Cluster  1 is common for both views while 
elements of clusters 2 and 3 in the  view 1 can be scattered among clusters 2 and 3 in the view 2. 
This leads to a joint configuration with $K=5$ different clusters. Matrix $Z^{(1,2)} = Z^{(1)}\tkr Z^{(2)}$
has $3 \times 3 = 9$ columns, where the column indexed by $(k_1, k_2)$,   corresponds to the combination of clusters $k_1$ and $k_2$ in the views 1 and 2, respectively. It is easy to see that  
the only non-zero columns of  $Z^{1,2)}$ correspond to the non-empty combinations of clusters, so that 
it has 4 zero columns.  Again, if $n=1000$, matrix $Z^{1,2)}$ has 5 large singular values 
$(18.7617, 13.1149, 12.8452, 12.4900, 12.4499)$, followed by 4 zeros.
 
\begin{figure}[htb!]
    \centering
    \includegraphics[width=10.5cm,height=6.5cm]{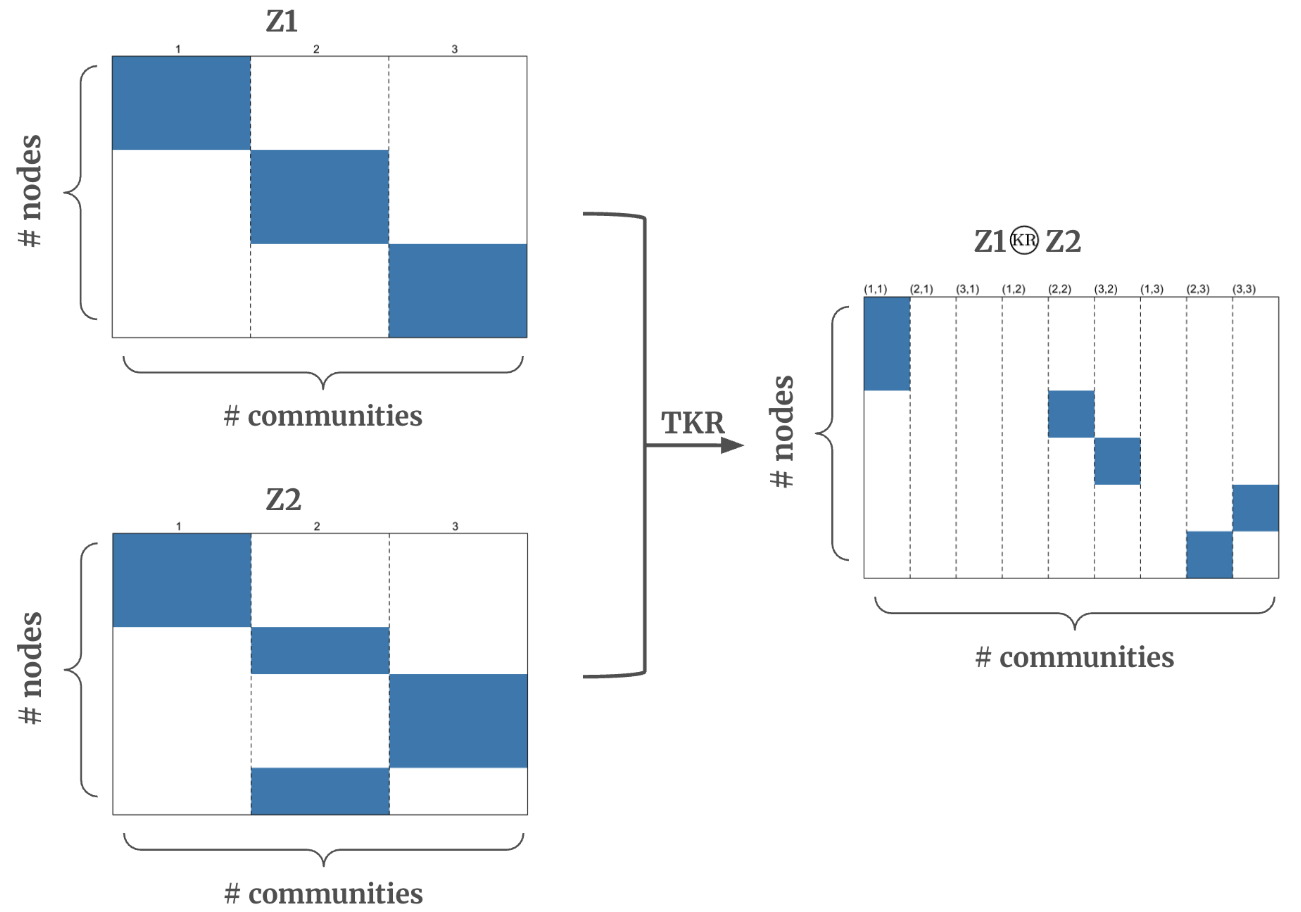}
    \caption{Graphical illustration of the intuition behind the transposed Khatri-Rao product.}
    \label{fig:KR_example}
\end{figure}


\subsection{Related Work}
In recent years, community-based statistical models have gained significant attention because they treat clusters not merely as features of the data, but as the generative mechanism behind it \cite{Fortunato2022}. Prominent examples include Gaussian mixture models (GMMs) and the stochastic block model (SBM) \cite{Holland1983}. These models have led to a rich literature on community detection in single dataset \cite{abbe2023community, Fortunato2022}, and a collection of spectral clustering methods (\cite{Jin_2015, vonluxburg2007, qin2013regularized, Rohe2011, Lei02102023, loffler2020}). The core idea is to use a matrix factorization, such as the eigen-decomposition or singular value decomposition, to extract a lower-dimensional embedding, which is then clustered using methods such as $k$-means or hierarchical clustering \cite{vonluxburg2007}. These methods perform well in practice and have strong theoretical guarantees: under suitable conditions they are consistent, and can even achieve perfect clustering (\cite{Lei2015, lyzinski2015, loffler2020, pensky2024daviskahan, sussman2012}). Our method builds on the nice properties of single-source spectral clustering by exploiting the spectral embeddings from each data set.

Moving to multiview clustering, a number of recent studies have been conducted in the multilayer SBM framework (\cite{MultilayerSBM}). Several clustering methods have been developed, including spectral clustering methods (\cite{JingLi2021, Lei02102023, agterberg2025, DeDomenico2014, han2015}) and matrix factorization (\cite{paul2018}). Many methods involve obtaining spectral embeddings from each graph and then concatenating them for a subsequent spectral decomposition, as in MASE \cite{arroyo2020} and UASE \cite{UASE}. However, these side-by-side concatenation face limitations when the number of joint clusters exceeds the sum of the per-layer cluster counts: because each layer's embedding dimension is typically set to its own number of clusters, the maximum rank of the concatenated matrix is at most that sum, so the representation can become rank-deficient and fail to capture the full joint structure. Another important line of work is to estimate the number of clusters in multi-view data. Existing methods often rely on visual inspection of the scree-plot ``elbow'' (\cite{Ledesma_Valero-Mora_Macbeth_2015}) or the automatic dimension selection method by \cite{ZHU2006918}. A common problem is that the eigenvalues may not exhibit a clear drop, making the dimension hard to determine even for automatic procedures. Driven by these challenges, our work introduces the transposed Khatri-Rao product framework, which provides sufficient rank for estimating the number of joint clusters and a clear ``elbow''  exactly at the joint cluster count.
\\

This rest of this paper is organized as follows. Section~\ref{sec:Methodology} introduces our model and presents the proposed transposed Khatri-Rao product method. In Section~\ref{sec: Theory}, we study the theoretical statistical performance of our proposed method. We evaluate the empirical performance of our method in clustering and estimating the number of joint clusters, and compare it with MASE in Section~\ref{sec: Simulation}. Section~\ref{sec: Real-data} demonstrates the application to FAO food and agricultural trade data, showing the ability of our approach to uncover joint clusters of countries in the global trade network.

\section{Methodology}
\label{sec:Methodology}

\subsection{KRAFTY Joint Clustering Methodology}

Since $V$ is a small integer, for simplicity, we consider the case of $V=2$.
Before launching into a description of the methodology, we closely examine  
the relationship between individual clustering matrices $Z^{(1)}$, $Z^{(2)}$, their transposed Khatri-Rao 
product
\begin{equation} \label{eq:TKR_Z}
    Z^{(1,2)} = Z^{(1)} \tkr Z^{(2)} \in \{0,1\}^{n \times K_1 K_2}
\end{equation}
and the true joint clustering matrix $Z \in \calC_{n,K}$.

As it is easy to see from Section~\ref{sec:model} and Figure~\ref{fig:KR_example}, the matrix $Z$ can be obtained
from the matrix $Z^{(1,2)}$  by removing its zero columns. The latter can be accomplished by introducing a matrix 
$H \in \{0,1\}^{K_1 K_2 \times K}$, where $H$ is formed by the columns of the identity matrix of order $K_1 K_2$ corresponding to the nonzero columns of $Z^{(1,2)}$.   
Therefore, with some abuse of notations,  for $k_1 \in [K_1]$, $k_2 \in [K_2]$ and $k \in [K]$, one has 
$H ((k_1,k_2),:) = 0$   iff  $Z^{(1,2)}(:,(k_1,k_2)) = 0$, and each column of matrix $H$ has one element equal to 1.
These definitions imply the following relationships
\begin{equation} \label{eq:TKR}
Z  = Z^{(1,2)} H, \quad Z^{(1,2)} = Z H^\top, \quad H^\top H = I_K, \ \ H \in \calO_{K_1 K_2,K}.
\end{equation}
Here, $K$ satisfies conditions \eqref{eq:K_joint}, so that $K \leq K_1 K_2$. 
In addition, $Z \in \calC_{n,K}$, so it is a clustering matrix.

%
\begin{algorithm} 
\caption{\ Joint Clustering with KRAFTY}
\label{alg:KRAFTY}
\begin{flushleft} 
{\bf Input:} $\hatZ^{(v)} \in \calC_{n,K_v}$ or  $\hatU^{(v)} \in \calO_{n,K_v}$, $v=1,2$;  the final number of clusters $K$;   parameter $\epsilon > 0$ \\
{\bf Output:} Estimated joint  clustering function $\hatz : [n] \to [K]$ 
\\
{\bf Steps:}\\
{\bf 1:} Obtain matrix $\hatU$ using formula \eqref{eq:hatU_from_hatZ} or \eqref{eq:hatU_from_hatU} \\
{\bf 2:}  Cluster  $n$ rows of  $\hatU$ into $K$ clusters using   $(1+\epsilon)$-approximate $k$-means clustering to obtain the 
estimated clustering function $\hatz.$   
\end{flushleft} 
\end{algorithm}
%

If we knew the true matrices $Z^{(1)}$ and  $Z^{(2)}$, then matrix $Z$ in \eqref{eq:TKR} 
would provide the joint clustering assignment. However, we are given either their estimated versions
 $\hatZ^{(v)} \in \calC_{n,K_v}$,   or estimated matrices of 
singular vectors $\hatU^{(v)} \in \calO_{n,K_v}$, $v=1,2$,   
that can be used for finding individual clustering assignments. 
In order to design a unified clustering procedure, consider diagonal matrices 
\begin{equation} \label{eq:diag}
D^{(1,2)} = \left(Z^{(1,2)}\right)^\top Z^{(1,2)}, \quad D = Z^\top Z,
\end{equation}
and observe that, due to \eqref{eq:TKR}, one has  $D = H^\top D^{(1,2)} H$. 
Here, $D(k,k) = n_k$, the number of elements in the joint cluster $k \in [K]$.
Then it follows from  \eqref{eq:TKR_Z} and  \eqref{eq:diag}  that 
\begin{equation}  \label{eq:joint_U}
    U = Z D^{-1/2} =Z^{(1,2)} H D^{-1/2}\in \calO_{n,K}  \quad \Longrightarrow \quad
Z^{(1,2)} = \left(Z^{(1)} \tkr Z^{(2)}\right) = U D^{1/2} H^\top,
\end{equation}
 and the second equality gives the SVD of the matrix 
 $Z^{(1,2)}$.  Here  rows of $U$ are distinct and correspond to $K$ clusters, hence the clustering assignment 
$z: [n] \to [K]$  can be obtained by clustering rows of the matrix $U$. Moreover, $U$ can be obtained as the top $K$ left singular vectors
of $Z^{(1,2)}$, i.e., $U = \SVD_K \left(Z^{(1,2)}\right)$.
When one has  $\hatZ^{(v)}$  instead of $Z^{(v)}$, $v=1,2$, the matrix $U$ can be replaced by 
\begin{equation} \label{eq:hatU_from_hatZ}
\hatU = \SVD_K \left(\hatZ^{(1,2)}\right), \quad   \hatZ^{(1,2)} =  \left(\hatZ^{(1)} \tkr \hatZ^{(2)} \right). 
\end{equation}

Now, consider the case where data consist of matrices  
\begin{equation*} 
U^{(v)} = Z^{(v)} \left(D^{(v)}\right)^{-1/2} \in \mathcal{O}_{n,K_v}, \quad D^{(v)} = (Z^{(v)})^T Z^{(v)},\ \   v=1,2.
\end{equation*}
Then, due to property {\bf P1} of Khatri-Rao product (see Section~\ref{sec:KR_properties}) and properties of the Kronecker product, one has
$$
U^{(1)} \tkr U^{(2)}  = \left(Z^{(1)} (D^{(1)})^{-1/2}\right) \tkr \left(Z^{(2)} (D^{(2)})^{-1/2}\right) = Z^{(1,2)} \left(D^{(1)} \otimes D^{(2)} \right)^{-1/2}
$$
Hence, it follows from \eqref{eq:joint_U} that 
$U$ and $U ^{(1)} \tkr U ^{(2)}$  are related as 
\begin{equation*} 
 U^{(1,2)}:=    U^{(1)} \tkr U^{(2)}   
= U D^{1/2} H^\top (D^{(1)} \otimes D^{(2)})^{-1/2}.
\end{equation*}
The right-hand side of this equation is not an SVD of $U^{(1,2)}$. In order to obtain an SVD of $U^{(1,2)}$
consider a diagonal matrix $\tilD$ which places $n_{k_1}^{(1)}n_{k_2}^{(2)}$ in the $k$th diagonal entry when the $k$th joint cluster corresponds to clusters $k_1$ and $k_2$ in the individual views:
\begin{equation*} 
    \tilD = H^\top \left(D^{(1)} \otimes D^{(2)}\right) H \in \RR^{K \times K}.
\end{equation*}
$\widetilde{D}  \neq D$ due to 
$D^{(1,2)} \neq D^{(1)} \otimes D^{(2)}.$ We have that  
$H^\top \left(D^{(1)} \otimes D^{(2)}\right)^{-1/2} = \tilD^{-1/2} H^\top$,
and therefore
\begin{equation} \label{eq:U12_SVD}
   U^{(1,2)} =  (U^{(1)} \tkr U^{(2)}) = U \, D^{1/2} \widetilde{D}^{-1/2}  H^\top.   
\end{equation}
Since the matrix $D^{1/2} \widetilde{D}^{-1/2}$ is diagonal, 
this equation provides an SVD of 
$U^{(1,2)}$ 
with the matrix  $U$ of the left singular vectors, i.e., $U = \SVD_K \left( U^{(1,2)} \right).$
Again, if matrices $\hatU^{(v)}$ are available instead of $U^{(v)}$, $v=1,2$, 
the matrix $U$ can be replaced by $\hatU$ where 
\begin{equation} \label{eq:hatU_from_hatU}
\hatU = \SVD_K \left(\hatU^{(1,2)}\right), \quad \hatU^{(1,2)} = \left(\hatU^{(1)} \tkr \hatU^{(2)} \right). 
\end{equation}

 A visual comparison of the results of using $\hatU^{(v)}$ or $\hatZ^{(v)}$ as inputs to KRAFTY is provided in Appendix Figure~\ref{fig:KR-algo2}.
 
\subsection{Joint Clustering Algorithms}

To obtain a joint clustering from the transposed Khatri-Rao matrix, we can simply apply a clustering algorithm to the rows of $\hat{U}$ obtained via Equation~\eqref{eq:hatU_from_hatZ} or \eqref{eq:hatU_from_hatU}. This is described in Algorithm~\ref{alg:KRAFTY}.

As mentioned above, another way of obtaining joint clusters is to concatenate the matrices $\hatZ^{(v)}$ or $\hatU^{(v)}$,
$v=1,2$. We refer to this method as MASE since it can be viewed as a version of  MASE in  \cite{arroyo2020} .
The steps are provided in Algorithm~\ref{alg:MASE}.


%
\begin{algorithm}
\caption{\ Joint Clustering with MASE}
\label{alg:MASE}
\begin{flushleft}  
{\bf Input:} $\hatZ^{(v)} \in \calC_{n,K_v}$ or  $\hatU^{(v)} \in \calO_{n,K_v}$, $v=1,2$;  the final number of clusters $K$;   
parameter $\epsilon > 0$ \\
{\bf Output:} Estimated joint  clustering function $\hatz : [n] \to [K]$ 
\\
{\bf Steps:}\\
{\bf 1:} Construct matrix $\tilZ = \left[\hatZ^{(1)}\, | \, \hatZ^{(2)}\right]$ or  
$\tilU = \left[\hatU^{(1)}\, | \, \hatU^{(2)}\right]$ \\
{\bf 2:}  Find $\hatU = \SVD_K \left( \tilZ \right)$ or $\hatU = \SVD_K \left( \tilU \right)$ \\
{\bf 3:}   Cluster  $n$ rows of  $\hatU$ into $K$ clusters using   $(1+\epsilon)$-approximate $k$-means clustering to obtain the
estimated clustering function $\hatz.$   
\end{flushleft} 
\end{algorithm}
%


Section~\ref{sec: Simulation}  will provide a detailed comparison between Algorithms~\ref{alg:KRAFTY}~and \ref{alg:MASE}.
Here, we shall make just a few remarks. While the matrix $Z$ is a clustering matrix, $\tilZ$  is not. 
In addition, while $\rank(Z^{(1,2)}) = \rank (U^{(1,2)})$ is equal to the number of clusters, this is not true for  $\tilZ = [Z^{(1)}| Z^{(2)}]$ or $\tilU = [U^{(1)}| U^{(2)}]$. The matrices $\tilZ$ and  $\tilU$ have ranks at most $K_1 + K_2$, so they can be rank deficient when $K > K_1 + K_2$, as we have shown in Section~\ref{sec:model}. 

The clustering algorithm in Algorithms~\ref{alg:KRAFTY}~and \ref{alg:MASE} is suggested to be $(1+\epsilon)$-approximate $k$-means clustering, but this is not the only possible choice. For example, one can use hierarchical clustering or any other appropriate clustering algorithm. In all simulations and real-data analysis in this paper, we use hierarchical clustering, since we found that it produced more stable results under repeated testing than either $k$-means or Gaussian mixture modeling.



\begin{remark}  \label{rem:Vnot2}
{\bf The case of $V \geq 3$.\ }{\rm 
While this section only presents the case of $V=2$, it is fairly straightforward to extend the KRAFTY
methodology    to the case of $V \geq 3$. Specifically, for Algorithm~\ref{alg:KRAFTY}, 
$\hatZ^{(1,2)}$ in formula \eqref{eq:hatU_from_hatZ}
and $\hatU^{(1,2)}$  in \eqref{eq:hatU_from_hatU} 
should be replaced by, respectively,  
$$
\hatZ^{(1,...,V)} = \left(\hatZ^{(1)} \tkr \ldots \tkr \hatZ^{(V)} \right), \quad 
 \hatU^{(1,...,V)} = \left(\hatU^{(1)} \tkr \ldots \tkr \hatU^{(V)} \right).
$$
If one uses  Algorithm~\ref{alg:MASE}, then in Step~1,  
$$
\tilZ = \left[\hatZ^{(1)}\, | \ldots\, | \, \hatZ^{(V)}\right], \quad
\tilU = \left[\hatU^{(1)}\, | \ldots\,  | \, \hatU^{(2)}\right].
$$
 Since $\rank(\tilZ) \leq K_1 + \cdots + K_V$ is possibly much smaller than $K$, the latter again will result in rank-deficient clustering.}
\end{remark}


\subsection{A Common Data-Driven Example.}
 
 
While in the paper we impose no assumptions on  the data-generating process, one of the most common settings is the 
$k$-means scenario, where  the observed data in view $v$ are given by matrix  
\begin{equation}  \label{eq:k-means-data}
Y^{(v)} = G^{(v)} + \Xi^{(v)}  \in \RR^{n \times d_v}, \quad v \in [V],
\end{equation}
where  matrix $G^{(v)}$ has $K_v$ distinct rows and $\EE(\Xi^{(v)})=0$.
The latter means that there exist clustering functions $z_v:[n]\to[K_v]$ such that 
the $i$-th row of $G^{(v)}$ is $G^{(v)}(i,:) = \mu^{(v)}_k$, 
$i \in [n]$, $k \in [K_v]$.  If $\calM^{(v)} \in \RR^{K_v  \times d_v}$ with rows $\mu^{(v)}_k$, then in matrix form we have
\begin{equation}  \label{eq:k-means-mean}
G^{(v)}  = Z^{(v)}\,  \calM^{(v)},  \quad v=1,2.
\end{equation}
In the above situation, one obtains matrices $\hatU^{(v)} = \SVD_{K_v} (Y^{(v)})$,
and possibly clusters rows of $\hatU^{(v)}$  to obtain matrices  $\hatZ^{(v)}$, $v \in [V]$.
Matrices $Y^{(v)}$ and $G^{(v)}$  in \eqref{eq:k-means-data} and \eqref{eq:k-means-mean} can be of 
any origin. For example, rows $Y^{(v)}(i,:)$  of $Y^{(v)}$  can be independent,
with covariance matrices determined by $k= z_v(i)$, so that 
$\Cov \left(Y^{(v)}(i,:)\right) = \Sigma^{(v)}_{k}$, or they can be adjacency matrices of random graphs.

To further illustrate how the transposed Khatri-Rao product is used in recovering joint clusters, 
in Figure~\ref{fig:KR-algo}, we show a visual representation of the application of our method to two adjacency matrices generated from the stochastic block models with three underlying clusters.  
We assume that the joint clustering structure follows Figure~\ref{fig:KR_example}, so that there are five joint clusters. 
We first perform the SVD on each of them and extract matrices $\hatU^{(v)}, v=1,2$.
Then, we construct $\hatU^{(1,2)}$  using formula \eqref{eq:hatU_from_hatU} and obtain 
$\hatU$ as the matrix of left singular vectors of $\hatU^{(1,2)}$.
The resulting scree plot of singular values of  $\hatU^{(1,2)}$ shows a clear gap 
after the fifth singular value, indicating $K=5$ latent joint clusters. 
Finally, the joint clusters are recovered by applying the $k$-means algorithm with $K=5$ to the 
five left leading singular vectors of the matrix $\hatU^{(1,2)}$.

\begin{figure}[htb!]
    \centering
    \includegraphics[width=\linewidth]{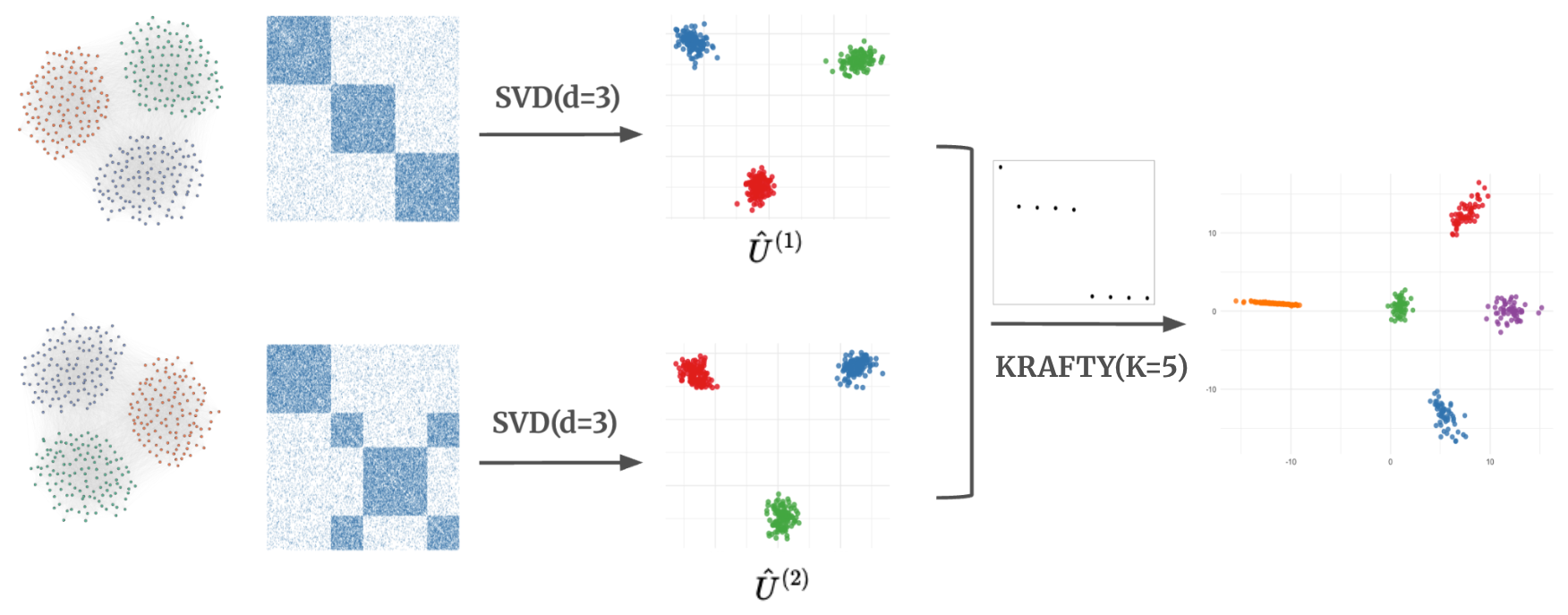}
    \caption{Graphical representation for a common data-driven example.}
    \label{fig:KR-algo}
\end{figure}

\section{Theoretical Results}
\label{sec: Theory}

\subsection{Assumptions}
\label{sec:assump}

In this section, we present guarantees for joint cluster recovery using KRAFTY (Algorithm~\ref{alg:KRAFTY}).
As before, for simplicity, we consider the case of $V=2$, and denote the true clustering matrices
by $\hatZ^{(v)}$, $v=1,2$. We assume that data come in the form of $\hatZ^{(v)}$ or $\hatU^{(v)}$, $v=1,2$. 
In the former case, let  $J_v \subseteq \{1,\dots,n\}$  denote the set of elements  misclassified by 
$\hatZ^{(v)}$ and let $\calE_{v,n}=  |J_v|$  be its cardinality.  Similarly, we denote the true clustering matrix by $Z$ and its 
estimated version by $\hatZ$. Let $J  \subseteq \{1,\dots,n\}$  denote the set of elements  misclassified by 
$\hatZ$ and let $\calE_{n} =|J|$  be the number of clustering errors.

If data are given in the form of  $\hatU^{(v)}$,  $v=1,2$, we shall place upper bounds on the norms of the differences between 
$U^{(v)}$ and $\hatU^{(v)}$. Since matrices $\hatU^{(v)}$ are defined only up to a rotation, we consider matrices 
$W_U^{(v)}$ that deliver the closest match between $U^{(v)}$ and $\hatU^{(v)}$ in Frobenius norm.
In particular, if $(U^{(v)})^\top \hatU^{(v)} = W_1 D_W W_2^T$ is the SVD of  $(U^{(v)})^\top \hatU^{(v)}$, then 
$W_U^{(v)} = W_1 W_2^T \in \calO_{K_v}$.

We say that clustering by $\hatZ$ is {\it consistent} if $n^{-1}\, \calE_{n} \to 0$ as $n \to \infty$.
If  $\calE_{n} \to 0$ as $n \to \infty$, we say that clustering is  {\it perfect}. In what follows, we show that 
under straightforward regularity assumptions, clustering precision by  Algorithm~\ref{alg:KRAFTY} is as good as individual clusterings,
and, in addition, one can correctly identify the number of joint clusters.
We begin by stating some regularity assumptions.

\begin{assumption}[Balanced cluster sizes] 
\label{assump:balancedclusters}
Both the single-view and joint clusterings are assumed to have balanced cluster sizes:
if $n^{(v)}_k$ is the number of elements in cluster $k$ in view $v$, $k \in [K_v]$, and $n_k$ is the number of elements in the joint cluster $k \in [K]$,
then for some positive constants, one has  
\begin{align}
    \left(c_o^{(v)}\right)^2 \,\frac{n}{K_v}
    & \leq \min_k n^{(v)}_k
    \leq \max_k n^{(v)}_k
    \leq\left(C_o^{(v)}\right)^2\, \frac{n}{K_v},\label{eq:balanced_view}\\
      c_o^{2}\,\frac{n}{K}
    & \leq \min_k n_k
    \leq \max_k n_k
    \leq C_o^{2}\,\frac{n}{K}. \label{eq:balanced_joint}
\end{align}
\end{assumption}
%
\begin{assumption} [Finite number of clusters] 
\label{assump:finiteclusternumbers}
The number of clusters in each view and, therefore, the number of joint clusters are finite
\begin{equation} \label{eq:clust_sizes}
    K_1 = O(1), \quad K_2 = O(1), \quad K = O(1), \quad n \to \infty.
\end{equation}  
\end{assumption}
%
\begin{assumption} [Consistent clustering in each view]
\label{assump:consistentZs}
If data are in the form of $\hatZ^{(v)}$, $v=1,2,$ then the number of clustering errors $\calE_{v,n}$ on the basis of $\hatZ^{(v)}$ is such that 
\begin{equation} \label{eq:calEvn}
    n^{-1}\, \calE_{v,n}   \;\to\; 0, 
    \quad \text{as  } n\to\infty.
\end{equation}    
\end{assumption}
%
\begin{assumption} [Consistent subspace estimation] 
\label{assump:consistentUs}
If data are in the form of $\hatU^{(v)}$, $v=1,2,$  then, for some non-random quantities 
$\calE_{on}^{(v)}$ and $\calE_{2,\infty,n}^{(v)}$, $v=1,2$  and an arbitrary  constant $\tau >0$, one has
\begin{align}   \label{eq:Eopvn}   
    & \PP \left\{ \left\| \hatU^{(v)}- U^{(v)} W_U^{(v)}\right\|  \leq C_{\tau}\, \calE_{0,n}^{(v)}\right\} 
       \geq 1 - n^{-\tau},\\
  \label{eq:E2infvn}
    & \PP \left\{ \left\| \hatU^{(v)}- U^{(v)} W_U^{(v)}\right\|_{2,\infty} \leq C_{\tau}\, \calE_{2,\infty,n}^{(v)}\right\} 
       \geq 1 - n^{-\tau},    
\end{align} 
where the constant $C_{\tau}$  depends on $\tau$ only.
\end{assumption}
%
\begin{assumption} [Incoherence of $\hatU^{(v)}$] 
\label{assump:incoherentUhats}
For $v=1$ or $v=2$ and a constant $C_{\tau}$ that depends on $\tau$ only, one has
\begin{equation}
    \PP \left\{ \left\| \hatU^{(v)}\right\|_{2,\infty} \leq 
C_{\tau}\, \frac{\sqrt{K_v}}{\sqrt{n}}\right\} 
       \geq 1 - n^{-\tau}.
    \label{eq:hatUv_2inf}  
\end{equation}
\end{assumption}
%

The assumptions above are quite common. Assumption~\ref{assump:balancedclusters} allows cluster sizes to vary up to constant factors.
If the clusters are highly unbalanced, then it is very hard to distinguish between an erroneous cluster and 
a genuine very small cluster. Assumption~\ref{assump:finiteclusternumbers} states that we are interested in the setting where the number of 
clusters in each view is fixed and does not grow with $n$. Assumption~\ref{assump:consistentZs} addresses the situation where data 
come in the form of estimated clustering matrices, such  that estimated individual clustering assignments are consistent with high probability. Assumption~\ref{assump:consistentUs} handles the case where the data appear as 
estimated matrices of singular vectors. It postulates that, with high probability, the spectral and the two-to-infinity norms of the differences are bounded above by some non-random quantities. The only condition that appears somewhat unusual is Assumption~\ref{assump:incoherentUhats}. However, this assumption 
can be satisfied in a variety of ways. It is easy to see that \eqref{eq:hatUv_2inf} holds if 
$\calE_{2,\infty,n}^{(v)} \leq C\, K_v^{1/2}\, n^{-1/2}$. The latter is true, for 
example, if one is dealing with the $k$-means model 
\eqref{eq:k-means-data}, where the error matrix $\Xi^{(v)}$ has independent sub-gaussian entries.
For instance, the following statement follows directly from Theorem~3.1 of \cite{Xie-AOS2025}.

\begin{lemma}[Incoherence of matrix $\hatU^{(v)}$]  
Consider model \eqref{eq:k-means-data} with $G^{(v)}$ given by \eqref{eq:k-means-mean}. Let $d_v \geq K_v$ and the ratio between the smallest and the largest nonzero eigenvalues of $\calM^{(v)}$ is bounded below by a constant, i.e.
$$
\sigma_{K_v} (G^{(v)}) \Big/ \sigma_1 (G^{(v)})  \geq C_{\sigma}.
$$
Suppose the elements of the matrix $\Xi^{(v)}$ in \eqref{eq:k-means-data} 
are independent sub-gaussian or sub-exponential random variables, so that, for any $t>0$ and some absolute constant $C>0$, one has
$$
\PP \left\{ \sigma^{-1} |\Xi(i,j)| >t \right\} \leq C\, \exp(-t^{a}),
$$
where $a=2$ for the sub-gaussian and $a=1$ for the  sub-exponential 
case. Then, condition \eqref{eq:hatUv_2inf}  in Assumption~\ref{assump:incoherentUhats} is satisfied provided
$$
 \sigma \sqrt{n} (\log n)^{1 + \xi} \Big/ \sigma_{K_v} (G^{(v)}) = O(1)
$$
for some $\xi >0$.
\label{lem:incoherence}
\end{lemma}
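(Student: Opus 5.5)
Since the paper says this follows directly from Theorem~3.1 of \cite{Xie-AOS2025}, the plan is to verify that the hypotheses of that theorem hold, use its $2\to\infty$ perturbation bound for $\hatU^{(v)} - U^{(v)} W_U^{(v)}$, and then convert that into the incoherence bound \eqref{eq:hatUv_2inf} with the triangle inequality:
$$
\|\hatU^{(v)}\|_{2,\infty} \leq \|U^{(v)} W_U^{(v)}\|_{2,\infty} + \|\hatU^{(v)} - U^{(v)} W_U^{(v)}\|_{2,\infty}.
$$

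\textbf{Step 1: the population term.} First I identify the population singular vectors. Since $G^{(v)} = Z^{(v)} \calM^{(v)}$ has rank $K_v$, its column space is contained in that of $Z^{(v)}$. It therefore equals that column space, so $\SVD_{K_v}(G^{(v)}) = U^{(v)} O$ for some $O \in \calO_{K_v}$, with $U^{(v)} = Z^{(v)} (D^{(v)})^{-1/2}$. Each row of $U^{(v)}$ has a single nonzero entry $(n_k^{(v)})^{-1/2}$. By Assumption~\ref{assump:balancedclusters}, this gives
$$
\|U^{(v)} W_U^{(v)}\|_{2,\infty} = \|U^{(v)}\|_{2,\infty} \leq (c_o^{(v)})^{-1} \sqrt{K_v/n}.
$$
This step is deterministic.

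\textbf{Step 2: the perturbation term.} I then apply Theorem~3.1 of \cite{Xie-AOS2025} to $Y^{(v)} = G^{(v)} + \Xi^{(v)}$, whose noise has independent sub-gaussian ($a=2$) or sub-exponential ($a=1$) entries. That theorem gives, with probability at least $1 - n^{-\tau}$, a bound of roughly the form
$$
\|\hatU^{(v)} - U^{(v)} W_U^{(v)}\|_{2,\infty} \lesssim \|U^{(v)}\|_{2,\infty}\left(\kappa\,\frac{\sigma\sqrt{n+d_v}\,(\log n)^{c}}{\sigma_{K_v}(G^{(v)})} + \cdots\right) + \frac{\sigma (\log n)^{c'}}{\sigma_{K_v}(G^{(v)})}.
$$
Here $\kappa = \sigma_1/\sigma_{K_v} \leq C_\sigma^{-1}$, and the log powers depend on $a$. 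Under the signal condition $\sigma\sqrt{n}(\log n)^{1+\xi}/\sigma_{K_v}(G^{(v)}) = O(1)$:
\begin{itemize}
\item the multiplicative factor in parentheses is $O(1)$, so the first part is $O(\sqrt{K_v/n})$;
\item the additive term is at most $O\bigl(n^{-1/2}(\log n)^{c'-1-\xi}\bigr)$, which is $O(n^{-1/2})$ once $\xi$ dominates the log exponent.
\end{itemize}
The $W_U^{(v)}$ in Assumption~\ref{assump:consistentUs} is the Frobenius-optimal rotation, which may differ from the rotation used in \cite{Xie-AOS2025}. This does not matter, because the triangle inequality above only needs some rotation $W$ satisfying the bound.

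\textbf{Main obstacle.} The hard part is the bookkeeping in matching the exact form of Theorem~3.1 of \cite{Xie-AOS2025}. Three things need to be checked:
\begin{itemize}
\item its dependence on $d_v$, which may be larger than $n$; if $\sqrt{n+d_v}$ appears, I would absorb it into the assumption or note that $d_v = O(n)$ is implicit;
\item its log exponents for $a=1$ versus $a=2$, to confirm that a $(\log n)^{1+\xi}$ margin suffices;
\item its probability level, to confirm it can be made $1 - n^{-\tau}$ with a constant $C_\tau$.
\end{itemize}
If the additive term only turns out to be $o(1)\cdot n^{-1/2}$ rather than $O(\sqrt{K_v/n})$ directly, that is still enough. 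Combining Steps 1 and 2 gives $\|\hatU^{(v)}\|_{2,\infty} \leq C_\tau \sqrt{K_v/n}$ with probability at least $1 - n^{-\tau}$, which is \eqref{eq:hatUv_2inf}.
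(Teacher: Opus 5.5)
Your plan is the paper's argument: the paper's only justification is that the lemma follows from Theorem~3.1 of \cite{Xie-AOS2025}, plus its preceding remark that \eqref{eq:hatUv_2inf} holds whenever $\calE_{2,\infty,n}^{(v)} \leq C K_v^{1/2} n^{-1/2}$, which is exactly your triangle inequality with $\|U^{(v)}\|_{2,\infty} \leq (c_o^{(v)})^{-1}\sqrt{K_v/n}$ from Assumption~\ref{assump:balancedclusters} (needed implicitly, though the lemma does not list it). The bookkeeping you flag is precisely what the paper leaves unchecked: the size of $d_v$ relative to $n$, the log exponents for $a=1,2$, the $1-n^{-\tau}$ probability level, and also whether that eigenvector result applies to the rectangular $Y^{(v)}$ directly or only via a symmetric dilation. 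Your write-up is therefore at least as explicit as the original.
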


Another way to ensure validity of  Assumption~\ref{assump:incoherentUhats} is to modify Algorithm~\ref{alg:KRAFTY}
by adding a regularization step. Define a regularization operator $\calP_{\delta} (U, K)$ as
\begin{equation*} 
\calP_{\delta} (U) = \SVD_K (U_{*})\quad \mbox{where} \quad
U_{*} (i,:):= U (i,:)\,  \cdot \min \left\{ \delta, \|U (i,:)\| \right\} \Big/ \|U (i,:)\|, \quad 
i \in [n]. 
\end{equation*}  
Before applying Algorithm~\ref{alg:KRAFTY},
replace one of the matrices $\hatU^{(v)}$ by $\hatU_{\delta}^{(v)}$, 
\begin{equation*} 
\hatU_{\delta}^{(v)} = \calP_{\delta_v} \left(\hatU^{(v)}, K_v \right), \quad \delta_v = C_v\, K_v^{1/2}\, n^{-1/2}, \quad v=1,2,
\end{equation*}
 where $C_v$ is a sufficiently large  constant. 
Subsequently, obtain matrix $\hatU$ in Step~1 of Algorithm~\ref{alg:KRAFTY}  using formula  
\begin{equation*} 
\hatU = \SVD_K \left(\hatU^{(1,2)}\right), \quad \hatU^{(1,2)} = \left(\hatU_{\delta_1}^{(1)} \tkr \hatU^{(2)} \right) \ \ 
\mbox{or}\  \ \hatU^{(1,2)} = \left(\hatU^{(1)} \tkr \hatU^{(2)}_{\delta_2} \right)
\end{equation*}
instead of  \eqref{eq:hatU_from_hatU}. In this case, $\hatU_{\delta_v}^{(v)}$ will satisfy Assumption~\ref{assump:incoherentUhats}, and
together with Assumption~\ref{assump:balancedclusters}, Equation~\eqref{eq:E2infvn} holds for this view with 
$\calE_{2,\infty,n}^{(v)} = n^{-1/2}\, K_v^{1/2}$.
In addition, by properties of regularization (see, e.g., \cite{kim2018sbmhypergraphs}
or \cite{kivela2014multilayer}), one has 
\begin{equation*} 
\left\| \hatU^{(v)}_{\delta_v} - U^{(v)} W_{U, \delta}^{(v)}\right\| \leq 
 2 \sqrt{2}\, \left\| \hatU^{(v)}- U^{(v)} W_U^{(v)}\right\|,  
\end{equation*}
so Equation~\eqref{eq:Eopvn} holds with the same $\calE_{0,n}^{(v)}$ for the regularized version $\hatU^{(v)}_{\delta}$ of $\hatU^{(v)}$, with a different constant $C_{\tau}$.


\subsection{Accuracy of Joint Cluster Recovery}
\label{sec:accuracy}
 

In this section, we evaluate the precision of Algorithm~\ref{alg:KRAFTY}.
We assume that the number of clusters  $K_v$, $v=1,2$, in each view,  and the number of 
joint clusters $K$ are known. We start with the case when one observes $\hatZ^{(v)}$, $v=1,2$.

\begin{theorem}[Accuracy of  joint cluster recovery on the basis of $\hatZ^{(v)}$]
\label{thm:consistentZhats}
Let Assumptions~\ref{assump:balancedclusters},\ref{assump:finiteclusternumbers},\ref{assump:consistentZs} hold, and suppose that the joint clustering assignment $\hat{z}$ is obtained using  Algorithm~\ref{alg:KRAFTY}
applied to $\hatZ^{(v)}$, $v=1,2$. If $n$ is large enough, the number of clustering errors $\calE_n$ for the clustering $\hat{z}$ is at most the sum of clustering errors in the individual views $\hatZ^{(1)}$ and $\hatZ^{(2)}$, i.e.,
\begin{equation} \label{eq:number_clust_errors}
    \calE_n \leq \calE_{1,n} + \calE_{2,n}.
\end{equation}  
Therefore, if the clusterings $\hatZ^{(v)}$ for $v=1,2$ are consistent, 
then clustering on the basis of $\hatZ$ is consistent. 
If the clusterings $\hatZ^{(v)}$, $v=1,2$, are perfect, then clustering on the basis of $\hatZ$ is also perfect. 
\label{th:number_clust_errors}
\end{theorem}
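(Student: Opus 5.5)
The plan is to work directly with the structure of $\hatZ^{(1,2)}=\hatZ^{(1)}\tkr\hatZ^{(2)}$. Since each $\hatZ^{(v)}\in\calC_{n,K_v}$, property {\bf P1} and the definition of $\tkr$ give $\hatZ^{(1,2)}\in\calC_{n,K_1K_2}$: row $i$ has its single $1$ in column $(\hatz_1(i),\hatz_2(i))$.

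\textbf{Step 1: alignment of labels.} After permuting the labels of each view (which only permutes columns of $\hatZ^{(1,2)}$ and leaves $\hatU$ unchanged up to rotation), every $i\notin J_1\cup J_2$ lies in column $(z_1(i),z_2(i))$. This column is exactly the column of the true joint cluster $z(i)$ under $H$ in \eqref{eq:TKR}.

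\textbf{Step 2: column sizes.} Let $\hat m_{(k_1,k_2)}$ denote the column sums of $\hatZ^{(1,2)}$, and set $\calE:=\calE_{1,n}+\calE_{2,n}\ge |J_1\cup J_2|$.
\begin{itemize}
\item For each of the $K$ ``true'' columns (nonzero rows of $H$), $\hat m\ge n_k-\calE\ge c_o^2 n/K-\calE$.
\item Every ``spurious'' column (zero row of $H$) has $\hat m\le \calE$.
\end{itemize}
By Assumptions~\ref{assump:balancedclusters}--\ref{assump:consistentZs}, $\calE=o(n)$ while $K=O(1)$. Hence for $n$ large the $K$ largest columns are exactly the true ones.

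\textbf{Step 3: the embedding $\hatU$.} The matrix $\hatZ^{(1,2)}$ has orthogonal columns, so its SVD is $\hatZ^{(1,2)}=\sum \hat m_c^{1/2}\,(\hatZ^{(1,2)}(:,c)/\hat m_c^{1/2})\,e_c^\top$. Therefore $\hatU=\SVD_K(\hatZ^{(1,2)})$ is, up to an orthogonal $K\times K$ rotation, the matrix whose $c$-th column is $\hatZ^{(1,2)}(:,c)\,\hat m_c^{-1/2}$ for the $K$ true columns. Rows of $\hatU$ then take the following forms:
\begin{itemize}
\item a row in true column $k$ equals $e_k^\top \hat m_k^{-1/2}$ (rotated);
\item a row in a spurious column is $0$.
\end{itemize}
There are at most $\calE$ zero rows.

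\textbf{Step 4: the $k$-means step (main obstacle).} I need to show that $(1+\epsilon)$-approximate $k$-means separates the $K$ point masses despite the extra zero rows. I will use a cost comparison.
\begin{itemize}
\item \emph{Upper bound.} Place centers at the $K$ points $e_k\hat m_k^{-1/2}$. The cost is at most $\calE\max_k\hat m_k^{-1}\le C\,\calE K/n=o(1)$.
\item \emph{Lower bound.} Suppose some solution has no center within distance $\tfrac12\min_k\hat m_k^{-1/2}$ of some point mass $k$. Then its cost is at least $\hat m_k\cdot\tfrac14\hat m_k^{-1}=1/4$, a constant.
\end{itemize}
Hence for $n$ large, any $(1+\epsilon)$-approximate solution has one center near each of the $K$ masses, necessarily distinct. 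The pairwise distances of the masses are at least $\hat m_k^{-1/2}$, which is of order $\sqrt{K/n}$ by Assumption~\ref{assump:balancedclusters}, so every row in a true column is assigned to the center of its own mass.

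Consequently $\hatz$ agrees with $z$, up to a permutation, on all $i\notin J_1\cup J_2$. The only possible errors are rows in $J_1\cup J_2$, so $\calE_n\le|J_1\cup J_2|\le\calE_{1,n}+\calE_{2,n}$, which is \eqref{eq:number_clust_errors}. The consistency and perfect-clustering claims follow immediately from this bound.

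The delicate points are the constant bookkeeping in Step~4 and checking that the threshold ``$n$ large enough'' depends only on $c_o,C_o,K,\epsilon$ and the rate in \eqref{eq:calEvn}.
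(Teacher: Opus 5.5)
Your argument is correct, and it takes a genuinely different route from the paper's. The paper argues by perturbation. It writes $\hatZ^{(1,2)}=Z^{(1,2)}+\Xi$, with $\Xi$ supported on the rows of $J=J_1\cup J_2$ and $\|\Xi\|_F^2=2|J|$. It then uses Wedin's theorem to bound $\|\hatU-UW_U\|_F$, and expands $e_i^\top(\hatU-UW_U)$ into five terms to show this row error is $o(\sqrt{K/n})$ whenever $e_i^\top\Xi=0$. Finally it invokes Lemma~\ref{lem:abbe_fan} to conclude that every $i\in J^c$ is clustered correctly.

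You instead exploit exact structure. Since $\hatZ^{(1,2)}$ is itself a clustering matrix with orthogonal columns, its SVD is explicit. Once $\calE_{1,n}+\calE_{2,n}<c_o^2n/(2K)$, the top-$K$ left singular subspace is spanned precisely by the normalized indicators of the $K$ true cells. The rows of $\hatU$ are then exactly $K$ point masses plus at most $|J|$ zero rows, and an elementary $k$-means cost comparison replaces both Wedin's theorem and Lemma~\ref{lem:abbe_fan}.

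Your route gives a shorter, non-asymptotic proof with an explicit threshold of the form $\calE_{1,n}+\calE_{2,n}\le c\,n/K$, where $c$ depends only on $c_o,C_o,\epsilon$. It also identifies the singular values of $\hatZ^{(1,2)}$ exactly as $\hat m_c^{1/2}$, which yields the bounds of Theorem~\ref{th:cl_number_Z} directly. The paper's perturbation template is heavier than necessary here. Its advantage is that it carries over to the $\hatU^{(v)}$ inputs of Theorems~\ref{th:consistent_U} and~\ref{th:perfect_U}, where no such exact structure is available.

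Two small points in Step~4 need tidying.

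\textbf{The lower-bound constant.} Write $p_k=\hat m_k^{-1/2}e_k^\top$ and $r=\tfrac12\min_j\hat m_j^{-1/2}$. An uncovered mass costs at least $\hat m_k r^2=\tfrac14\hat m_k/\max_j\hat m_j$, not $1/4$. By Assumption~\ref{assump:balancedclusters} this is still bounded below by roughly $c_o^2/(4C_o^2)$, so nothing breaks. The separation $\|p_k-p_j\|=(\hat m_k^{-1}+\hat m_j^{-1})^{1/2}\ge 2\sqrt{2}\,r$ then makes each center strictly nearest to its own mass.

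\textbf{The assignment convention.} Your conclusion that every row at mass $k$ is assigned to its own center uses that $\hatz$ is the nearest-center assignment for the returned centers $\hatB$. An arbitrary pair $(\hatB,\hatz)$ within the $(1+\epsilon)$ budget could reassign on the order of $\epsilon|J|$ additional correctly classified rows. You should state this convention explicitly. The same convention is equally implicit in the paper's use of Lemma~\ref{lem:abbe_fan}.
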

%


\noindent
Now, consider the case where data come in the form of estimated matrices of singular vectors $\hatU^{(v)}$, $v=1,2$.

%
\begin{theorem}[Consistent recovery of joint clusters on the basis of $\hatU^{(v)}$]
\label{thm:consistentUhats}
Let Assumptions \ref{assump:balancedclusters}, \ref{assump:finiteclusternumbers}, \ref{assump:consistentUs}, and \ref{assump:incoherentUhats} hold and $\calE_{0,n}^{(v)} = o(1)$ as $n \to \infty$ for $v=1,2$. 
Then, clustering using Algorithm~\ref{alg:KRAFTY} applied to $\hatU^{(v)},v=1,2$
is consistent.
\label{th:consistent_U}
\end{theorem}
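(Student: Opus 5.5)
The strategy is to bound $\|\hatU^{(1,2)} - U^{(1,2)} W\|$ in spectral norm for a suitable orthogonal $W$. Davis--Kahan then transfers the bound to $\hatU$, and a standard $k$-means misclassification argument finishes the proof.

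\textbf{Step 1: Khatri--Rao perturbation.} Set $W = W_U^{(1)} \otimes W_U^{(2)} \in \calO_{K_1K_2}$. By the mixed-product property of the row-wise Kronecker product (property \textbf{P1}),
$$U^{(1,2)} W = (U^{(1)} W_U^{(1)}) \tkr (U^{(2)} W_U^{(2)}).$$
Write $\hatU^{(v)} = U^{(v)}W_U^{(v)} + E_v$. Bilinearity then gives
$$\hatU^{(1,2)} - U^{(1,2)}W = E_1 \tkr \hatU^{(2)} + (U^{(1)}W_U^{(1)}) \tkr E_2.$$
The key inequality is $\|A \tkr B\| \le \|A\|\,\|B\|_{2,\infty}$. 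To see it, take a unit vector $x$ and write $\|(A\tkr B)x\|^2 = \sum_i (A(i,:) X B(i,:)^\top)^2$, where $X$ is $x$ reshaped as a $K_1\times K_2$ matrix. Each summand is at most $\|A(i,:)X\|^2 \|B(i,:)\|^2$, and summing gives at most $\|B\|_{2,\infty}^2 \|AX\|_F^2 \le \|B\|_{2,\infty}^2\|A\|^2$. The same holds with the roles of $A$ and $B$ swapped.

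Apply Assumption~\ref{assump:incoherentUhats} to whichever view is incoherent; say $v=2$, otherwise swap the order of the two terms. For the first term this gives $\|E_1\tkr \hatU^{(2)}\| \le C_\tau \calE_{0,n}^{(1)}\sqrt{K_2/n}$. For the second term, $\|U^{(1)}\|_{2,\infty} \le (c_o^{(1)})^{-1}\sqrt{K_1/n}$ by Assumption~\ref{assump:balancedclusters}, so it is at most $C\calE_{0,n}^{(2)}\sqrt{K_1/n}$. Hence, with probability at least $1-3n^{-\tau}$,
$$\|\hatU^{(1,2)}-U^{(1,2)}W\| \le C_\tau n^{-1/2}\big(\calE_{0,n}^{(1)}+\calE_{0,n}^{(2)}\big) = o(n^{-1/2}).$$

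\textbf{Step 2: Davis--Kahan.} By \eqref{eq:U12_SVD}, the nonzero singular values of $U^{(1,2)}$ are $\sqrt{n_k/(n^{(1)}_{k_1}n^{(2)}_{k_2})}$. By Assumption~\ref{assump:balancedclusters} these are all of order $n^{-1/2}$, bounded below by $c\,n^{-1/2}$ with $c$ depending on $c_o$, $C_o^{(v)}$ and the $K$'s. The $(K+1)$st singular value is zero, so the gap is at least $c\,n^{-1/2}$. Wedin's $\sin\Theta$ theorem then yields $W_*\in\calO_K$ with $\|\hatU - UW_*\|_F \le \sqrt{2K}\,\|\hatU-UW_*\| = o(1)$.

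\textbf{Step 3: $k$-means.} The rows of $U$ take $K$ distinct values $n_k^{-1/2}e_k$, which are orthogonal. Their pairwise distances are therefore at least $\sqrt{2/\max_k n_k} \ge \sqrt2\,C_o^{-1}\sqrt{K/n}$. The standard lemma for $(1+\epsilon)$-approximate $k$-means (Lei and Rinaldo 2015) bounds the number of misclassified rows by
$$C_\epsilon\, \|\hatU-UW_*\|_F^2\big/ \big(\min_{k\ne l}\|U_k-U_l\|^2\big) = C\,n\cdot o(1),$$
and this applies because the error is small relative to the minimal cluster size. So $\calE_n/n\to0$ with probability at least $1-3n^{-\tau}$, which is consistency.

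The main obstacle is Step 1. A naive bound $\|E_1\tkr E_2\|\le\|E_1\|\|E_2\|$ is false in general, since the row-wise product can inflate the spectral norm. This is precisely why incoherence of one view is needed, and why the decomposition must place the $\|\cdot\|_{2,\infty}$ factor on a matrix known to be incoherent: $\hatU^{(2)}$ by Assumption~\ref{assump:incoherentUhats}, or $U^{(1)}$ by cluster balance.
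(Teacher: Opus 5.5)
Your proof is correct. Its first two steps coincide with the paper's: the same decomposition \eqref{eq:Xi_decomposition}, with the $2\to\infty$ norm placed on $\hatU^{(2)}$ and on $U^{(1)}$, followed by Wedin's theorem with $\sigma_K(U^{(1,2)})\asymp n^{-1/2}$ read off from \eqref{eq:U12_SVD}. The only difference there is in the norm. You control $\Xi=\hatU^{(1,2)}-U^{(1,2)}W$ in spectral norm through your (valid) inequality $\|A\tkr B\|\le\|A\|\,\|B\|_{2,\infty}$, while the paper controls $\|\Xi\|_F$ through property P2; with $K_v=O(1)$ these agree up to constants.

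The genuine divergence is in the final step. You use the counting conclusion of the $k$-means lemma, which is \eqref{eq:cl_err2} of Lemma~\ref{lem:abbe_fan}. The number of misclustered rows is then at most a constant times $\|\hatU-UW_*\|_F^2/s^2\lesssim n\,(\calE_{0,n}^{(1)}+\calE_{0,n}^{(2)})^2$, and consistency follows immediately. The paper instead uses the localization conclusion \eqref{eq:cl_err1}, in four moves:
\begin{itemize}
\item it expands $e_i^\top(\hatU-UW_U)$ into the five terms $R_{i1},\dots,R_{i5}$;
\item it shows that all terms except those driven by $\|\Xi(i,:)\|$, and hence by the per-view row errors $\|\Xi^{(v)}(i,:)\|$, are $o(n^{-1/2})$ uniformly in $i$;
\item it deduces that every misclustered row lies in $\calJ_n^{(1)}\cup\calJ_n^{(2)}$, the rows where some view has row error above a small multiple of $\sqrt{K_v/n}$;
\item it bounds $|\calJ_n^{(v)}|$ by a Markov-type count against $\|\Xi^{(v)}\|_F^2$.
\end{itemize}
Both arguments give the same rate $\calE_n/n\lesssim(\calE_{0,n}^{(1)}+\calE_{0,n}^{(2)})^2$ on the same high-probability event. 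Your route is shorter and needs no row-wise control of $\hatU-UW_*$ at all. The paper's route is heavier, but it is exactly the machinery reused for perfect clustering in Theorem~\ref{th:perfect_U}. It also says \emph{which} rows can be misclustered, mirroring the ``joint errors at most the sum of per-view errors'' conclusion of Theorem~\ref{th:number_clust_errors}.

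One correction to your closing remark: the naive bound $\|E_1\tkr E_2\|\le\|E_1\|\,\|E_2\|$ is not false. Row $i$ of $A\tkr B$ is row $(i,i)$ of $A\otimes B$, so $A\tkr B$ is a row-submatrix of $A\otimes B$ and $\|A\tkr B\|\le\|A\|\,\|B\|$ always holds (property P6). The real issue is that this bound is too weak. Applied to your decomposition, it only gives $\|\Xi\|\le\|E_1\|+\|E_2\|=O(\calE_{0,n}^{(1)}+\calE_{0,n}^{(2)})$, which is not small relative to $\sigma_K(U^{(1,2)})\asymp n^{-1/2}$. Incoherence of one factor is what supplies the missing factor $n^{-1/2}$.
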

%
\begin{theorem}[Perfect clustering]
Let the conditions of Theorem \ref{thm:consistentUhats} hold and, in addition, for $v=1,2$
\begin{equation}  \label{eq:perf_cl_cond_th3}
     \sqrt{n/K_v} \, \calE^{(v)}_{2,\infty,n} = o(1), \quad \text{as }n \to \infty.
\end{equation}
Then for large enough $n$, clustering using Algorithm~\ref{alg:KRAFTY} applied to $\hatU^{(v)},v=1,2$
is perfect with high probability.
\label{th:perfect_U}
\end{theorem}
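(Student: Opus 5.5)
The approach is to prove a row-wise ($2\to\infty$) perturbation bound for the leading singular vectors $\hatU$ of $\hatU^{(1,2)} = \hatU^{(1)}\tkr\hatU^{(2)}$. We then compare it with the separation between the distinct rows of $U = ZD^{-1/2}$. That separation is at least $\sqrt{2}\,\min_k n_k^{-1/2} \ge \sqrt{2}\,C_o^{-1}\sqrt{K/n}$, since rows from different joint clusters are orthogonal vectors of norm $n_k^{-1/2}$. If we can show that, with high probability, $\|\hatU - UW\|_{2,\infty} = o(n^{-1/2})$ for some $W\in\calO_K$, then every row of $\hatU$ lies much closer to its own cluster center $U(i,:)W$ than to any other. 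A standard argument then shows that $(1+\epsilon)$-approximate $k$-means, with $K$ fixed (Assumption~\ref{assump:finiteclusternumbers}), makes no errors. Concretely, any center configuration with cost $O(\cdot)$ must place one center within $o(n^{-1/2})$ of each true center, by the balanced-cluster Assumption~\ref{assump:balancedclusters}. Hence each point is assigned to the correct center.

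The first step bounds the perturbation of the Khatri--Rao matrix itself. Write $\hatU^{(v)} = U^{(v)}W_U^{(v)} + E^{(v)}$. Using the mixed-product property {\bf P1} (rowwise Kronecker), we get
\begin{equation*}
\hatU^{(1,2)} - U^{(1,2)}(W_U^{(1)}\otimes W_U^{(2)}) = E^{(1)}\tkr \hatU^{(2)} + (U^{(1)}W_U^{(1)})\tkr E^{(2)}.
\end{equation*}
Rowwise, $\|(A\tkr B)(i,:)\| = \|A(i,:)\|\,\|B(i,:)\|$. Therefore the $2\to\infty$ norm of the difference is at most
\begin{equation*}
\calE^{(1)}_{2,\infty,n}\,\|\hatU^{(2)}\|_{2,\infty} + \|U^{(1)}\|_{2,\infty}\,\calE^{(2)}_{2,\infty,n} \lesssim n^{-1/2}\bigl(\calE^{(1)}_{2,\infty,n}+\calE^{(2)}_{2,\infty,n}\bigr),
\end{equation*}
where we use Assumption~\ref{assump:incoherentUhats} (taking the incoherent view as the second factor; otherwise we swap roles) and $\|U^{(v)}\|_{2,\infty}\le C\sqrt{K_v/n}$. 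For the spectral norm, we use $\|A\tkr B\|\le \|A\|\,\|B\|_{2,\infty}\sqrt{n}$-type bounds. More precisely, $\|A\tkr B\|_F^2 = \sum_i\|A(i,:)\|^2\|B(i,:)\|^2 \le \|B\|_{2,\infty}^2\|A\|_F^2$. This gives an operator perturbation $\|\Delta\| = O(\calE^{(1)}_{0,n}+\calE^{(2)}_{0,n}) = o(1)$, which is the ingredient already used in Theorem~\ref{thm:consistentUhats}.

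The second step is a $2\to\infty$ Davis--Kahan argument for $\hatU = \SVD_K(\hatU^{(1,2)})$. By \eqref{eq:U12_SVD}, the population matrix $U^{(1,2)}$ has exactly $K$ nonzero singular values equal to $\sqrt{n_k/(n^{(1)}_{k_1}n^{(2)}_{k_2})}$. These are of order $\sqrt{K_1K_2/(nK)}\cdot\sqrt{n}\asymp 1$ by Assumptions~\ref{assump:balancedclusters}--\ref{assump:finiteclusternumbers}, and the remaining singular values are $0$. So the gap is of constant order. We write
\begin{equation*}
\hatU = \hatU^{(1,2)}\hatV\hat\Sigma^{-1},
\end{equation*}
where $\hatV$ holds the right singular vectors and $\hat\Sigma$ the singular values, and expand
\begin{equation*}
\hatU - UW = \Delta\hatV\hat\Sigma^{-1} + U^{(1,2)}(\hatV\hat\Sigma^{-1} - V\Sigma^{-1}W).
\end{equation*}
The first term has $2\to\infty$ norm at most $\|\Delta\|_{2,\infty}/\sigma_K(\hatU^{(1,2)}) = O(n^{-1/2}\sum_v\calE^{(v)}_{2,\infty,n})$, since by Weyl $\sigma_K(\hatU^{(1,2)})\ge c-o(1)$. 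The second term equals $U\Sigma V^\top(\cdot)$. Its $2\to\infty$ norm is bounded by $\|U\|_{2,\infty}\,\|\Sigma\|$ times a spectral-norm quantity controlled by Wedin's theorem, i.e.\ $O(\sqrt{K/n}\,\|\Delta\|) = o(n^{-1/2})$. Here $W$ is the orthogonal Procrustes alignment, and we must check the standard fact that $\|\hat\Sigma^{-1}\hatV^\top V\Sigma - W\|$-type terms are $O(\|\Delta\|)$.

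Combining the two steps, $\sqrt{n}\,\|\hatU - UW\|_{2,\infty} = O\bigl(\sum_v \calE^{(v)}_{2,\infty,n} + \sum_v\calE^{(v)}_{0,n}\bigr)$. This is $o(1)$, because \eqref{eq:perf_cl_cond_th3} with $K_v=O(1)$ gives $\calE^{(v)}_{2,\infty,n}=o(n^{-1/2})\cdot\sqrt{n}$-scaled smallness, and $\calE^{(v)}_{0,n}=o(1)$. This holds on an event of probability at least $1-4n^{-\tau}$, and perfect clustering follows from the separation argument above. The main obstacle is the second step, namely controlling the singular-vector term in $2\to\infty$ norm without extra $\sqrt{K}$ or $\log$ losses. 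I would first try the simple decomposition above, which exploits that the population singular subspace is exactly $\mathrm{col}(U)$ with incoherent rows. If the cross term resists, I would fall back on the bound $\|\hatU-UW\|_{2,\infty}\le \|U\|_{2,\infty}\|U^\top\hatU - W\| + \|(I-UU^\top)\hatU\|_{2,\infty}$ and handle the second piece through $(I-UU^\top)\hatU^{(1,2)} = (I-UU^\top)\Delta$.
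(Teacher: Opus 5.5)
Your overall route matches the paper's. You use the decomposition $\hatU^{(1,2)}-U^{(1,2)}(W_U^{(1)}\otimes W_U^{(2)})=\Xi^{(1)}\tkr\hatU^{(2)}+(U^{(1)}W_U^{(1)})\tkr\Xi^{(2)}$, then a row-wise bound on $\hatU-UW$ whose leading term is the $i$th row of the perturbation times $\hat\Sigma^{-1}$, then Wedin for the projected remainder, then the Abbe--Fan--Wang separation argument. Your term $\Delta\hatV\hat\Sigma^{-1}$ is the paper's $R_{i1}+R_{i2}$ handled at once, and your second term is its $R_{i3}$--$R_{i5}$.

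However, there is a scaling error that makes your key inequality false. The nonzero singular values of $U^{(1,2)}$ are $\sqrt{n_k/(n^{(1)}_{k_1}n^{(2)}_{k_2})}\asymp\sqrt{K_1K_2/(nK)}$, i.e.\ of order $n^{-1/2}$, not of constant order. There is no extra factor $\sqrt n$; for instance, $\|U^{(1,2)}\|_F^2=\sum_i\|U^{(1)}(i,:)\|^2\|U^{(2)}(i,:)\|^2\asymp K_1K_2/n$. Likewise, your own inequality $\|A\tkr B\|_F\le\|B\|_{2,\infty}\|A\|_F$, with $\|B\|_{2,\infty}\lesssim n^{-1/2}$, gives $\|\Delta\|=O\bigl(n^{-1/2}(\calE^{(1)}_{0,n}+\calE^{(2)}_{0,n})\bigr)$, not $O(\calE^{(1)}_{0,n}+\calE^{(2)}_{0,n})$. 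These two slips cancel in the Wedin ratio $\|\Delta\|/\sigma_K=O(\calE^{(1)}_{0,n}+\calE^{(2)}_{0,n})$, so your second term is indeed $o(n^{-1/2})$. They do not cancel in the leading term. You correctly found $\|\Delta\|_{2,\infty}\lesssim n^{-1/2}\sum_v\calE^{(v)}_{2,\infty,n}$, but dividing by $\sigma_K(\hatU^{(1,2)})\asymp n^{-1/2}$ gives $\|\Delta\hatV\hat\Sigma^{-1}\|_{2,\infty}\lesssim\sum_v\calE^{(v)}_{2,\infty,n}$. That is a factor $\sqrt n$ larger than you claim.

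Hence the correct combined bound is $\sqrt n\,\|\hatU-UW\|_{2,\infty}=O\bigl(\sqrt n\sum_v\calE^{(v)}_{2,\infty,n}+\sum_v\calE^{(v)}_{0,n}\bigr)$. Your version would yield perfect clustering under the far weaker condition $\calE^{(v)}_{2,\infty,n}=o(1)$, which cannot hold in general. A single row of $\hatU^{(1)}$ displaced by order $n^{-1/2}$ can land on another view-1 center, and this error passes through the Khatri--Rao product with unchanged relative size. Your unclear last sentence (``$\sqrt n$-scaled smallness'') reflects this: under your scaling the hypothesis is never really used.

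With the correct scaling, the hypothesis $\sqrt{n/K_v}\,\calE^{(v)}_{2,\infty,n}=o(1)$ is exactly what makes the leading term $o(n^{-1/2})$. That puts it below a small fraction of the separation $\sqrt{2K}/(C_o\sqrt n)$. This is precisely the paper's step: it bounds $R_{i1}\le\|\Xi(i,:)\|\,2\sqrt{K_1K_2 n}/(c_D\sqrt K)$ in \eqref{eq:e_i_Xi}, then uses $\|\Xi(i,:)\|\le C\sqrt{K_1K_2}/n$ for a small constant $C$, as in \eqref{eq:xi_i_bound}. Once you fix the normalization, your proof goes through and is essentially the paper's.
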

%
\medskip

\noindent
If one uses the $\hatU^{(v)}$-based version of Algorithm~\ref{alg:KRAFTY}, the
condition $\calE_{0,n}^{(v)} = o(1)$ is necessary for consistent 
clustering in each view $v$. In order for clustering to be perfect under Assumptions \ref{assump:balancedclusters}, \ref{assump:finiteclusternumbers}, \ref{assump:consistentUs}, and \ref{assump:incoherentUhats}, one needs 
$\sqrt{n/K_v} \, \calE^{(v)}_{2,\infty,n}$ to be bounded by a small absolute constant, and the theorem remains true under this weaker condition (though the constant is hard to specify).

In our simulations, we found that hierarchical clustering with complete linkage (see Algorithm~\ref{alg:HC}) exhibited more stable performance for joint clustering compared to $k$-means or Gaussian mixture modeling, so we provide a similar result for the version of Algorithm~\ref{alg:KRAFTY} in which we replace $(1+\epsilon)$-approximate $k$-means with this algorithm. 

\begin{theorem}[Perfect clustering with HC] Let the conditions of Theorem~\ref{th:perfect_U} hold, and let $\hatU$ be the result of Algorithm~\ref{alg:KRAFTY} applied to $\hatU^{(v)},v=1,2$. Then when $n$ is large enough, clustering using Algorithm~\ref{alg:HC} applied to the rows of $\hatU$ yields a perfect clustering at step $t=n-K+1$ with high probability.
\label{th:perfect_hc}
\end{theorem}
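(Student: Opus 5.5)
The plan is to carry over the row-wise separation argument from Theorem~\ref{th:perfect_U} and then show that complete-linkage hierarchical clustering cannot help recovering the true partition when the data have this separation. The first step is to obtain, with high probability, an orthogonal $W\in\calO_K$ such that
\[
\bigl\|\hatU - U W\bigr\|_{2,\infty} \le \delta_n, \qquad \delta_n = o\bigl(\sqrt{K/n}\bigr).
\]
This is exactly the intermediate bound that the proof of Theorem~\ref{th:perfect_U} must establish. We decompose $\hatU^{(1,2)} - U^{(1,2)}(W_U^{(1)}\otimes W_U^{(2)})$ via the identity $\hatU^{(1)}\tkr \hatU^{(2)} - A\tkr B = (\hatU^{(1)}-A)\tkr \hatU^{(2)} + A\tkr(\hatU^{(2)}-B)$, where $A=U^{(1)}W_U^{(1)}$ and $B=U^{(2)}W_U^{(2)}$. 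Row norms of a transposed Khatri--Rao product factor as products of row norms, so Assumption~\ref{assump:incoherentUhats}, condition \eqref{eq:perf_cl_cond_th3}, and the bound $\|U^{(v)}\|_{2,\infty}\le C\sqrt{K_v/n}$ from Assumption~\ref{assump:balancedclusters} give a $2\to\infty$ perturbation of order $o(n^{-1/2}\cdot\sqrt{K_1K_2}/\sqrt{n})$ for the Khatri--Rao matrix.

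The nonzero singular values of $U^{(1,2)}$ are the diagonal entries of $D^{1/2}\tilD^{-1/2}$ from \eqref{eq:U12_SVD}. Under Assumption~\ref{assump:balancedclusters} each of these is of order $\sqrt{n/K}\cdot\sqrt{K_1K_2}/n$. We then pass from the perturbation of $\hatU^{(1,2)}$ to a $2\to\infty$ perturbation of $\hatU=\SVD_K(\hatU^{(1,2)})$. We write $\hatU = \hatU^{(1,2)}\hatV\hatSigma^{-1}$ and compare it with $U=U^{(1,2)}V\Sigma^{-1}$, using Davis--Kahan (with the spectral bounds $\calE^{(v)}_{0,n}=o(1)$) to control the mismatch between $\hatV$ and $V$ and the singular values. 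This yields $\delta_n = o(\sqrt{K/n})$. If Theorem~\ref{th:perfect_U} is proved through this bound, we simply cite it.

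Next comes the geometry. The rows of $UW$ take exactly $K$ distinct values $n_k^{-1/2}e_k^\top W$. Two distinct centers are at distance $\sqrt{n_k^{-1}+n_l^{-1}}\ge \sqrt{2K}/(C_o\sqrt{n})$. Hence, with high probability and for large $n$:
\begin{itemize}
\item any two rows of $\hatU$ in the same true cluster are within $2\delta_n$;
\item any two rows in different clusters are at distance at least $\sqrt{2K}/(C_o\sqrt n)-2\delta_n$;
\item the second quantity exceeds the first by a factor tending to infinity.
\end{itemize}

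Finally we analyze complete-linkage HC (Algorithm~\ref{alg:HC}). We show by induction on the merge step $t\le n-K+1$ that every current cluster is contained in a single true cluster. Suppose this holds before a merge and fewer than $n-K+1$ steps have been done, so there are more than $K$ clusters. Then by pigeonhole two current clusters lie in the same true cluster, and their complete-linkage distance is at most $2\delta_n$. Any pair of current clusters drawn from different true clusters has complete-linkage distance at least the minimal between-cluster distance, which is larger. So the minimizing merge joins two pieces of the same true cluster, and purity is preserved. After $n-K$ merges (step $t=n-K+1$) there are exactly $K$ pure clusters, which must coincide with the $K$ true clusters. Ties do not arise because the gap is strict.

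The main obstacle is the first step, the $2\to\infty$ control of $\hatU$ rather than of $\hatU^{(1,2)}$. The factor $\hatSigma^{-1}$ and the rotation mismatch must be handled so that the spectral error contributes only $o(\sqrt{K/n})$ per row. Here I would use the incoherence of $U^{(1,2)}$, which gives the bound $\|U^{(1,2)}(V-\hatV W')\|_{2,\infty}\le \|U^{(1,2)}\|_{2,\infty}\,\|V-\hatV W'\|$, together with $\calE_{0,n}^{(v)}=o(1)$.
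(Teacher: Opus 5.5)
Your proposal is correct and matches the paper's proof. The paper also cites the row-wise bound $\|\hatU - UW_U\|_{2,\infty} \le \eta\, s$ from the proof of Theorem~\ref{th:perfect_U}, which comes from the same perturbation decomposition you sketch; it then compares within-cluster distances, at most $2\|\hatU-UW_U\|_{2,\infty}$, against between-cluster distances, at least $s-2\|\hatU-UW_U\|_{2,\infty}$ with $s\ge \sqrt{2K}/(C_o\sqrt{n})$, to conclude that every merge up to step $n-K+1$ stays inside a true cluster. Your pigeonhole induction makes rigorous a step the paper only asserts (``all of the initial merges are within-cluster'').
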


\begin{algorithm}
\caption{Complete-Linkage Hierarchical Clustering}
\label{alg:HC}
\begin{flushleft}  
{\bf Input:} $\hatU(i,:) \in \RR^K$, $i \in [n]$. \\
{\bf Output:} Clustering partitions $\calG^{(t)}$ and merge heights $h(t)$ for $t = 1, \ldots, n$.
\\
{\bf Steps:}\\
{\bf 1:} Initialize $\calG_{1}^{(0)} = \{\{1\}, \{2\}, \ldots, \{n\}\}$\;
{\bf 2:}  \For{$t = 2$ to $n$}{
    Find the pair of clusters $(G_i^{(t-1)}, G_j^{(t-1)})$ that minimizes the quantity:
    $$d (G_i^{(t-1)},\, G_j^{(t-1)}) = \max \{ \| \hatU(a,:) - \hatU(b,:) \| : a \in G_i^{(t-1)},\, b \in G_j^{(t-1)} \}$$

    Merge $G_{i}^{(t-1)}$ and $G_{j}^{(t-1)}$ to form $G_{new}^{(t)} = G_{i}^{(t-1)} \cup G_{j}^{(t-1)}$\;
    Set $h(t)=\max\{\|\hat{U}(a,:)-\hat{U}(b,:)\|:a,b\in G_{new}^{(t)}\}.$\;
    Update $\calG^{(t)} = \left( \calG^{(t-1)} \setminus \{G_{i}^{(t-1)}, G_{j}^{(t-1)}\} \right) \cup \{G_{new}^{(t)}\}$\;
}
\end{flushleft} 
\end{algorithm}


\subsection{Estimating the Number of Joint Clusters}
\label{sec:num_clust_est}

As we have discussed earlier, one of the advantages of the KRAFTY
algorithm is that it enables a more accurate estimation of the number of joint clusters, especially  when $K > K_1 + K_2$. 
 The following statements confirm that the singular values of matrices 
 $\hatZ^{(1,2)}$ and $\hatU^{(1,2)}$, defined in \eqref{eq:hatU_from_hatZ} and \eqref{eq:hatU_from_hatU} respectively,  exhibit elbows at the $K$-th singular values.


\begin{theorem}[Consistent estimation of the number of joint clusters on the basis of $\hatZ^{(v)}$] Let Assumptions \ref{assump:balancedclusters}, \ref{assump:finiteclusternumbers}, and \ref{assump:consistentZs} hold, and let $\calE_{v,n}$ be as defined in \eqref{eq:calEvn}. Then
\begin{align}  \label{eq:cl_num_Z1}
    \sigma_K(\hatZ^{(1,2)}) - \sigma_{K+1}(\hatZ^{(1,2)}) & \geq \frac{c_o\sqrt{n}}{\sqrt{K}} \left( 1-\frac{2\sqrt{2}}{c_o} 
    \sqrt{\frac{K(\calE_{1,n} + \calE_{2,n}) } {n}}
     \right), \\
    \sigma_{K+j}(\hatZ^{(1,2)}) - \sigma_{K+j+1}(\hatZ^{(1,2)}) & \leq \frac{c_o\sqrt{n}}{\sqrt{K}} \, \frac{2\sqrt{2}}{c_o} \,
    \sqrt{\frac{K(\calE_{1,n} + \calE_{2,n}) } {n}}, \quad j \geq 1.
    \label{eq:cl_num_Z2}
\end{align}
Since $n^{-1}\,(\calE_{1,n} + \calE_{2,n})=o(1)$ as $n \to \infty$, the singular values display  an elbow at $K$ when $n$ is large enough. 
\label{th:cl_number_Z}
\end{theorem}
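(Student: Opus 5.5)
We treat $\hatZ^{(1,2)}$ as a perturbation of $Z^{(1,2)}$ and combine Weyl's inequality with a Frobenius bound on the perturbation. By \eqref{eq:joint_U}, $Z^{(1,2)} = U D^{1/2} H^\top$ is an SVD. Hence $Z^{(1,2)}$ has exactly $K$ nonzero singular values, equal to $\sqrt{n_k}$, $k\in[K]$, and $\sigma_{K+j}(Z^{(1,2)})=0$ for $j\ge 1$. Assumption~\ref{assump:balancedclusters}, \eqref{eq:balanced_joint}, then gives $\sigma_K(Z^{(1,2)}) = \min_k \sqrt{n_k} \ge c_o\sqrt{n/K}$. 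Throughout, the estimated views are aligned with the true ones by the label permutations implicit in the definition of $\calE_{v,n}$. Such an alignment amounts to right-multiplying $\hatZ^{(1)}\tkr\hatZ^{(2)}$ by a permutation matrix (a Kronecker product of permutations, by property {\bf P1}), which leaves singular values unchanged.

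The key step is to bound $\Delta := \hatZ^{(1,2)} - Z^{(1,2)}$. Row $i$ of $\hatZ^{(1,2)}$ is $\hatZ^{(1)}(i,:)\otimes \hatZ^{(2)}(i,:)$, a single standard basis vector $e_{(\hat k_1,\hat k_2)}$ in $\RR^{K_1K_2}$. Therefore $\Delta(i,:)\neq 0$ only if $i\in J_1\cup J_2$. In that case $\Delta(i,:)$ is a difference of two distinct standard basis vectors, so $\|\Delta(i,:)\|=\sqrt2$. It follows that
$$
\|\Delta\|\le\|\Delta\|_F \le \sqrt{2\,|J_1\cup J_2|} \le \sqrt{2(\calE_{1,n}+\calE_{2,n})}.
$$

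Weyl's inequality gives $|\sigma_j(\hatZ^{(1,2)})-\sigma_j(Z^{(1,2)})|\le \|\Delta\|$ for all $j$. For \eqref{eq:cl_num_Z1}, we write
$$
\sigma_K(\hatZ^{(1,2)})-\sigma_{K+1}(\hatZ^{(1,2)}) \ge \sigma_K(Z^{(1,2)}) - 2\|\Delta\| \ge c_o\sqrt{n/K} - 2\sqrt2\sqrt{\calE_{1,n}+\calE_{2,n}},
$$
and factoring out $c_o\sqrt{n}/\sqrt{K}$ gives exactly the stated expression. For \eqref{eq:cl_num_Z2}, with $j\ge1$ both $\sigma_{K+j}(\hatZ^{(1,2)})$ and $\sigma_{K+j+1}(\hatZ^{(1,2)})$ lie in $[0,\|\Delta\|]$. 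Their difference is therefore at most $\|\Delta\|\le\sqrt2\sqrt{\calE_{1,n}+\calE_{2,n}}$, which is bounded by the right-hand side $2\sqrt2\sqrt{\calE_{1,n}+\calE_{2,n}}$, with room to spare. The elbow conclusion then follows from Assumption~\ref{assump:consistentZs} and Assumption~\ref{assump:finiteclusternumbers}, since $K(\calE_{1,n}+\calE_{2,n})/n\to0$.

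The main obstacle is the alignment step: making precise that the misclassification sets $J_v$ are defined relative to optimal label permutations $\pi_v$, and that replacing $\hatZ^{(v)}$ by $\hatZ^{(v)}\Pi_v$ changes $\hatZ^{(1,2)}$ only by right multiplication with $\Pi_1\otimes\Pi_2$, which is orthogonal. Once this is in place, the remaining steps are a routine perturbation argument.
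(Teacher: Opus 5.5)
Your proof is correct and follows essentially the same route as the paper. Both arguments apply Weyl's inequality to $\hatZ^{(1,2)} = Z^{(1,2)} + \Delta$, using $\sigma_K(Z^{(1,2)}) \geq c_o\sqrt{n/K}$, $\sigma_{K+1}(Z^{(1,2)}) = 0$, and the row-wise bound $\|\Delta\|_F^2 = 2\,|J_1\cup J_2| \leq 2(\calE_{1,n}+\calE_{2,n})$; your explicit label alignment via right multiplication by $\Pi_1\otimes\Pi_2$ (property \textbf{P1}) is a detail the paper leaves implicit.
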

%

\begin{theorem}[Consistent estimation of the number of joint clusters on the basis of $\hatU^{(v)}$]
 Let Assumptions \ref{assump:balancedclusters}, \ref{assump:finiteclusternumbers}, \ref{assump:consistentUs}, and \ref{assump:incoherentUhats} hold and $\calE_{o,n}^{(v)}$ in \eqref{eq:Eopvn} be such that $\calE_{o,n}^{(v)} = o(1)$ 
 as $n \to \infty$, $v=1,2$. 
 Then 
\begin{align}  \label{eq:cl_num_U1}
    \sigma_K(\hatU^{(1,2)}) - \sigma_{K+1}(\hatU^{(1,2)}) & \geq \frac{c_D\sqrt{K}}{\sqrt{K_1 K_2\, n}}\, \left[ 1 - \frac{2\, C_{\tau} \, K_1 K_2 }{c_D\, \sqrt{K} } (\calE_{o,n}^{(1)}+\calE_{o,n}^{(2)})  \right],\\
    \sigma_{K+j}(\hatU^{(1,2)}) - \sigma_{K+j+1}(\hatU^{(1,2)}) & \leq \frac{c_D\sqrt{K}}{\sqrt{K_1 K_2\, n}}\,  
    \left[ \frac{2\, C_{\tau} \, K_1 K_2 }{c_D\, \sqrt{K} } \,(\calE_{o,n}^{(1)}+\calE_{o,n}^{(2)})  \right], \quad j \geq 1.
    \label{eq:cl_num_U2}
\end{align}
Since $\mathcal{E}_{o,n}^{(1)}+\mathcal{E}_{o,n}^{(2)} = o(1)$ as $n \to \infty$,  the singular values display an elbow at $K$ when $n$ is large enough.    
\label{th:cl_number_U}
\end{theorem}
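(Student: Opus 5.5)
The strategy is a Weyl-inequality perturbation argument applied to the noiseless identity \eqref{eq:U12_SVD}. By \eqref{eq:U12_SVD}, $U^{(1,2)} = U\, D^{1/2}\tilD^{-1/2} H^\top$ is an SVD with exactly $K$ nonzero singular values. These are the diagonal entries $\sqrt{n_k/(n^{(1)}_{k_1} n^{(2)}_{k_2})}$, and all others vanish.

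\textbf{Step 1: lower bound on the true spectrum.} By Assumption~\ref{assump:balancedclusters}, $n_k \geq c_o^2 n/K$ and $n^{(1)}_{k_1} n^{(2)}_{k_2} \leq (C_o^{(1)}C_o^{(2)})^2 n^2/(K_1K_2)$. Hence
\begin{equation*}
\sigma_K(U^{(1,2)}) \geq \frac{c_D\sqrt{K}}{\sqrt{K_1K_2\, n}}\cdot \frac{K_1K_2}{K}\sqrt{\frac{1}{K_1K_2}}\,\cdot(\ldots)
\end{equation*}
The exact constant bookkeeping is routine. I will define $c_D$ so that $\sigma_K(U^{(1,2)}) \geq c_D\sqrt{K}/\sqrt{K_1K_2 n}$, absorbing the $c_o, C_o^{(v)}$ and the $K$-dependent factors. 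Meanwhile $\sigma_{K+j}(U^{(1,2)})=0$ for $j\geq 1$.

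\textbf{Step 2: perturbation bound.} Since Frobenius distances of SVD bases are rotation invariant, I compare $\hatU^{(1,2)}$ with $(U^{(1)}W_U^{(1)})\tkr(U^{(2)}W_U^{(2)}) = U^{(1,2)}(W_U^{(1)}\otimes W_U^{(2)})$, using property \textbf{P1}. The right factor is orthogonal, so this matrix has the same singular values as $U^{(1,2)}$. Next I decompose
\begin{equation*}
\hatU^{(1)}\tkr\hatU^{(2)} - \tilU^{(1)}\tkr\tilU^{(2)} = (\hatU^{(1)}-\tilU^{(1)})\tkr\hatU^{(2)} + \tilU^{(1)}\tkr(\hatU^{(2)}-\tilU^{(2)}),
\end{equation*}
with $\tilU^{(v)}=U^{(v)}W_U^{(v)}$. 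The key inequality is $\|A\tkr B\|\leq \|A\|\,\|B\|_{2,\infty}$. It holds because $\|(A\tkr B)x\|^2 = \sum_i (A(i,:) X B(i,:)^\top)^2$ with $X$ the reshaped $x$, and this sum is at most $\|B\|_{2,\infty}^2\sum_i\|A(i,:)X\|^2\leq \|B\|^2_{2,\infty}\|A\|^2\|X\|_F^2$.

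I apply this to each term. The first term uses Assumption~\ref{assump:incoherentUhats}: $\|\hatU^{(2)}\|_{2,\infty}\leq C_\tau\sqrt{K_2/n}$ (or the symmetric version if incoherence is assumed for view 1, swapping roles). The second term uses $\|\tilU^{(1)}\|_{2,\infty}=\|U^{(1)}\|_{2,\infty}\leq 1/\min_k\sqrt{n^{(1)}_k}\leq C\sqrt{K_1/n}$ by Assumption~\ref{assump:balancedclusters}. Combined with \eqref{eq:Eopvn}, with probability at least $1-2n^{-\tau}$,
\begin{equation*}
\|\hatU^{(1,2)} - U^{(1,2)}(W_U^{(1)}\otimes W_U^{(2)})\| \leq \frac{C_\tau\sqrt{K_1K_2}}{\sqrt n}\,(\calE_{o,n}^{(1)}+\calE_{o,n}^{(2)}),
\end{equation*}
after enlarging $C_\tau$ and bounding $\sqrt{K_v}\leq\sqrt{K_1K_2}$.

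\textbf{Step 3: Weyl.} Weyl's inequality gives $|\sigma_j(\hatU^{(1,2)})-\sigma_j(U^{(1,2)})|\leq \Delta$ for every $j$, where $\Delta$ is the bound above. Then $\sigma_K(\hatU^{(1,2)})-\sigma_{K+1}(\hatU^{(1,2)})\geq \sigma_K(U^{(1,2)})-2\Delta$, which yields \eqref{eq:cl_num_U1} after rewriting $2\Delta$ as $\frac{c_D\sqrt K}{\sqrt{K_1K_2 n}}\cdot\frac{2C_\tau K_1K_2}{c_D\sqrt K}(\calE_{o,n}^{(1)}+\calE_{o,n}^{(2)})$. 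For $j\geq1$, both $\sigma_{K+j}$ and $\sigma_{K+j+1}$ lie in $[0,\Delta]$, so their difference is at most $\Delta\leq 2\Delta$, which gives \eqref{eq:cl_num_U2}. Since $\calE_{o,n}^{(v)}=o(1)$, the elbow follows.

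\textbf{Main obstacle.} I expect the hardest part to be Step 2. A naive bound $\|A\tkr B\|\leq\|A\|\|B\|$ loses the crucial $n^{-1/2}$ scaling, which is why the row-wise incoherence of Assumption~\ref{assump:incoherentUhats} is needed on one factor. The decomposition must be chosen so that the incoherent estimated matrix, or the exactly incoherent true $U^{(v)}$, always multiplies the spectral-norm error term.
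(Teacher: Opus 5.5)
Your proposal is correct and is essentially the paper's proof. It uses the same reference matrix $U^{(1,2)}(W_U^{(1)}\otimes W_U^{(2)})$, whose $K$ nonzero singular values are bounded below through Assumption~\ref{assump:balancedclusters}, the same splitting $\Xi = \Xi^{(1)}\tkr\hatU^{(2)} + (U^{(1)}W_U^{(1)})\tkr\Xi^{(2)}$ with an incoherent factor paired against each error term, and then Weyl's inequality.

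The only real difference is how you bound $\|\Xi\|$. You do it directly via $\|A\tkr B\|\le\|A\|\,\|B\|_{2,\infty}$, together with its mirror $\|A\tkr B\|\le\|A\|_{2,\infty}\|B\|$, which is what the second term actually needs. The paper instead passes through $\|\Xi\|\le\|\Xi\|_F$ and property \textbf{P2}, after converting the spectral errors $\|\Xi^{(v)}\|$ to Frobenius norms using their low rank. Your route is slightly cleaner and sharper.

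Your Step~1 computation should simply read $\sigma_K(U^{(1,2)})\ge \frac{c_o}{C_o^{(1)}C_o^{(2)}}\sqrt{K_1K_2/(Kn)}$. This already dominates $c_D\sqrt{K}/\sqrt{K_1K_2 n}$ because $K\le K_1K_2$, so nothing needs to be absorbed into $c_D$.
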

%

A similar result holds for the case of hierarchical clustering with complete linkage, if we base our estimate of the number of clusters on an elbow in the merge heights $h(t)$.

\begin{theorem}[Consistent estimation of the number of joint clusters with HC]
\label{th:cluster_number_hc}
Let the conditions of Theorem~\ref{th:perfect_hc} hold, and consider clustering using Algorithm~\ref{alg:HC} applied to the rows of $\hatU^{(1,2)}$. When $n$ is large enough, with high probability there is an elbow in the merge heights at step $n-K+2$, in the following sense: For any fixed $\eta\in(0,1/6)$, once $n$ is large enough, with high probability we have
\begin{align*}
    h(n-K+2)-h(n-K+1)&\geq (1-4\eta)\frac{\sqrt{2K_1K_2}}{c_o^{(1)}c_o^{(2)}n},\\
    h(k+1)-h(k)&\leq 2\eta \frac{\sqrt{2K_1 K_2}}{c_o^{(1)}c_o^{(2)}n},\quad k\leq n-K.
\end{align*}
\end{theorem}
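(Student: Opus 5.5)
The plan is to show that, with high probability, the rows of $\hatU^{(1,2)}$ (after a fixed orthogonal rotation) fall into $K$ tight groups. Each group should have diameter at most $2\eta\Delta$ and any two groups should be at distance at least $(1-2\eta)\Delta$, where $\Delta := \sqrt{2K_1K_2}/(c_o^{(1)}c_o^{(2)} n)$ is the separation constant in the statement. Given this, complete-linkage HC (Algorithm~\ref{alg:HC}) is essentially deterministic, and both inequalities follow by bookkeeping on the merge heights.

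\textbf{Step 1 (population geometry).} Since $U^{(v)} = Z^{(v)}(D^{(v)})^{-1/2}$, for $i$ in the joint cluster indexed by $(k_1,k_2)$ the KR structure gives
$$U^{(1,2)}(i,:) = \bigl(n^{(1)}_{k_1} n^{(2)}_{k_2}\bigr)^{-1/2}\, e_{k_1}\otimes e_{k_2}.$$
Rows in the same joint cluster therefore coincide. Rows in different joint clusters are orthogonal, so their distance is $\bigl(1/(n^{(1)}_{k_1}n^{(2)}_{k_2}) + 1/(n^{(1)}_{k_1'}n^{(2)}_{k_2'})\bigr)^{1/2}$. This is bounded below via the balance condition \eqref{eq:balanced_view}, giving a separation of order $\sqrt{2K_1K_2}/n$. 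I will take this as $\Delta$ with the constants of \eqref{eq:balanced_view} as they appear in the statement; pinning down exactly which balance constant enters is a bookkeeping point I would check carefully.

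\textbf{Step 2 (row-wise perturbation).} Set $W = W_U^{(1)}\otimes W_U^{(2)}$, which is orthogonal. By property P1 of the KR product, $(U^{(1)}W_U^{(1)})\tkr(U^{(2)}W_U^{(2)}) = U^{(1,2)}W$. For each row, using $\|a\otimes b\| = \|a\|\|b\|$,
$$\|a\otimes b - a'\otimes b'\| \le \|a-a'\|\,\|b\| + \|a'\|\,\|b-b'\|.$$
Apply this with $a=\hatU^{(1)}(i,:)$ and $b=\hatU^{(2)}(i,:)$. Bound $\|\hatU^{(v)}(i,:)\|$ by Assumption~\ref{assump:incoherentUhats}, bound $\|U^{(v)}(i,:)\|\le (c_o^{(v)})^{-1}\sqrt{K_v/n}$ by \eqref{eq:balanced_view}, and bound the row differences by \eqref{eq:E2infvn}. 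This gives, with probability at least $1-Cn^{-\tau}$,
$$\|\hatU^{(1,2)} - U^{(1,2)}W\|_{2,\infty} \le C_\tau\Bigl(\calE^{(1)}_{2,\infty,n}\sqrt{K_2/n} + \calE^{(2)}_{2,\infty,n}\sqrt{K_1/n}\Bigr) = \frac{\sqrt{K_1K_2}}{n}\, o(1),$$
where the last equality uses condition \eqref{eq:perf_cl_cond_th3}. Hence for $n$ large every row lies within $\eta\Delta$ of its (rotated) population center. Because distances are rotation invariant, within-cluster distances are at most $2\eta\Delta$ and cross-cluster distances are at least $(1-2\eta)\Delta$. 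I expect this step to be the main obstacle: it needs the incoherence of only one $\hatU^{(v)}$ to be paired correctly with the $2\to\infty$ error of the other view, and the Kronecker rotation must be handled cleanly.

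\textbf{Step 3 (HC bookkeeping).} After step $t$ the partition $\calG^{(t)}$ has $n-t+1$ groups. While any group still contains points of two different joint clusters, I claim every merge joins points from the same joint cluster. I would prove this by induction on $t$. Suppose no mixed group exists yet. Then any pair of groups from the same joint cluster has complete-linkage distance at most $2\eta\Delta$. Any pair from different joint clusters has distance at least $(1-2\eta)\Delta$, which exceeds $2\eta\Delta$ because $\eta<1/4$ (and indeed $\eta<1/6$). So the minimizing pair is a same-cluster pair whenever one exists, and one exists until exactly $K$ groups remain, i.e.\ through step $t=n-K+1$. Consequently:
\begin{itemize}
\item For every $k\le n-K+1$, the merge height satisfies $h(k)\le 2\eta\Delta$.
\item Since $h\ge 0$, for every $k\le n-K$ we get $h(k+1)-h(k)\le h(k+1)\le 2\eta\Delta$.
\item At step $n-K+2$ two distinct joint clusters must merge, so $h(n-K+2)\ge(1-2\eta)\Delta$.
\end{itemize}
Combining the first and third points gives $h(n-K+2)-h(n-K+1)\ge (1-4\eta)\Delta$. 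The failure probabilities come only from Assumptions~\ref{assump:consistentUs} and \ref{assump:incoherentUhats}, so the conclusion holds with high probability.
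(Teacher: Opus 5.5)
Your argument is the paper's argument. The paper also lower-bounds the separation $s$ of the rows of $U^{(1,2)}$. It then uses the bound $\|\Xi\|_{2,\infty}\le \Cksmall\sqrt{K_1K_2}/n$, which it derives in the proof of Theorem~\ref{th:perfect_U} exactly as in your Step~2: from $\Xi(i,:)=(U^{(1)}(i,:)W_U^{(1)})\otimes\Xi^{(2)}(i,:)+\Xi^{(1)}(i,:)\otimes\hatU^{(2)}(i,:)$, with incoherence of one view and balance of the other. Finally it does the same complete-linkage bookkeeping. Your pigeonhole induction in Step~3 is more careful than the paper's one-line version.

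The constant you deferred in Step~1 does not check out, and the paper's display \eqref{eq:u12_s_lb} makes the same slip. The squared row norms are $1/(n^{(1)}_{k_1}n^{(2)}_{k_2})$, so a \emph{lower} bound on $s$ needs the \emph{upper} balance constants. Thus \eqref{eq:balanced_view} gives $s\ge\Delta_C:=\sqrt{2K_1K_2}/(C_o^{(1)}C_o^{(2)}n)$, while $c_o^{(1)}c_o^{(2)}$ only yields an upper bound on each row norm.

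This matters. Take $K_1=K_2=2$, view-one sizes $n/4,3n/4$, view-two sizes $n/2,n/2$, and $K=4$ joint cells of sizes $n/8,n/8,3n/8,3n/8$. One may take $c_o^{(1)}=1/\sqrt2$ and $c_o^{(2)}=1$, giving $\Delta=4/n$. But the two cells inside the large view-one cluster are at distance $\sqrt{16/3}/n\approx 2.31/n$. Since $h(n-K+2)\le s+2\|\Xi\|_{2,\infty}$, the first inequality of the statement fails for $\eta$ below about $0.1$.

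What your argument (like the paper's) does prove comes from running it with $\Delta_C$. Take $n$ large enough that $\|\Xi\|_{2,\infty}\le\eta\Delta_C$. Then $h(k+1)-h(k)\le 2\eta\Delta_C\le 2\eta\Delta$ for $k\le n-K$, and $h(n-K+2)-h(n-K+1)\ge(1-4\eta)\Delta_C>2\eta\Delta_C$ because $\eta<1/6$. So the second inequality and the elbow conclusion stand, but the first inequality holds with $C_o^{(1)}C_o^{(2)}$ in place of $c_o^{(1)}c_o^{(2)}$.
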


Simulation results demonstrating this phenomenon can be found in Appendix Figure~\ref{fig:hc_mh}.

\section{Simulation Results}
\label{sec: Simulation}
\subsection{Simulation Design}
In this section, we evaluate the empirical performance of KRAFTY in joint clustering and cluster number estimation through simulations, comparing it with MASE under two input settings: (1) $\hatU^{(v)}$ matrices from SVD on each view, and (2) $\hatZ^{(v)}$ matrices obtained by applying $k$-means to $\hatU^{(v)}$.

The simulation data are generated from Gaussian mixtures characterized by three parameters: the dimension ($p$), the within-cluster variance ($\sigma^2$), and the number of components ($K$). To obtain the joint cluster labels, we initialize a $K_1 \times K_2$ probability matrix with a number of nonzeroes equal to the true number of joint clusters $K$. After ensuring that there is at least one point within each joint cluster, the remaining points are drawn according to a Multinomial distribution with the specified probability vector. For each dataset $v \in {1, 2}$, we sample $K_v$ distinct cluster centers from a standard multivariate normal distribution $\mathcal{N}(\mathbf{0}, I_p)$. The resulting cluster means are denoted by $\boldsymbol{\mu}_k^{(v)},k\in [K_v]$.

Observations in each dataset are then generated by

$$
\mathbf{Y}_i^{(v)} \sim \mathcal{N}(\boldsymbol{\mu}_{\tau_v(i)}^{(v)}, \sigma^2 I_p),\quad i=1,\dots,n.
$$

To evaluate performance across different conditions, we design three scenarios, each varying one parameter while keeping the others fixed. 

After generating the datasets, we prepare the inputs to KRAFTY and MASE. Both methods require two inputs, data matrices and number of joint clusters, which are obtained as follows. For the matrices input, when using $\hatU^{(v)}$ matrices, we perform SVD on each dataset and extract the top $K_1$ and $K_2$ left singular vectors to form $\hatU^{(v)}$. These two matrices are used as inputs to KRAFTY or MASE. When using $\hatZ^{(v)}$ matrices, we apply $k$-means clustering to $\hatU^{(v)}$ with $k=K_1$ or $k=K_2$, and use the resulting $\hatZ^{(v)}$ matrices as inputs. To emphasize the comparison between KRAFTY and MASE, we take the number of clusters within individual datasets, $K_1$ and $K_2$, as known. In applications, these numbers would also need to be estimated, but one can make use of standard methods for doing this \cite{ZHU2006918}. For the other input, the number of joint clusters, when $K$ is unknown, we use the automatic scree plot selection method of \cite{ZHU2006918}, choosing the second elbow as $\widehat K$ since the first one often corresponds to a single dominant dimension rather than the true cluster structure. The automatic scree plot selection method is used for two main reasons: (1) it offers an objective and reproducible criterion for estimating $K$, and (2) it facilitates fully automated estimation across the large number of experimental replications required in our study. Under known $K$ condition, we use $K$ as the input.

Applying $k$-means clustering to the individual views to obtain $\hatZ^{(v)}$ provides stable results, but is less consistent for clustering the rows of $\hatU$ after applying KRAFTY or MASE, so we use hierarchical clustering with complete linkage to do the final clustering in all experiments. For evaluation metrics, we evaluate joint clustering performance using the Adjusted Rand Index (ARI) \cite{hubert1985ARI}, which compares the clustering results with the ground truth labels. We evaluate the accuracy of estimation for the joint cluster number using absolute error, defined as the difference between the estimated $\widehat K$ and the true $K$. Results are averaged over 100 repetitions. In all figures, points represent mean values, shaded regions indicate empirical 95\% confidence intervals ($\mathrm{mean} \pm 1.96 \mathrm{SE}$), and KRAFTY is abbreviated as “KR” in the legends.

\subsection{Varying Number of Joint Clusters}

In this simulation study, we fix the parameters at $p=20, n=1000, \sigma^2=0.1, K_1=K_2=4$, while varying the true number of joint clusters from 4 to 16, corresponding to the range from $\max(K_1, K_2)$ to $K_1 \times K_2$.

Figure~\ref{fig:K_ZU} compares the joint clustering performance of KRAFTY and MASE using either $\hatU^{(v)}$ or $\hatZ^{(v)}$ matrices under both known and unknown $K$ conditions. In the left panel, where $\hatZ^{(v)}$ matrices are used as inputs, the three curves overlap and lie above that of MASE under unknown $K$ when $K > K_1 + K_2$, showing that KRAFTY’s advantage in this regime comes from correctly estimating $K$. In the right panel, where the $\hatU^{(v)}$ matrices are used as inputs, KRAFTY outperforms MASE when $K > K_1 + K_2$ under both known and unknown $K$ conditions. 

Figure~\ref{fig:K_compareZU_sigma} further compares the joint clustering performance across the four combinations of methods and input types (KRAFTY-$\hatZ^{(v)}$, KRAFTY-$\hatU^{(v)}$, MASE-$\hatZ^{(v)}$, and MASE-$\hatU^{(v)}$) under unknown $K$ setting. The results are shown for three representative values of $K$ (4, 9, and 15), corresponding to cases where $K$ is smaller than, similar to, and greater than $K_1 + K_2$, respectively, and for six noise levels where $\sigma^2 \in \{0.01, 0.05, 0.1, 0.25, 0.5, 1\}$. It shows that across different noise levels and input types, when $K > K_1 + K_2$, KRAFTY exhibits better performance. Moreover, when $K$ is small, KRAFTY with $\hatZ^{(v)}$ matrices performs comparably to MASE with $\hatZ^{(v)}$ and better than the other two combinations, while as $K$ increases, KRAFTY with $\hatU^{(v)}$ matrices achieves better performance. 

In Appendix Figures~\ref{fig:k1k2_ARI_method} and \ref{fig:k1k2_ARI_matrix}, we compare KRAFTY versus MASE in both the known $K$ and unknown $K$ settings, as well as comparing $\hatU^{(v)}$ versus $\hatZ^{(v)}$ matrices as input to these methods, under various growth rates $K=f(K_1,K_2)$.

Next, we examine the performance of estimating the number of joint clusters for both methods in Figure~\ref{fig:K_compareMethod_estK}. The heatmap colors represent the difference in mean absolute error between KRAFTY and MASE, with warm colors indicating that KRAFTY performs better and cool colors indicating that MASE performs better. Both panels (using $\hatU^{(v)}$ or $\hatZ^{(v)}$ as inputs) corroborate that KRAFTY outperforms MASE when $K$ is large. It is also worth noting that when using $\hatZ^{(v)}$ matrices (right panel), even for small $K$, KRAFTY performs comparably to MASE when the noise level is low (here, $\sigma^2 < 0.5$). This finding aligns with our earlier conclusion that when $K$ is small and the noise level is low, KRAFTY with $\hatZ^{(v)}$ matrices is preferable. Appendix Figure~\ref{fig:K_compareZU_estK} further supports this conclusion by directly comparing the use of $\hatZ^{(v)}$ and $\hatU^{(v)}$ matrices. Appendix Figures~\ref{fig:k1k2_K_method}, and ~\ref{fig:k1k2_matrix} extend the analysis to more values of $K_1, K_2$, and growth rates $K=f(K_1,K_2)$, providing further evidence in support of our findings.

\begin{figure}
    \centering
        \begin{minipage}{\linewidth}
            \centering
            \includegraphics[width=0.9\linewidth]{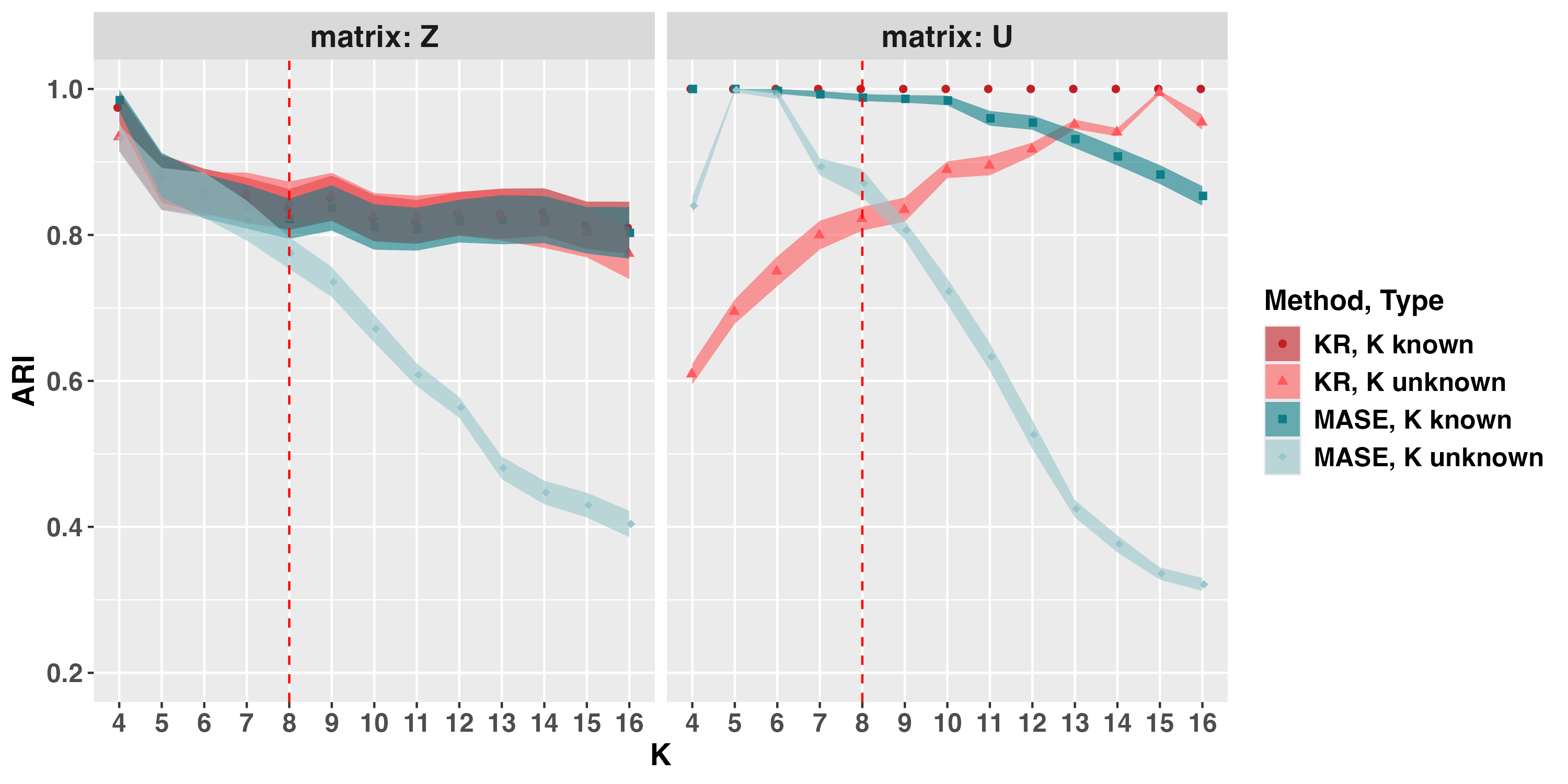}
            \caption{Joint clustering performance of KRAFTY and MASE using $\hatZ^{(v)}$ or $\hatU^{(v)}$ matrices, evaluated under known and unknown $K$. ARI measures agreement between estimated and true joint clusters. $K_1 = K_2 = 4, \sigma^2=0.1, p=20.$}
            \label{fig:K_ZU}
        \end{minipage}  
\end{figure}

\begin{figure}
    \centering
        \begin{minipage}{\linewidth}
            \centering
            \includegraphics[width= .9\linewidth]{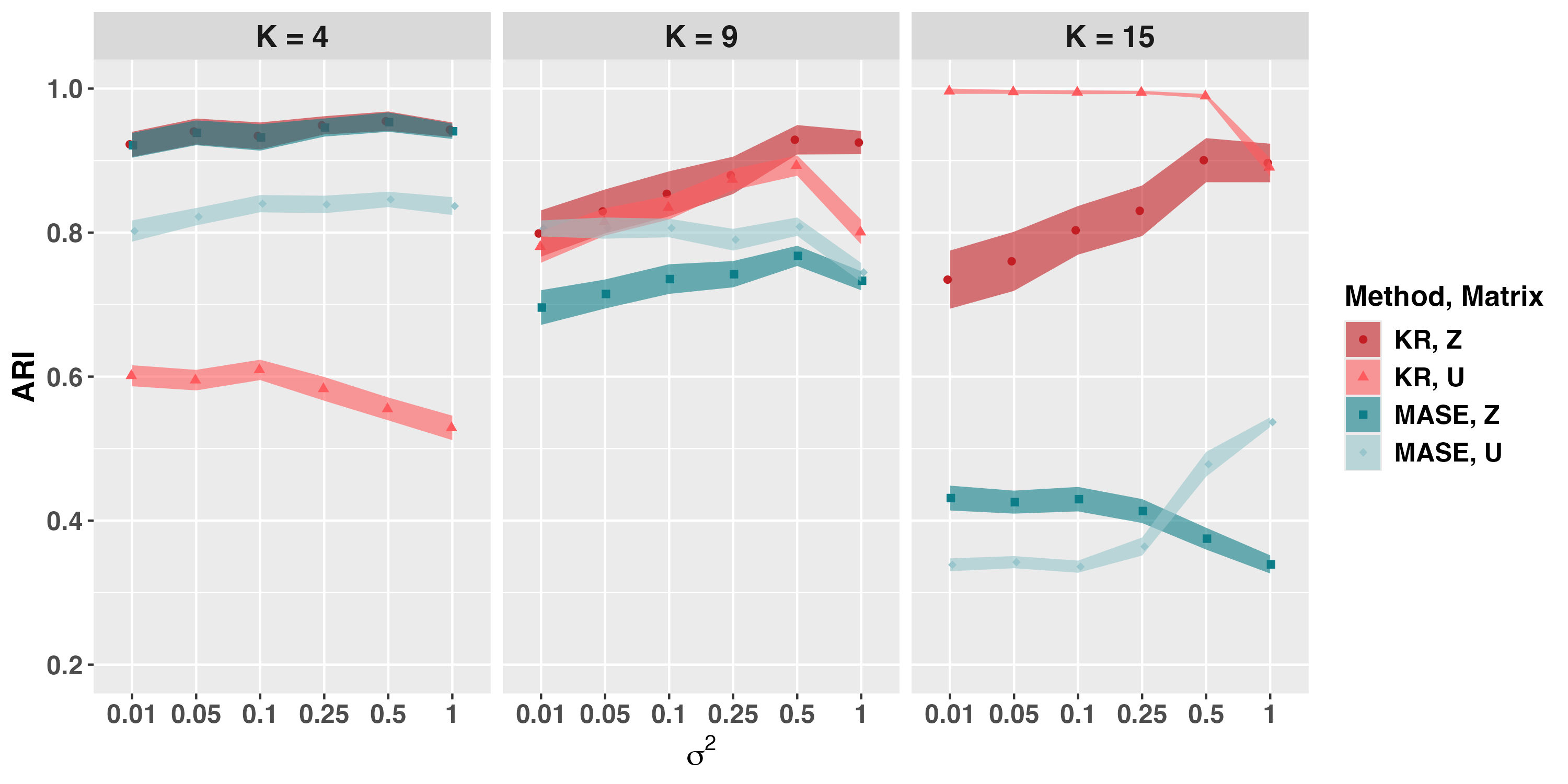}
            \caption{Joint clustering performance of the $\hatZ^{(v)}$ and $\hatU^{(v)}$ matrices using KR and MASE under \textbf{unknown $K$} setting. The true number of joint clusters $K$ is  held constant at 4, 9, and 15 across experiments, with $\sigma^2$ varying. ARI compares estimated joint clusters with ground truth. Here, $K_1 = K_2 = 4, p=20.$}
            \label{fig:K_compareZU_sigma}
        \end{minipage}  
\end{figure}

\begin{figure}
    \centering
        \begin{minipage}{\linewidth}
            \centering
            \includegraphics[width=.9\linewidth]{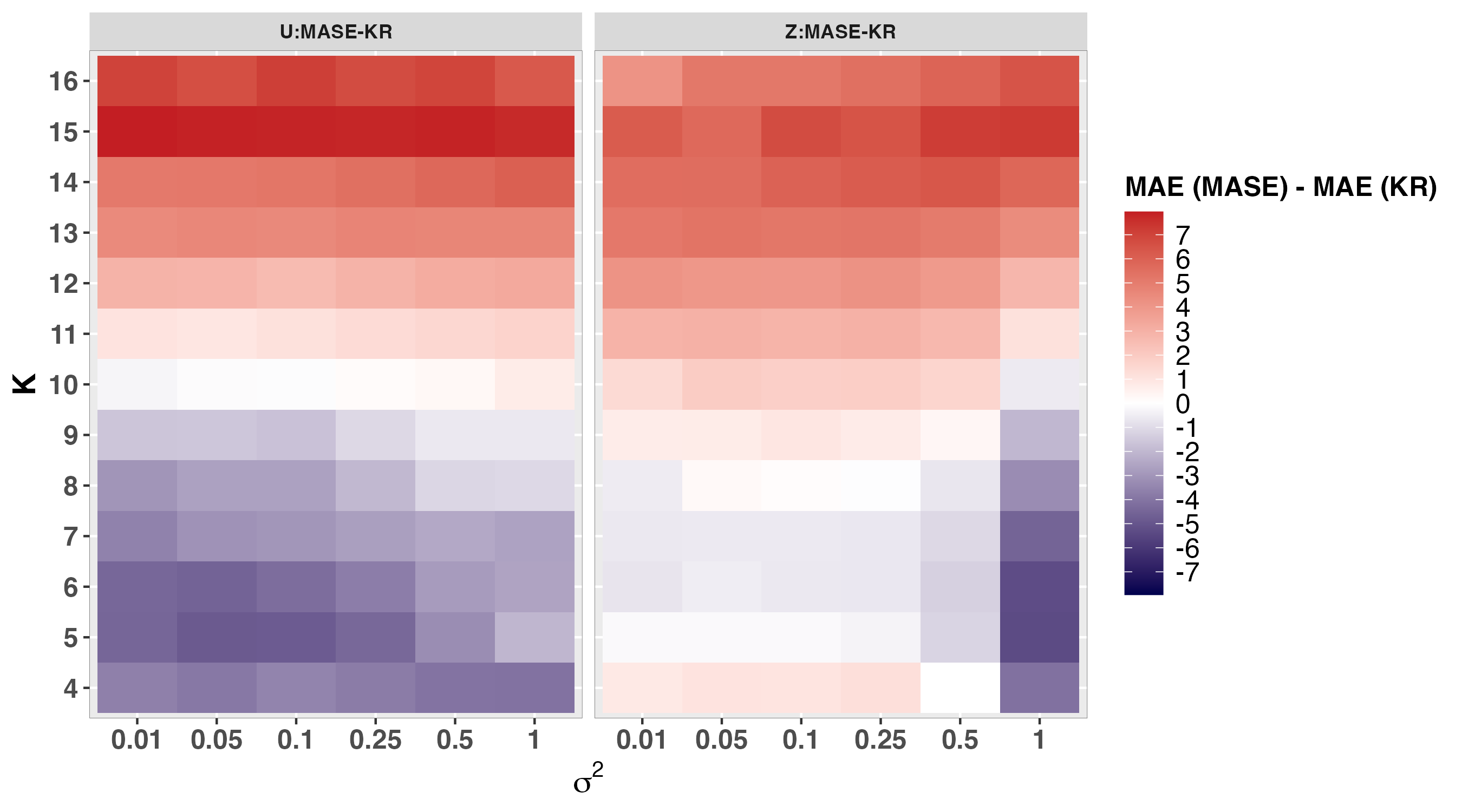}
            \caption{Comparison of joint cluster number estimation performance between KR and MASE across $\hatZ^{(v)}$ and $\hatU^{(v)}$ matrices, when varying the number of true clusters $K$ and noise level $\sigma^2$. Heatmap colors represent the difference in mean absolute error (averaged over 100 repetitions) when estimating $K$ using MASE versus KR. The width of the largest credible interval for the $\hatU^{(v)}$ matrices ($=2(1.96)SE$) is \textcolor{blue}{1.06}, for the $\hatZ^{(v)}$ matrices it is \textcolor{blue}{1.15}. \textbf{Cool colors $\to$ MASE better; warm colors $\to$ KR better.} $K_1 = K_2 = 4, p=20.$}
            \label{fig:K_compareMethod_estK}
        \end{minipage}    
\end{figure}

\subsection{Varying Signal-to-Noise Ratios}
In this simulation, we study how the dimension and noise level affect the performance of KRAFTY. We fix $K_1 = K_2 = 4$, set $K \in \{6, 12\}$, and vary the noise level $\sigma^2 \in \{0.01, 0.25, 1, 5\}$ and the dimension $p \in \{2, 4, 6, 8, 10, 20, 30, 40, 50\}$. Figure~\ref{fig:m_ZU_unknown} presents the results of joint clustering of KRAFTY and MASE when K is unknown. For both methods, performance improves with increasing dimension up to a certain point and then plateaus. This pattern aligns with intuition: as dimensionality increases, clusters become more separable due to the reduced overlap of noisy data, but once they are sufficiently distinct, further increases in dimension provide little additional benefit. 

\begin{figure}
    \centering
        \begin{minipage}{\linewidth}
            \centering
            \includegraphics[width=\linewidth]{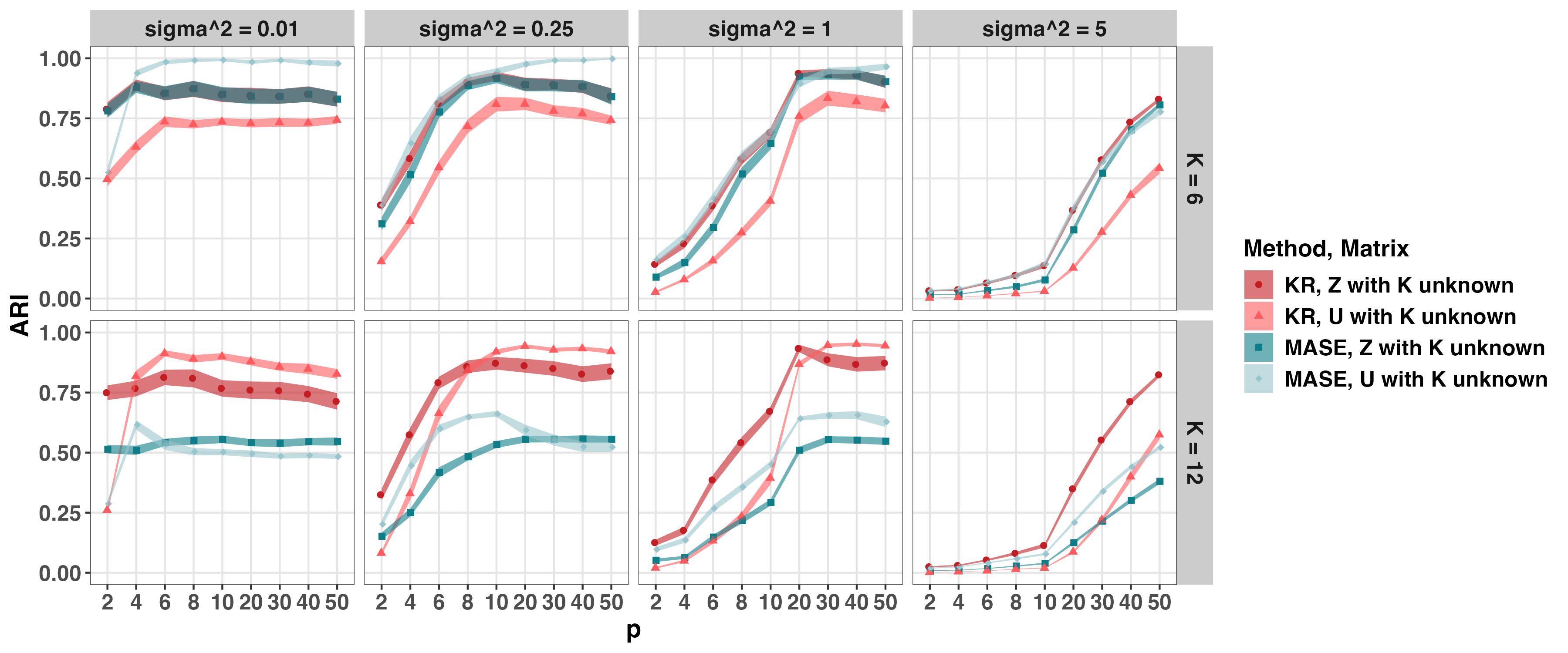}
            \caption{Comparison of joint clustering performance between KR and MASE across the $\hatZ^{(v)}$ and $\hatU^{(v)}$ matrices when the true number of joint clusters ($K$) is \textbf{unknown}, and the dimension $p$ is varied. ARI is computed by comparing the estimated joint clusters with the ground truth. We set $K_1 = K_2 = 4$, the number of clusters in each single graph.}
            \label{fig:m_ZU_unknown}
        \end{minipage}
\end{figure}

In the large-$K$ setting, we see that KRAFTY outperforms MASE across several dimensions and noise levels. In the small-$K$ setting, MASE and KRAFTY with $\hatZ^{(v)}$ perform comparably well, with only very low-noise settings showing a preference for MASE with $\hatU^{(v)}$ matrices as input. For low dimensional data, across values of $K$ and noise level, we see that KRAFTY with $\hatZ^{(v)}$ matrices is an excellent choice, which is perhaps surprising since we expect strong performance from MASE for small values of $K$. A different view of this comparison can be found in Appendix Figure~\ref{fig:m_diffARI_method} (fourth panel in the first row), which further corroborates this result, where the warm-colored lower-right triangle indicates KRAFTY’s better performance under high-noise conditions, in both low and high $K$ settings.

Comparing $\hatZ^{(v)}$ versus $\hatU^{(v)}$ matrices as inputs, we see that under the same noise and dimension settings, KRAFTY with $\hatZ^{(v)}$ matrices consistently outperforms KRAFTY with $\hatU^{(v)}$ when $K$ is small, while for large $K$, KRAFTY with $\hatU^{(v)}$ performs better once the signal-to-noise ratio is sufficiently high, as seen in higher dimensions. This pattern is further supported by Figure~\ref{fig:m_diffARI_ZU_subset}, which directly compares the results of $\hatU^{(v)}$ and $\hatZ^{(v)}$ across all combinations of dimension and noise level. When $K = 6$, the heatmaps are all cool-colored, indicating that KRAFTY with $\hatZ^{(v)}$ performs better. In contrast, when $K = 12$, the upper-left region of the heatmap (corresponding to high dimension relative to noise) is warm-colored, while the lower region is cool-colored, suggesting that KRAFTY with $\hatU^{(v)}$ performs better under high-dimensional, low-noise conditions. A more complete comparison of $\hatU^{(v)}$ versus $\hatZ^{(v)}$ matrices can be found in Appendix Figure~\ref{fig:m_diffARI_ZU}.

\begin{figure}
    \centering
        \begin{minipage}{\linewidth}
            \centering
            \includegraphics[width=0.55\linewidth]{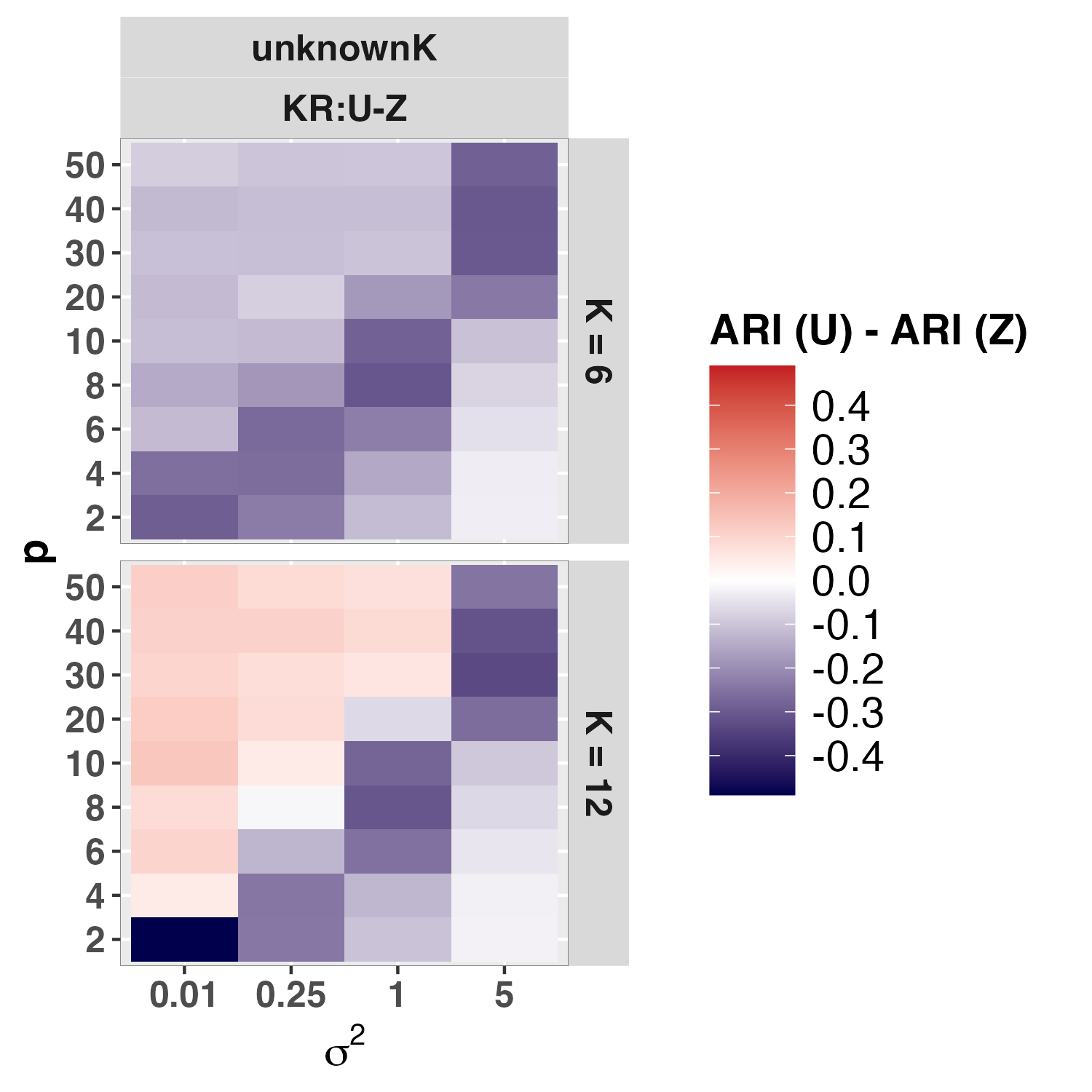}
            \caption{Comparison of joint clustering performance between $\hatZ^{(v)}$ and $\hatU^{(v)}$ for KRAFTY. Heatmap colors represent the difference in mean ARI (averaged over 100 repetitions) when the number of joint clusters $K$ is \textbf{unknown}. \textbf{Cool colors $\to$ Z better; warm colors $\to$ U better.} $K_1=K_2=4$.
            }
            \label{fig:m_diffARI_ZU_subset}
        \end{minipage}
\end{figure}


Figure~\ref{fig:m_K_method} presents the results for estimating the number of joint clusters. The figure is organized such that the left four panels compare $\hatU^{(v)}$ and $\hatZ^{(v)}$ for each method, while the right four panels compare KRAFTY and MASE for each input type. When $K$ is small, MASE performs better, except when using $\hatZ^{(v)}$ under relatively high-dimensional settings, where the difference between KRAFTY and MASE becomes small. In contrast, when $K$ is large, KRAFTY clearly outperforms MASE. Comparing $\hatU^{(v)}$ and $\hatZ^{(v)}$ for KRAFTY, we observe that $\hatZ^{(v)}$ provides more accurate estimates of the number of joint clusters when the dimension is sufficient, while $\hatU^{(v)}$ performs better when the dimension is low and noise level is high. In these settings, $\hatZ^{(v)}$ tends to over-estimate the true number of clusters: this leads to high error on the estimation of the number of joint clusters, but does not lead to a major decrease in clustering performance measured using ARI.

\begin{figure}
    \centering
        \begin{minipage}{\linewidth}
            \centering
            \includegraphics[width=.9\linewidth]{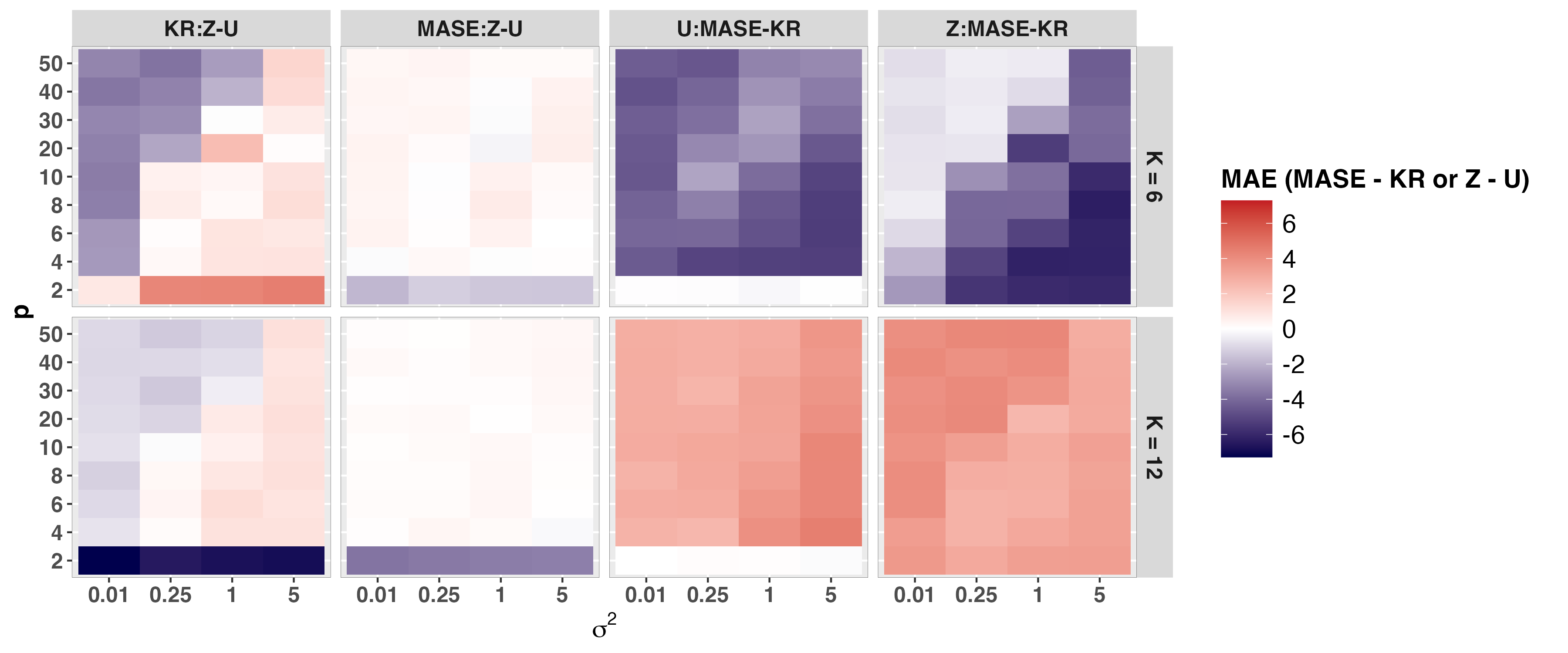}
            \caption{Comparison of joint cluster number estimation performance between KR and MASE across Z and U matrices, as the dimension $p$ and noise level $\sigma^2$ are varied. The left two columns of the heatmap show the difference in mean absolute error (averaged over 100 repetitions) when estimating $K$ using Z versus U. \textbf{Cool colors $\to$ Z better; warm colors $\to$ U better.} The right 2 columns show the difference in mean absolute error between KR and MASE methods. \textbf{Cool colors $\to$ MASE better; warm colors $\to$ KR better.} Here, $K_1 = K_2 = 4$.}
            \label{fig:m_K_method}
        \end{minipage}
\end{figure}

\section{Global Trade Data Analysis}
\label{sec: Real-data}
We use the food and agricultural trade dataset provided by the Food and Agriculture Organization (FAO), which is also used in \cite{JingLi2021}, and is publicly available at http://www.fao.org. The data contains a collection of directed, weighted trade networks, one per product, where nodes are countries and edges point from exporters to importers, weighted by the product’s USD trade value. 

Following previous work \cite{DeDomenico2014, JingLi2021, agterberg2024}, we select data from 2010. We also include data from 2023, the most recent available year, in our analysis. The analysis aims (i) to uncover joint clustering structures that capture countries’ joint memberships across their exporting and importing behaviors in 2010, and (ii) to uncover exporter-specific and importer-specific joint clusterings that generalize across years. We focus on one product (raw chicken meat) and analyze two snapshots of its trade network (2010 and 2023), which share an identical vertex set of 191 countries. For each year $v \in \{2010, 2023\}$, let $A^{(v)}$ denote the observed trade matrix (entries are USD exports of the product from row country to column country). We compute its singular value decomposition $A^{(v)} = U^{(v)} \Sigma^{(v)} {V^{(v)}}^\top$, and retain the top three dimensions for 2010 and the top two for 2023, which are selected by \cite{ZHU2006918}. Then we estimate countries' latent positions as exporters and importers by row-normalizing $U^{(v)} (\Sigma^{(v)})^{1/2}$ and $V^{(v)} (\Sigma^{(v)})^{1/2}$ to unit length. The normalized exporter and importer embeddings are denoted $\hatU^{(v)}$ and $\hat V^{(v)}$, respectively. We then apply $k$-means with $k=3$ and $k=2$ to $\hatU ^ {(v)}$ and $\hat V^{(v)}$ to obtain the $\hatZ^{(v)}: v \in \{(2010,\mathrm{imp}),\ (2010,\mathrm{exp}),\ (2023,\mathrm{imp}),\ (2023,\mathrm{exp})\}$ matrices as our inputs. We detail the findings for the single-year analysis in this section and leave the double-year analysis to the appendix, since they follow the same analysis procedures.

\begin{figure}
    \centering  
    \begin{minipage}[b]{0.48\linewidth} 
        \centering
        \includegraphics[width=\linewidth]{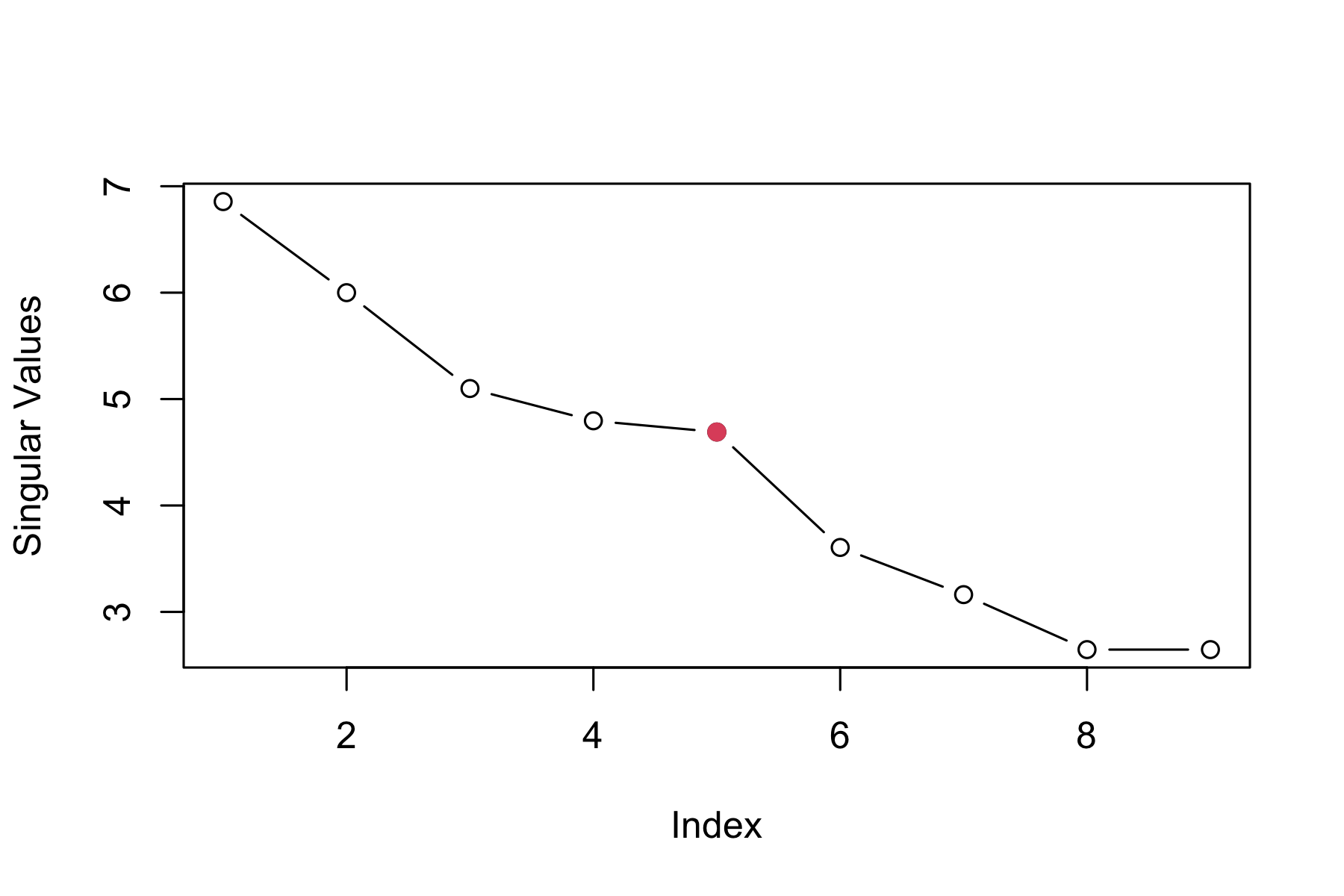}
        \vspace{3pt}
        \centerline{\small (a) KRAFTY} 
        \label{fig:kr_1yr}
    \end{minipage}
    \hfill 
    \begin{minipage}[b]{0.48\linewidth}
        \centering
        \includegraphics[width=\linewidth]{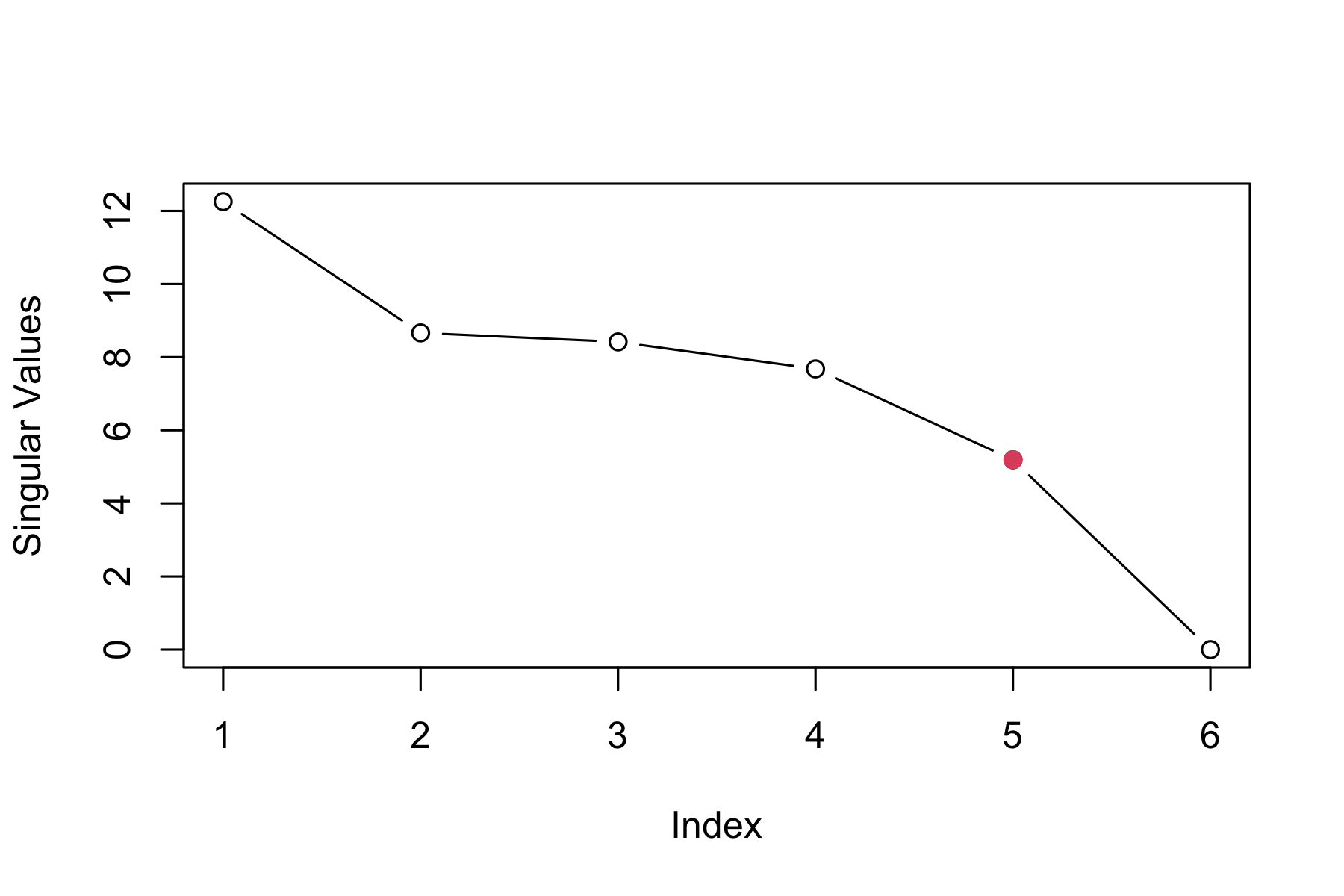}
        \vspace{3pt}
        \centerline{\small (b) MASE} 
        \label{fig:mase_1yr}
    \end{minipage}
    
    \caption{Eigenvalue scree plot for 2010; the red dot indicates the selected dimension.}
    \label{fig:scree_1yr}
\end{figure}

\begin{figure}
    \centering
    \includegraphics[width=\linewidth]{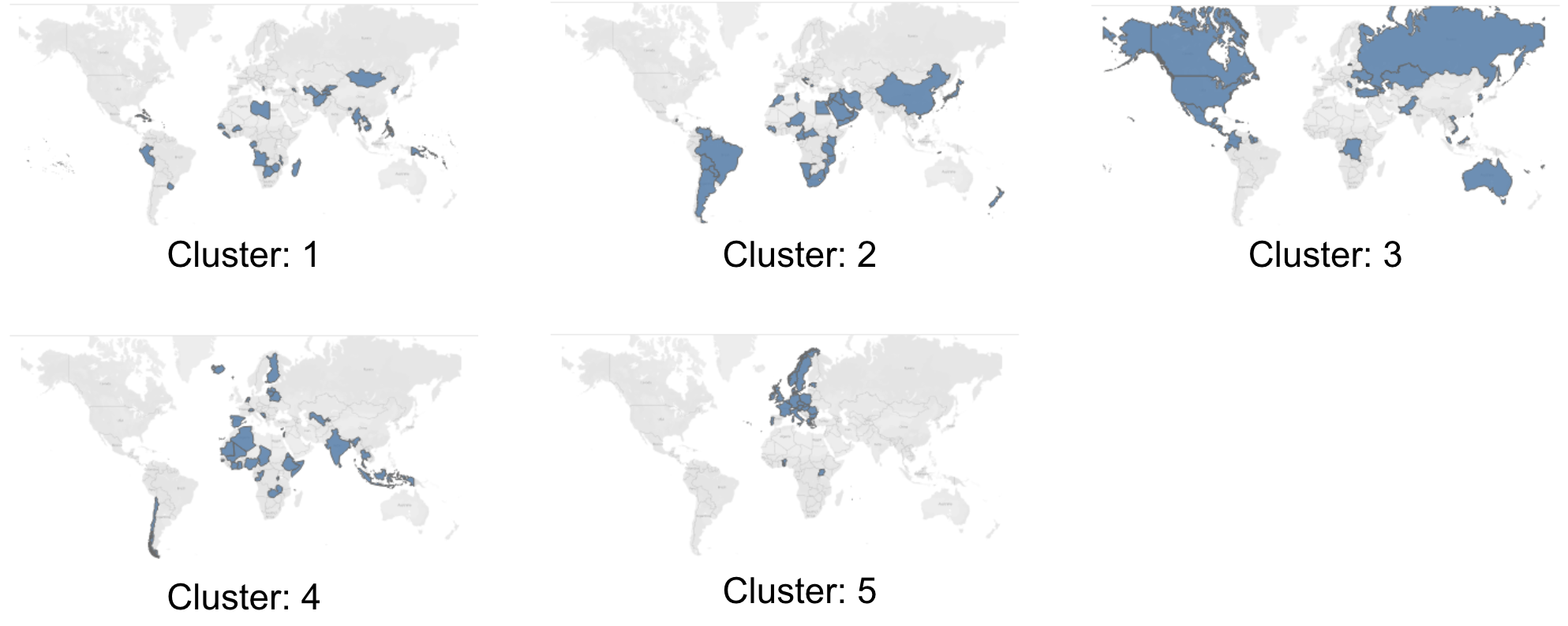}
    \caption{World map colored by KRAFTY joint clusters for 2010; blue color denotes cluster membership.}
    \label{fig:map_1yr}
\end{figure}


\begin{figure}
    \centering   
    \begin{minipage}[b]{0.48\linewidth}
        \centering
        \includegraphics[width=\linewidth]{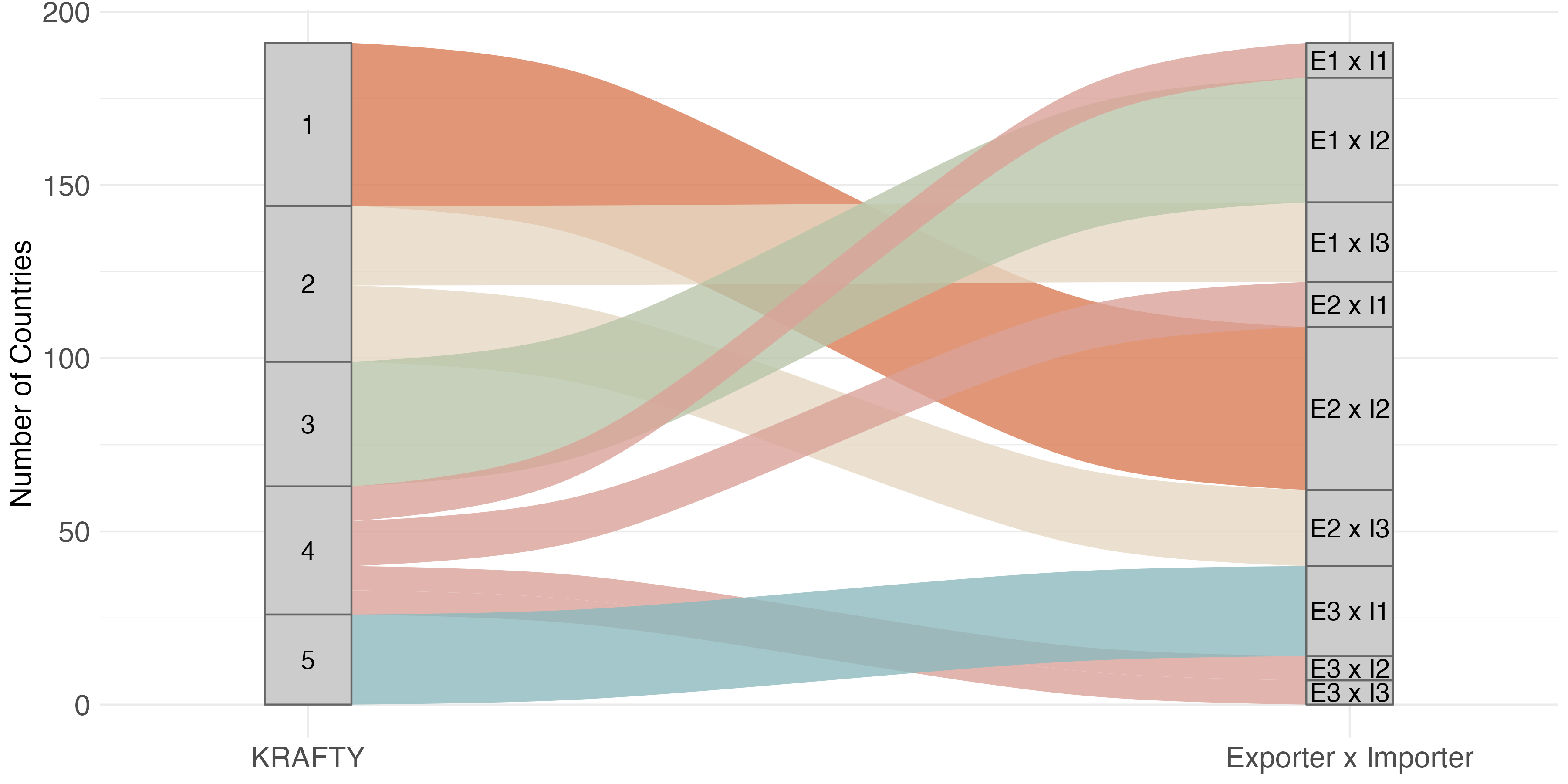}
        \vspace{3pt}
        \centerline{\small (a) Joint (KRAFTY) vs Single-Source}
        \label{fig:alluvial_1yr_kr_exp_imp} 
    \end{minipage}
    \hfill 
    \begin{minipage}[b]{0.48\linewidth}
        \centering
        \includegraphics[width=\linewidth]{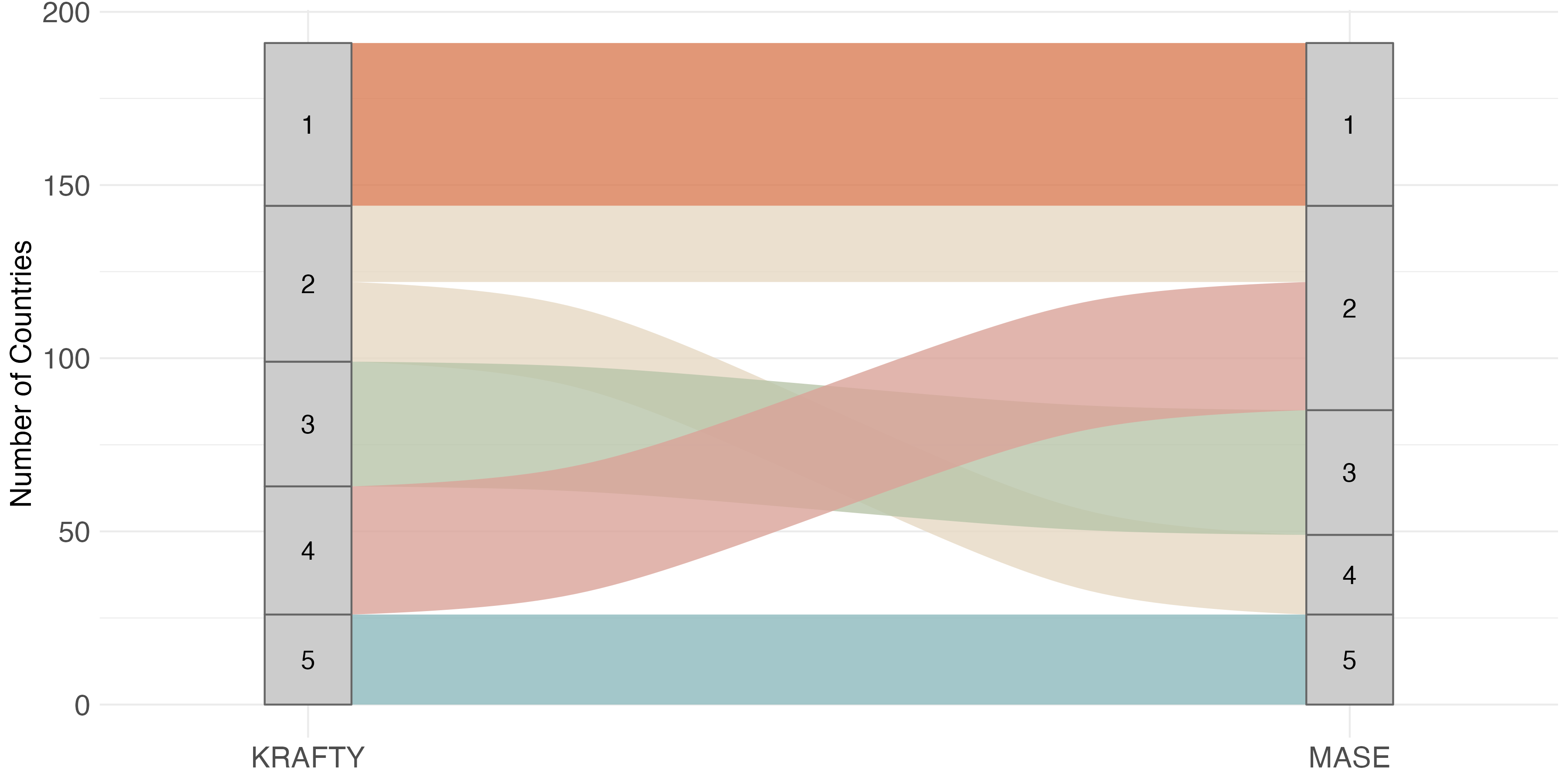}
        \vspace{3pt}
        \centerline{\small (b) KRAFTY vs MASE}
        \label{fig:alluvial_1yr_kr_mase}
    \end{minipage}
    
    \caption{Alluvial plot comparing cluster assignments for 2010. (a) contrasts KRAFTY’s joint clusters with combinations of single-view clusters; identical colors indicate the same set of entities, and block height reflects the number of countries in each cluster. (b) compares KRAFTY’s joint clusters with those from MASE.}
    \label{fig:alluvial_1yr}
\end{figure}

In this study, we apply both KRAFTY and MASE method to the $\hatZ^{(v)}: v \in \{(2010,\mathrm{imp}),\ (2010,\mathrm{exp})\}$ matrices which are the 2010 importer and exporter cluster assignments, to recover joint groups. Figure~\ref{fig:scree_1yr} shows the scree plots, with a red point marking the estimated number of joint clusters for KRAFTY (Fig.~\ref{fig:scree_1yr}(a)) and MASE (Fig.~\ref{fig:scree_1yr}(b)). The scree plot of KRAFTY exhibits a distinct elbow, making selection easier, which is consistent with our earlier findings. The appendix scree plots for the two-year exporter and importer analysis also show the same advantage. We use hierarchical clustering in both methods and compare their joint cluster assignments. The ARI is 0.79, which indicates a strong agreement between the two methods. This aligns with our simulations: when the number of joint clusters is smaller than the sum of the single-source clusters, KRAFTY with $\hatZ^{(v)}$ inputs performs similarly to MASE. 

Next, we color the world map by joint clusters to see the patterns they reveal; the result is shown in Figure~\ref{fig:map_1yr}. From the colored map, the clusters are primarily regional: Europe, North America, and an Asia–Africa–South America group. This likely reflects the high perishability of raw chicken meat, which favors shorter, regional trading. However, there are exceptions: clusters 1 and 4 appear to mix subsets of Asia–Africa and Europe–Asia-Africa, respectively. These exceptions correspond to groups with significantly lower trading volumes compared to the major regional clusters, as detailed in the trading network heatmap in Appendix Figure~\ref{fig:adjacency_1yr}. Hence, the joint clusters capture both regional patterns and countries’ trading-value behavior. Moreover, we use alluvial plots (Figure~\ref{fig:alluvial_1yr}) to visualize how the joint clusters come from the single-source clusters and to compare where KRAFTY’s joint assignments differ slightly from MASE’s.

\FloatBarrier
\section{Discussion}

In this work, we propose KRAFTY, a novel joint clustering framework based on the transposed Khatri-Rao product. KRAFTY requires no assumptions about the sources of single-view data and operates directly on either the clustering assignment matrices ($\hatZ^{(v)}$) or low-dimensional spectral embeddings ($\hatU^{(v)}$) from each view. This design makes KRAFTY a flexible and adaptable tool for recovering the joint clustering structure across multiple views. Another appealing feature of KRAFTY is that it supplies sufficient dimension for joint clusters to occupy orthogonal subspaces, unlike concatenation which cannot have rank greater than the sum of the single-view cluster counts. Our main results show that when the number of data points $n$ is large, the joint clustering error is upper-bounded by the sum of the single-view clustering errors. If each single-view clustering is consistent, the joint clustering is also consistent; if the single-view clustering is perfect, the joint clustering is also perfect. This is a desirable property, as researchers can use existing high-quality single-view clustering methods independently and in parallel to achieve accurate joint clustering results. Moreover, we prove that when $n$ is large, the joint embedding produced by KRAFTY has a clear elbow at the true number of joint clusters $K$, enabling reliable estimation of $K$ using the scree plot or automatic dimension selection methods. In simulations, we systematically compare KRAFTY with the widely used MASE method, as well as evaluate its performance under different input types, $\hatZ^{(v)}$ and $\hatU^{(v)}$. KRAFTY demonstrates strong performance in both clustering assignment accuracy and estimation of the number of joint clusters, particularly when the true number of joint clusters $K$ exceeds the sum of single-view cluster counts. Even when $K$ is smaller than this sum, KRAFTY still achieves comparable performance. In our world trade data analysis, we apply KRAFTY to recover joint clusters of countries based on their roles as exporters and importers. 

Finally, we discuss several directions for future work. Although KRAFTY assumes consistent single-view clustering, real-world scenarios often involve noisy, imprecise, or biased clusterings. It would be interesting to explore whether KRAFTY could be used to denoise or refine such noisy inputs through joint structure. Moreover, the presence of evolving cluster structures in many real-world datasets suggests an interesting application of KRAFTY to dynamic community discovery.


\section{Supplementary Material}
The Supplementary Material includes proofs for Theorems 1-7, additional simulations, a visual example of KRAFTY, and an additional example on the real data application. The code for simulation and real data analysis is available at GitHub: \url{https://github.com/Ciel-Gao/KRAFTY}.


\begin{thebibliography}{36}
\expandafter\ifx\csname natexlab\endcsname\relax\def\natexlab#1{#1}\fi

\bibitem[{Abbe(2023)}]{abbe2023community}
\textsc{Abbe, E.} (2023).
\newblock Community detection and stochastic block models.

\bibitem[{Abbe et~al.(2022)Abbe, Fan \& Wang}]{abbe_fan_AOS2022}
\textsc{Abbe, E.}, \textsc{Fan, J.} \& \textsc{Wang, K.} (2022).
\newblock {An ${\ell _{p}}$ theory of PCA and spectral clustering}.
\newblock \textit{The Annals of Statistics} \textbf{50}, 2359 -- 2385.

\bibitem[{Agterberg et~al.(2025)Agterberg, Lubberts \& Arroyo}]{agterberg2025}
\textsc{Agterberg, J.}, \textsc{Lubberts, Z.} \& \textsc{Arroyo, J.} (2025).
\newblock Joint spectral clustering in multilayer degree-corrected stochastic blockmodels.

\bibitem[{Agterberg \& Zhang(2024)}]{agterberg2024}
\textsc{Agterberg, J.} \& \textsc{Zhang, A.} (2024).
\newblock Estimating higher-order mixed memberships via the $\ell_{2,\infty}$ tensor perturbation bound.

\bibitem[{Arroyo et~al.(2020)Arroyo, Athreya, Cape, Chen, Priebe \& Vogelstein}]{arroyo2020}
\textsc{Arroyo, J.}, \textsc{Athreya, A.}, \textsc{Cape, J.}, \textsc{Chen, G.}, \textsc{Priebe, C.~E.} \& \textsc{Vogelstein, J.~T.} (2020).
\newblock Inference for multiple heterogeneous networks with a common invariant subspace.

\bibitem[{Domenico et~al.(2014)Domenico, Nicosia, Arenas \& Latora}]{DeDomenico2014}
\textsc{Domenico, M.~D.}, \textsc{Nicosia, V.}, \textsc{Arenas, A.} \& \textsc{Latora, V.} (2014).
\newblock Structural reducibility of multilayer networks.
\newblock \textit{Nature Communications} \textbf{6}.

\bibitem[{Fortunato \& Newman(2022)}]{Fortunato2022}
\textsc{Fortunato, S.} \& \textsc{Newman, M. E.~J.} (2022).
\newblock 20 years of network community detection.
\newblock \textit{Nature Physics} \textbf{18}, 848–850.

\bibitem[{Gallagher et~al.(2021)Gallagher, Jones \& Rubin-Delanchy}]{UASE}
\textsc{Gallagher, I.}, \textsc{Jones, A.} \& \textsc{Rubin-Delanchy, P.} (2021).
\newblock Spectral embedding for dynamic networks with stability guarantees.
\newblock In \textit{Advances in Neural Information Processing Systems}, M.~Ranzato, A.~Beygelzimer, Y.~Dauphin, P.~Liang \& J.~W. Vaughan, eds., vol.~34. Curran Associates, Inc.

\bibitem[{Han et~al.(2015)Han, Xu \& Airoldi}]{han2015}
\textsc{Han, Q.}, \textsc{Xu, K.~S.} \& \textsc{Airoldi, E.~M.} (2015).
\newblock Consistent estimation of dynamic and multi-layer block models.

\bibitem[{Handcock et~al.(2007)Handcock, Raftery \& Tantrum}]{handcock2007}
\textsc{Handcock, M.~S.}, \textsc{Raftery, A.~E.} \& \textsc{Tantrum, J.~M.} (2007).
\newblock Model-based clustering for social networks.
\newblock \textit{Journal of the Royal Statistical Society Series A: Statistics in Society} \textbf{170}, 301--354.

\bibitem[{Holland et~al.(1983)Holland, Laskey \& Leinhardt}]{Holland1983}
\textsc{Holland, P.}, \textsc{Laskey, K.~B.} \& \textsc{Leinhardt, S.} (1983).
\newblock Stochastic blockmodels: First steps.
\newblock \textit{Social Networks} \textbf{5}, 109--137.

\bibitem[{Hubert \& Arabie(1985)}]{hubert1985ARI}
\textsc{Hubert, L.} \& \textsc{Arabie, P.} (1985).
\newblock Comparing partitions.
\newblock \textit{Journal of classification} \textbf{2}, 193--218.

\bibitem[{Jaeger \& Banks(2023)}]{ClusterAnalysis}
\textsc{Jaeger, A.} \& \textsc{Banks, D.} (2023).
\newblock Cluster analysis: A modern statistical review.
\newblock \textit{WIREs Computational Statistics} \textbf{15}, e1597.

\bibitem[{Jin et~al.(2023)Jin, Yu, Jiao, Pan, He, Wu, Yu \& Zhang}]{Jin2023}
\textsc{Jin, D.}, \textsc{Yu, Z.}, \textsc{Jiao, P.}, \textsc{Pan, S.}, \textsc{He, D.}, \textsc{Wu, J.}, \textsc{Yu, P.~S.} \& \textsc{Zhang, W.} (2023).
\newblock A survey of community detection approaches: From statistical modeling to deep learning.
\newblock \textit{IEEE Transactions on Knowledge and Data Engineering} \textbf{35}, 1149--1170.

\bibitem[{Jin(2015)}]{Jin_2015}
\textsc{Jin, J.} (2015).
\newblock Fast community detection by score.
\newblock \textit{The Annals of Statistics} \textbf{43}.

\bibitem[{Jing et~al.(2021)Jing, Li, Lyu \& Xia}]{JingLi2021}
\textsc{Jing, B.-Y.}, \textsc{Li, T.}, \textsc{Lyu, Z.} \& \textsc{Xia, D.} (2021).
\newblock {Community detection on mixture multilayer networks via regularized tensor decomposition}.
\newblock \textit{The Annals of Statistics} \textbf{49}, 3181 -- 3205.

\bibitem[{Kim et~al.(2018)Kim, Bandeira \& Goemans}]{kim2018sbmhypergraphs}
\textsc{Kim, C.}, \textsc{Bandeira, A.~S.} \& \textsc{Goemans, M.~X.} (2018).
\newblock Stochastic block model for hypergraphs: Statistical limits and a semidefinite programming approach.
\newblock \textit{arXiv preprint} \textbf{arXiv:1807.02884}.

\bibitem[{Kivel{\"a} et~al.(2014)Kivel{\"a}, Arenas, Barthelemy, Gleeson, Moreno \& Porter}]{kivela2014multilayer}
\textsc{Kivel{\"a}, M.}, \textsc{Arenas, A.}, \textsc{Barthelemy, M.}, \textsc{Gleeson, J.~P.}, \textsc{Moreno, Y.} \& \textsc{Porter, M.~A.} (2014).
\newblock Multilayer networks.
\newblock \textit{Journal of Complex Networks} \textbf{2}, 203--271.

\bibitem[{Ledesma et~al.(2015)Ledesma, Valero-Mora \& Macbeth}]{Ledesma_Valero-Mora_Macbeth_2015}
\textsc{Ledesma, R.~D.}, \textsc{Valero-Mora, P.} \& \textsc{Macbeth, G.} (2015).
\newblock The scree test and the number of factors: a dynamic graphics approach.
\newblock \textit{The Spanish Journal of Psychology} \textbf{18}, E11.

\bibitem[{Lei \& Lin(2023)}]{Lei02102023}
\textsc{Lei, J.} \& \textsc{Lin, K.~Z.} (2023).
\newblock Bias-adjusted spectral clustering in multi-layer stochastic block models.
\newblock \textit{Journal of the American Statistical Association} \textbf{118}, 2433--2445.

\bibitem[{Lei \& Rinaldo(2015)}]{Lei2015}
\textsc{Lei, J.} \& \textsc{Rinaldo, A.} (2015).
\newblock Consistency of spectral clustering in stochastic block models.
\newblock \textit{The Annals of Statistics} \textbf{43}.

\bibitem[{Lyzinski et~al.(2015)Lyzinski, Sussman, Tang, Athreya \& Priebe}]{lyzinski2015}
\textsc{Lyzinski, V.}, \textsc{Sussman, D.}, \textsc{Tang, M.}, \textsc{Athreya, A.} \& \textsc{Priebe, C.} (2015).
\newblock Perfect clustering for stochastic blockmodel graphs via adjacency spectral embedding.

\bibitem[{Löffler et~al.(2020)Löffler, Zhang \& Zhou}]{loffler2020}
\textsc{Löffler, M.}, \textsc{Zhang, A.~Y.} \& \textsc{Zhou, H.~H.} (2020).
\newblock Optimality of spectral clustering in the gaussian mixture model.

\bibitem[{MacDonald et~al.(2021)MacDonald, Levina \& Zhu}]{macdonald2021}
\textsc{MacDonald, P.~W.}, \textsc{Levina, E.} \& \textsc{Zhu, J.} (2021).
\newblock Latent space models for multiplex networks with shared structure.

\bibitem[{Milligan \& Cooper(1987)}]{ClusteringMethods}
\textsc{Milligan, G.~W.} \& \textsc{Cooper, M.~C.} (1987).
\newblock Methodology review: Clustering methods.
\newblock \textit{Applied Psychological Measurement} \textbf{11}, 329--354.

\bibitem[{Paul \& Chen(2018)}]{paul2018}
\textsc{Paul, S.} \& \textsc{Chen, Y.} (2018).
\newblock Spectral and matrix factorization methods for consistent community detection in multi-layer networks.

\bibitem[{Pensky(2024)}]{pensky2024daviskahan}
\textsc{Pensky, M.} (2024).
\newblock Davis-kahan theorem in the two-to-infinity norm and its application to perfect clustering.

\bibitem[{Qin \& Rohe(2013)}]{qin2013regularized}
\textsc{Qin, T.} \& \textsc{Rohe, K.} (2013).
\newblock Regularized spectral clustering under the degree-corrected stochastic blockmodel.

\bibitem[{Rohe et~al.(2011)Rohe, Chatterjee \& Yu}]{Rohe2011}
\textsc{Rohe, K.}, \textsc{Chatterjee, S.} \& \textsc{Yu, B.} (2011).
\newblock Spectral clustering and the high-dimensional stochastic blockmodel.
\newblock \textit{The Annals of Statistics} \textbf{39}.

\bibitem[{Sussman et~al.(2012)Sussman, Tang, Fishkind \& Priebe}]{sussman2012}
\textsc{Sussman, D.~L.}, \textsc{Tang, M.}, \textsc{Fishkind, D.~E.} \& \textsc{Priebe, C.~E.} (2012).
\newblock A consistent adjacency spectral embedding for stochastic blockmodel graphs.

\bibitem[{Toh \& Horimoto(2002)}]{toh2002}
\textsc{Toh, H.} \& \textsc{Horimoto, K.} (2002).
\newblock Inference of a genetic network by a combined approach of cluster analysis and graphical gaussian modeling.
\newblock \textit{Bioinformatics} \textbf{18}, 287--297.

\bibitem[{Vall\`es-Catal\`a et~al.(2016)Vall\`es-Catal\`a, Massucci, Guimer\`a \& Sales-Pardo}]{MultilayerSBM}
\textsc{Vall\`es-Catal\`a, T.}, \textsc{Massucci, F.~A.}, \textsc{Guimer\`a, R.} \& \textsc{Sales-Pardo, M.} (2016).
\newblock Multilayer stochastic block models reveal the multilayer structure of complex networks.
\newblock \textit{Phys. Rev. X} \textbf{6}, 011036.

\bibitem[{von Luxburg(2007)}]{vonluxburg2007}
\textsc{von Luxburg, U.} (2007).
\newblock A tutorial on spectral clustering.

\bibitem[{Xie \& Zhang(2025)}]{Xie-AOS2025}
\textsc{Xie, F.} \& \textsc{Zhang, Y.} (2025).
\newblock {Higher-order entrywise eigenvectors analysis of low-rank random matrices: Bias correction, Edgeworth expansion and bootstrap}.
\newblock \textit{The Annals of Statistics} \textbf{53}, 1667 -- 1696.

\bibitem[{Yang \& Wang(2018)}]{Yang2018}
\textsc{Yang, Y.} \& \textsc{Wang, H.} (2018).
\newblock Multi-view clustering: A survey.
\newblock \textit{Big Data Mining and Analytics} \textbf{1}, 83--107.

\bibitem[{Zhu \& Ghodsi(2006)}]{ZHU2006918}
\textsc{Zhu, M.} \& \textsc{Ghodsi, A.} (2006).
\newblock Automatic dimensionality selection from the scree plot via the use of profile likelihood.
\newblock \textit{Computational Statistics \& Data Analysis} \textbf{51}, 918--930.

\end{thebibliography}


\newpage


\appendix
\appendixone

\section{Proofs of the statements in the paper.\ }
\label{sec:proofs}
\setcounter{equation}{0} 

\renewcommand{\theequation}
{A\arabic{equation}}

\subsection{Proof of Theorem~\ref{th:number_clust_errors}.\ }
\label{sec:proof_Th1}
For simplicity, denote
\begin{equation} \label{eq:X-hatX1}
X = Z^{(1)} \tkr Z^{(2)} =   Z\,H^\top, \quad
\hatX =   \hatZ^{(1)} \tkr \hatZ^{(2)}, \quad 
\Xi = \hatX - X, 
\end{equation}
where matrix $H$ is defined in \eqref{eq:TKR}.
Let 
\begin{equation}  \label{eq:X_SVD1}
X = U D H^\top, \quad \hatX
  = \hatU\,\hatD\,\hatV^\top
    + \hatU^\perp\,\hatD^\perp\,(\hatV^\perp)^\top
\end{equation}
be the SVDs of $X$ and $\hatX$. Here, $D=\diag  \left(\sqrt{n_1},\dots,\sqrt{n_K}\right)$.

Recall that $J_v$ is the set of elements mis-clustered by $\hatZ^{(v)}$ and 
$\calE_{v,n}$, is their number, $v=1,2$. Then $J_v^c$ is the set of correctly clustered elements. 
Denote, $J= J_1 \cup J_2$, $J^{c} = J_1^{c} \cap J_2^{c}$ and observe that, if $i \in J^c$, then $\Xi(i,:) = 0$.
Here, due to  $|J_v^{c}| = n - \calE_{v,n}$,  one has 
$|J^{c}| \geq n - (\calE_{1,n}+\calE_{2,n})$.
In order to evaluate the clustering accuracy of Algorithm~\ref{alg:KRAFTY}, observe that,
by Assumption~\ref{assump:balancedclusters} 
\begin{equation} \label{eq:sigK1}
\sigma_K(X) = \min_{k \in [K]}\, \sqrt{n_k}
 \geq \frac{c_o\sqrt{n}}{\sqrt{K}} .
\end{equation}
Since $\left\|\hatX(i,:)-X(i,:)\right\|^2 = 0$ if $i \in J^c$, and
$\left\|\hatX(i,:)-X(i,:)\right\|^2 = 2$ if $i \in J$, we have
\begin{equation} \label{eq:Frob1}
\|\hatX - X\|_F^2 = 2\, |J|   \leq 2(\calE_{1,n} + \calE_{2,n}).
\end{equation}
Therefore, $\|\Xi\| = \|\hatX - X\|  \leq \sqrt{2(\calE_{1,n} + \calE_{2,n})}$. Consistency of 
the individual clusterings (Assumption~\ref{assump:consistentZs}) implies that 
$n^{-1}\calE_{v,n}\rightarrow0$.
Then, for $n$ large enough, one has  
\begin{equation} \label{eq:sig_K_hatX}
    \sigma_K(\hatX) \geq \sigma_K(X) - \|\hatX - X\|
     \geq \frac{c_o\sqrt{n}}{\sqrt{K}} \left(1 - \frac{\sqrt{2K}}{c_o} \frac{\sqrt{\calE_{1n} + \calE_{2n}}}{\sqrt{n}}\right) \geq 
     \frac{c_o\sqrt{n}}{2\,\sqrt{K}}.
\end{equation}
Observe that, by Assumption~\ref{assump:consistentZs}, as $n \to \infty$,
\begin{equation} \label{eq:eps0n}
    \epsilon_{o,n}: = \frac{\sqrt{\calE_{1n} + \calE_{2n}}}{\sqrt{n}} = o(1). 
\end{equation}
Then, by Wedin theorem and Assumptions~\ref{assump:finiteclusternumbers} and \ref{assump:consistentZs}, as $n \to \infty$,
\begin{equation} \label{eq:hatUHbound}
\max \left( \|\hatU - U\, W_U \|^2_F, \|\hatV - H\, W_V \|^2_F \right)
    \leq \frac{2\, \|\Xi\|^2_F}{\sigma_K^2(X)} 
    \leq \frac{4K\, (\calE_{1,n} + \calE_{2,n})}{c_o^2\, n} \asymp \epsilon_{o,n}^2 = o(1).
\end{equation}
Here, $W_U$ and $W_V$ are the rotations which provides the closest match between $\hatU$ and $U$,
and $\hatV$ and $H W_V$, respectively.
Specifically, if $U^\top \hatU = W_1 D_W W_2^\top$
is the SVD of $U^\top \hatU$, then $W_U = W_1 W_2^\top$.

In order to assess the number of clustering errors on the basis of $\hatX$, we  
apply Lemma~D1 of \cite{abbe_fan_AOS2022}), which we provide here for readers' convenience.


\begin{lemma}\label{lem:abbe_fan}   
{\bf (Lemma~D1 of \cite{abbe_fan_AOS2022})}. \  Let matrix $B \in \RR^{r \times m}$ with rows $B(k,:)$, $k \in [K]$, 
be the matrix of true means and $z:  [n] \to [K]$ be the true clustering function. 
For a data matrix $\hatX \in \RR^{n \times m}$, any matrix $\tilB \in \RR^{K\times m}$ and any clustering 
function $\tilz: [n] \to  [K]$, define 
\begin{equation} \label{eq:L_function}
L \left( \tilB, \tilz \right) = \sum_{i=1}^n \Big\|\hatX(i,:) -  \tilB(\tilz(i),:) \Big\|^2.
\end{equation}
Let $\hatB \in \RR^{K \times m}$ and $\hatz [n] \to  [K]$  be solutions to the 
$(1+\epsilon)-$approximate k-means problem, i.e.
$$
L \left( \hatB, \hatz \right) \leq (1+\epsilon)\,  \min_{\tilB, \tilz}\    L \left( \tilB, \tilz \right).
$$  
Let $s = \displaystyle \min_{i  \neq j} \|B(i,:) - B(j,:)\|$ and 
$\nmin$ be the minimum cluster size.  
 If for some $\delta \in (0, s/2)$ one has
\begin{equation} \label{eq:abbe_del_cond}
L \left( B, z \right) = \sum_{i=1}^n \Big\|\hatX(i,:) -  B(z(i),:) \Big\|^2 \leq \frac{\delta^2 \, \nmin}{K\, (1 + \sqrt{1+\epsilon} )^2},
\end{equation} 
then there exists a permutation $\phi: [K] \to [K]$ such that
\begin{align}  \label{eq:cl_err1}
& \left\{ i:\,  \hatz(i) \neq \phi(z(i)) \right\} \subseteq \Big \{ i:\, \|\hatX(i,:) - B(z(i),:)\| \geq s/2 - \delta \Big\}, \\
\label{eq:cl_err2}
& \# \left\{ i:\,  \hatz(i) \neq \phi(z(i)) \right\}  \leq (s/2 - \delta)^{-2}\, L \left( B, z \right). 
\end{align}
\end{lemma}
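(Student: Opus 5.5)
The plan is the standard two-stage argument for approximate $k$-means: first show that every true center has a nearby estimated center, then show that any misassigned point must be far from its true center. Throughout, write $L^* = L(B,z)$, and let $\bar X, \check X \in \RR^{n\times m}$ be the matrices with rows $\bar X(i,:) = B(z(i),:)$ and $\check X(i,:) = \hatB(\hatz(i),:)$.

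\textbf{Step 1 (Frobenius comparison).} Since $(B,z)$ is a feasible pair, $L(\hatB,\hatz) \le (1+\epsilon)\min L \le (1+\epsilon) L^*$. Note that $L^* = \|\hatX-\bar X\|_F^2$ and $L(\hatB,\hatz)=\|\hatX-\check X\|_F^2$. The triangle inequality in Frobenius norm then gives
\[
\|\bar X - \check X\|_F \le \bigl(1+\sqrt{1+\epsilon}\bigr)\sqrt{L^*}.
\]

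\textbf{Step 2 (matching the centers).} I claim that for every $k\in[K]$ there is some $j$ with $\|\hatB(j,:)-B(k,:)\|<\delta$. Suppose not for some $k$. Every row $i$ with $z(i)=k$ (there are at least $\nmin$ of them) then satisfies $\|\bar X(i,:)-\check X(i,:)\|\ge\delta$. Hence $\|\bar X-\check X\|_F^2 \ge \delta^2\nmin$. Condition \eqref{eq:abbe_del_cond} together with Step~1 gives $\|\bar X-\check X\|_F^2 \le \delta^2\nmin/K < \delta^2\nmin$, a contradiction. Set $\phi(k)$ to be such a $j$. Since $\|B(k,:)-B(k',:)\|\ge s>2\delta$ for $k\ne k'$, no $\hatB(j,:)$ can lie within $\delta$ of two distinct true centers. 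So $\phi$ is injective, hence a permutation of $[K]$.

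\textbf{Step 3 (localizing errors).} This is the step I expect to be the main obstacle. An approximate minimizer need not assign each point to its nearest estimated center. I would first reduce to that case: replacing $\hatz(i)$ by $\arg\min_j\|\hatX(i,:)-\hatB(j,:)\|$ can only decrease $L$, so the $(1+\epsilon)$ guarantee and Steps~1--2 are preserved. I would read the lemma as referring to this Voronoi assignment, which is what any $k$-means routine outputs for its final centers.

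Now take $i$ with $\hatz(i)=\phi(l)$ for some $l\ne z(i)$, and set $a=\|\hatX(i,:)-B(z(i),:)\|$. Because $\hatB(\phi(l),:)$ is the nearest estimated center,
\[
\|\hatX(i,:)-\hatB(\phi(l),:)\| \le \|\hatX(i,:)-\hatB(\phi(z(i)),:)\| \le a+\delta .
\]
On the other hand, the triangle inequality, separation $s$, and Step~2 give
\[
\|\hatX(i,:)-\hatB(\phi(l),:)\| \ge \|B(l,:)-B(z(i),:)\| - \delta - a \ge s-\delta-a .
\]
Combining the two displays yields $a \ge s/2-\delta$, which proves \eqref{eq:cl_err1}.

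\textbf{Step 4 (counting).} Each misclassified $i$ contributes at least $(s/2-\delta)^2$ to $L^*$ by Step~3. A Markov-type count therefore gives $\#\{i:\hatz(i)\ne\phi(z(i))\}\le (s/2-\delta)^{-2}L^*$, which is \eqref{eq:cl_err2}.
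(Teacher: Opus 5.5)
Your proof is correct and follows the standard argument behind Lemma~D1 of \cite{abbe_fan_AOS2022}; the paper only quotes that lemma and gives no proof of its own. The steps are the same: the Frobenius triangle inequality gives the factor $(1+\sqrt{1+\epsilon})$, the $\nmin$ rows of each true cluster force an estimated center within $\delta$, the separation $s>2\delta$ makes the matching a permutation, and a nearest-center comparison plus a Markov count give \eqref{eq:cl_err1} and \eqref{eq:cl_err2}.

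One point needs sharper wording. Step~3 is not a reduction, because reassigning labels changes the very $\hatz$ that the conclusion is about. The nearest-center reading is also not optional: the statement is false for an arbitrary $(1+\epsilon)$-approximate pair.

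A counterexample: take $K=2$, $m=1$, $B=(0,1)^\top$ (so $s=1$), $\epsilon=1$ and $\delta=0.4$.
\begin{itemize}
\item Let cluster~1 consist of one point $i_0$ at $0$ and $500$ points at each of $\pm 0.05$.
\item Let cluster~2 consist of $500$ points at each of $1\pm 0.05$.
\item The optimum is the true partition, so $\min L=L(B,z)=5$. This is below the threshold $\delta^2\nmin/(K(1+\sqrt{2})^2)\approx 13.7$.
\item The pair $\hatB=B$, with $\hatz=z$ except $\hatz(i_0)=2$, has cost $6\le(1+\epsilon)\cdot 5$, so it is admissible.
\end{itemize}
Under the identity permutation, the noiseless point $i_0$ is misclassified. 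Under the swap, all $2000$ points other than $i_0$ are misclassified, although their residuals are $0.05<s/2-\delta=0.1$. So \eqref{eq:cl_err1} fails for every $\phi$.

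You should therefore state $\hatz(i)\in\arg\min_j\|\hatX(i,:)-\hatB(j,:)\|$ as an explicit hypothesis. This holds for the output of any standard $k$-means routine, which is how the lemma is used in the paper. With that hypothesis, your Steps~1--4 go through as written.
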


\medskip  \medskip

\noindent
We apply Lemma~\ref{lem:abbe_fan} with 
$s = \displaystyle  \min_{i \neq j} \|U(i,:) -  U(j,:)\|\geq \displaystyle (2/ \max (n_k))^{1/2}$, $X = U\, W_U$, $B(z(i),:) = X(i,:)$   and  
$ L \left( B, z \right) \leq \left\|\hatU - U\,W_U\right\|^2_F$. Therefore, 
$$
\delta^2  \leq \frac{  \|\hatU - U\, W_U \|^2_F \, K\, (1 + \sqrt{1+\epsilon} )^2}{\min (n_k)},
$$
so that, by \eqref{eq:hatUHbound}, we obtain
\begin{equation} \label{eq:s_del_Z}
\frac{s}{2} \geq \frac{\sqrt{K}}{C_o \, \sqrt{2n}}, \quad
\delta \leq \frac{2 K \, (1 + \sqrt{1+\epsilon})}{c_o^{3/2}}\, \frac{\sqrt{\calE_{1,n} + \calE_{2,n}}\, \sqrt{K}}{n}. 
\end{equation} 
By Lemma~\ref{lem:abbe_fan}, row $i$ is clustered correctly, if $ \| e_i^\top (\hatU - UW_U)   \| < s/2 - \delta$,
where
\begin{equation*}
\frac{s}{2}-\delta \geq \frac{\sqrt{K}}{C_o \, \sqrt{2n}} \, \left( 1 - \frac{2\sqrt{2}\, C_o K \, (1 + \sqrt{1+\epsilon}) }
{c_o^{3/2}}\, \frac{\sqrt{\calE_{1,n} + \calE_{2,n}}}{\sqrt{n}} \right), 
\end{equation*}
and the second term in the parenthesis is $o(1)$ as $n \to \infty$ due to Assumption~\ref{assump:consistentZs}. Hence, 
correct clustering of row $i$ is guaranteed by 
\begin{equation}  \label{eq:clust_row_i_byZ}
     \| e_i^\top (\hatU - UW_U)   \| = o \left(   \sqrt{K}/\sqrt{n}  \right).   
\end{equation}
Now, we consider rows with $i \in J^{c}$ and show that all these rows are clustered correctly. 
For  $i \in J^{c}$, $e_i^\top \Xi = \Xi(i,:)=0$.

From \eqref{eq:X-hatX1} derive that
\begin{equation}  \label{eq:R_1234}
R_i:= \| e_i^\top (\hatU - U W_U)   \| \leq R_{i1} + R_{i2} + R_{i3} + R_{i4} + R_{i5}
\end{equation}
with $\Xi = \hatX - X$ and
\begin{align}  \label{eq:R1_R2}
& R_{i1} = \| e_i \Xi H W_V \hatD^{-1}   \| = 0, \quad
 R_{i2} =  \| e_i^\top \Xi (\hatV - H W_V) \hatD^{-1}  \|=0, \\
& R_{i3} = \| e_i^\top U U^\top \Xi H W_V \hatD^{-1} \|, \quad
 R_{i4} =  \| e_i^\top U U^\top \Xi (\hatV - H W_V) \hatD^{-1}  \|, \nonumber\\
& R_{i5} =  \| e_i^\top U (U^\top \hatU -W_U) \|. \nonumber
\end{align}
Observe that, for $n$ large enough, by \eqref{eq:sig_K_hatX}, one has 
\begin{equation*}   
\| \hatD^{-1}\| = \sigma_{\min}^{-1}(\hatX) \leq   C\, \sqrt{\frac{K}{n}}.
\end{equation*}
Therefore, using \eqref{eq:Frob1}, \eqref{eq:eps0n} and  \eqref{eq:hatUHbound}, as $n \to \infty$, obtain
\begin{align}  \label{eq:R3}
R_{i3} & \leq \|U\|_{2, \infty} \, \|\Xi\| \, \| \hatD^{-1}\|  \leq C \,  (K/\sqrt{n}) \, \epsilon_{o,n} = o (\sqrt{K}/\sqrt{n}), \\
\label{eq:R4}
R_{i4} & \leq  \|U\|_{2, \infty} \, \|\Xi\| \, \| \hatD^{-1}\| \, \|\hatV - H\, W_V \| \leq 
C \,  (K/\sqrt{n})  \, \epsilon_{o,n}^2 = o (\sqrt{K}/\sqrt{n}), \\
\label{eq:R5}
 R_{i5} & \leq   \| e_i^\top U (U^\top \hatU -W_U) \| \leq C\,  (\sqrt{K}/\sqrt{n}) \, \epsilon_{o,n}^2 = o (\sqrt{K}/\sqrt{n}).
\end{align} 
Combination of \eqref{eq:R_1234}--\eqref{eq:R5} implies validity of \eqref{eq:clust_row_i_byZ} and,
therefore, guarantees that, for $n$ large enough, all elements in $J^{c}$ are clustered correctly.
Then \eqref{eq:number_clust_errors} is true and Theorem~\ref{thm:consistentZhats} holds.

\medskip \medskip


\subsection{Proof of Theorems~\ref{th:consistent_U}~and~\ref{th:perfect_U} .\ }
\label{sec:proof_Th2_3}

Without loss of generality, we suppose that Assumption~\ref{assump:incoherentUhats} holds with $v=2$.
In order to assess the number of clustering errors on the basis of $\hatU$, we  apply  Lemma~\ref{lem:abbe_fan} with 
$s = \displaystyle  \min_{z(i) \neq z(j)} \|U(i,:) -  U(j,:)\|$, $X = U\, W_U$, $B(z(i),:) = X(i,:)$   and  
$ L \left( B, z \right) \leq \left\|\hatU - U\,W_U\right\|^2_F$. Therefore, 
\begin{equation} \label{eq:s_lower_Bound}
 s \geq \frac{\sqrt{2}}{\sqrt{\max (n_k)}}   \geq \frac{\sqrt{2K}}{C_o \sqrt{n}}, \quad 
 \delta  \leq \frac{ \|\hatU - U\,W_U \|_F \times \sqrt{K}\, (1+\sqrt{1+\epsilon})}{\sqrt{\min (n_k)}}.  
\end{equation}
Recall that $U = \SVD_K(U^{(1,2)})$ and $\hatU = \SVD_K(\hatU^{(1)} \tkr \hatU^{(2)})$,
where $U^{(1,2)} = U^{(1)} \tkr U^{(2)}$ and  $\hatU^{(1,2)}   = \hatU^{(1)} \tkr \hatU^{(2)}$.
Recall also that, by \eqref{eq:U12_SVD},  
\begin{equation} \label{eq:U12_hatU12}
U^{(1,2)} =    U \, D  V^\top,  \quad V \equiv H, \  \  D = \bigl(D_Z \tilD_Z^{-1}\bigr)^{1/2}.
\end{equation}
Here, $D_Z = Z^\top Z$, so that, for $k_1 \in [K_1]$, $k_2 \in [K_2]$, using double-index, one can write 
\begin{equation*}
 D_Z(k,k) = n_k, \quad 
 \tilD_Z(k,k) = n_{k_1}^{(1)} n_{k_2}^{(2)} I(H((k_1,k_2),k) = 1), \quad k\in[K]. 
 \end{equation*}
Therefore, 
\begin{equation} \label{eq:Dkk}
    D(k,k) = \frac{\sqrt{n_k}}{\sqrt{n_{k_1}^{(1)} n_{k_2}^{(2)}}} I\left(H((k_1,k_2),k) = 1 \right).    
\end{equation}
By Assumption~\ref{assump:balancedclusters},
\begin{equation}  \label{eq:D-balance}
    \frac{ c_D\, \sqrt{K}}{\sqrt{K_1 K_2} \sqrt{n}} \leq D(k,k) \leq \frac{C_D \sqrt{K}}{\sqrt{K_1 K_2}\sqrt{n}},    
\end{equation}
where $c_D =  c_o/(C_o^{(1)} C_o^{(2)})$ and $C_D =  C_o/(c_o^{(1)} c_o^{(2)})$.
The latter, together with \eqref{eq:U12_hatU12} yields
\begin{equation} \label{eq:sig_min_X}
    \sigma_{\min} (U^{(1,2)}) \geq   \frac{ c_D\, \sqrt{K}}{\sqrt{K_1 K_2} \sqrt{n}}.    
\end{equation}

Write the SVD of $\hatU^{(1,2)}$  as
\begin{equation*} \label{eq:hatU12_SVD}
   \hatU^{(1,2)} =  \hatU\,\hatD\,\hatV^\top
    + \hatU^\perp\,\hatD^\perp\,(\hatV^\perp)^\top.
\end{equation*}
Denote $\Xi^{(v)} = \hatU^{(v)} - U^{(v)}W_U^{(v)}$, $v=1,2$, and
$\Xi = \hatU^{(1,2)} -  U^{(1,2)} W_U^{(1,2)}$, where  $W_U^{(v)}$ provide the closest matches in the Frobenius norm, and $W_U^{(1,2)}=W_U^{(1)}\otimes W_U^{(2)}$. Let $W_V$
be a rotation matrix which provides the closest match  in the Frobenius norm
between $\hatV$ and $V W_V$.

Recall that, by Assumptions \ref{assump:finiteclusternumbers} and \ref{assump:consistentUs}, as $n \to \infty$, one has
\begin{equation}  \label{eq:frob_err_v}
     \left\| \Xi^{(v)} \right\|_F  = \left\| \hatU^{(v)} - U^{(v)} W_U^{(v)} \right\|_F    
    \leq C_{\tau} \sqrt{K_v} \calE_{0,n}^{(v)} = o(1),\quad v=1,2.
\end{equation}
Hence, 
\begin{equation}  \label{eq:Xi_decomposition}
\Xi = \Xi^{(1)} \tkr \hatU^{(2)} + (U^{(1)} W_U^{(1)}) \tkr \Xi^{(2)}
\end{equation}
and, by property {\bf P2} of the Khatri-Rao product one has
\begin{equation*}
   \| \Xi  \|_F 
    \leq  \| \Xi^{(1)}  \|_F  \| \hatU^{(2)}  \|_{2,\infty}
    +  \| \Xi^{(2)}  \|_F   \| U^{(1)}  \|_{2,\infty}
\end{equation*}
Then Assumptions \ref{assump:balancedclusters} and \ref{assump:incoherentUhats} and \eqref{eq:frob_err_v} yield that, with probability at least 
$1 - 3 n^{-\tau}$,  
\begin{equation}  \label{eq:Xi_F}
     \| \Xi  \|_F
    \leq \frac{\tilC_{\tau}\, \sqrt{K_1} \sqrt{K_2}}{\sqrt{n}}
 \left( \calE_{0,n}^{(1)}  + \calE_{0,n}^{(2)}  \right),      
\end{equation}
where $\tilC_{\tau}$ is a generic absolute constant that depends on $\tau$ and constants in Assumption \ref{assump:balancedclusters}. 
It is easy to see that the second inequality in  \eqref{eq:s_lower_Bound} and \eqref{eq:Xi_F} immediately yield
\begin{equation}  \label{eq:del_upbound1}
    \delta  \leq \frac{\tilC_{\tau}(1+\sqrt{1+\epsilon})\,K \sqrt{K_1\, K_2}\, (\calE_{0,n}^{(1)} + \calE_{0,n}^{(2)})  }{c_o n}  
\end{equation}
Applying the first inequality in \eqref{eq:s_lower_Bound}, we find that
\begin{equation} \label{eq:half_s-del}
    \frac{s}{2} - \delta 
    \geq \frac{\sqrt{K}}{C_o\sqrt{2n}} \left(1 -  \frac{\tilC_{\tau}\, C_o\, (1+\sqrt{1+\epsilon})\, \sqrt{2K K_1 K_2}\;(\calE_{0,n}^{(1)} + \calE_{0,n}^{(2)})}{c_o\sqrt{n}}    \right). 
\end{equation}
If $n$ is large enough, by the conditions of the theorem, one has
\begin{equation*}
    \frac{s}{2} - \delta \geq \frac{\sqrt{K}}{2\sqrt{2}\, C_o \, \sqrt{n}}.
\end{equation*}
Hence, by Lemma~\ref{lem:abbe_fan}, row $i$ is guaranteed to be clustered correctly if
\begin{equation}  \label{eq:clust_cond_row_i}
     \| e_i^\top (\hatU - UW_U)   \| < 
   \frac{\sqrt{K}}{2\sqrt{2}\, C_o \, \sqrt{n}}.  
\end{equation}
In order to  utilize \eqref{eq:clust_cond_row_i}, we derive an upper bound for $ \| e_i^\top (\hatU - UW_U)   \|$. We expand the quantity $\Xi VW_V\hatD^{-1}$ as
\begin{align*}
\Xi V W_V \hatD^{-1} &= UU^\top \Xi VW_V\hatD^{-1} -(I-UU^\top)\Xi(\hatV-VW_V)\hatD^{-1}+(I-UU^\top)\Xi\hatV\hatD^{-1}\\
&=UU^\top\Xi VW_V\hatD^{-1}-(I-UU^\top)\Xi(\hatV-VW_V)\hatD^{-1}+(I-UU^\top)(\hatU-UW_U),
\end{align*}
making use of the fact that $(I-UU^\top)U^{(1,2)}=0$, and $\hatU^{(1,2)}\hat{V}\hat{D}^{-1}=\hatU$. 
Then for any $i \in [n]$, one has 
\begin{equation} \label{eq:R_i} 
  R_i = \| e_i^\top (\hatU - U W_U) \| \leq R_{i1} + R_{i2} + R_{i3} + R_{i4} + R_{i5}, \quad i \in [n].
\end{equation}
where
\begin{align*}
& R_{i1} = \bigl\| e_i^\top \Xi V W_V \hatD^{-1} \bigr\|,  
& R_{i2} = \bigl\| e_i^\top \Xi (\hatV - V W_V) \hatD^{-1} \bigr\|,\\
& R_{i3} = \bigl\| e_i^\top U U^\top \Xi V W_V \hatD^{-1}\bigr\|,  
& R_{i4} = \bigl\| e_i^\top U U^\top \Xi (\hatV - V W_V) \hatD^{-1} \bigr\|,\\
& R_{i5} = \bigl\| e_i^\top U (U^\top \hatU -W_U) \bigr\|.
\end{align*}
Observe that, by \eqref{eq:D-balance},
\begin{equation}   \label{eq:hatD_kk_lower0}
  \min_k (\hatD (k,k)) \geq \min_k (D (k,k)) - \| \Xi \| 
  \geq   \frac{ c_D\, \sqrt{K}}{\sqrt{K_1 K_2} \sqrt{n}} \left(1 -  
\frac{\tilC_\tau K_1 K_2  ( \calE_{0,n}^{(1)}  + \calE_{0,n}^{(2)})}{c_D\, \sqrt{K}} \right).
\end{equation}
Therefore, if $n$ is large enough, with high probability,
\begin{equation}  \label{eq:hatD_kk_lower}
  \min_k (\hatD (k,k)) \geq \frac{c_D\, \sqrt{K}}{2\, \sqrt{K_1 K_2}\, \sqrt{n}},
\end{equation}
and we have a similar upper bound with high probability
\begin{equation}  \label{eq:hatD_kk_upper}
  \max_k (\hatD (k,k)) \leq \frac{2\, C_D\, \sqrt{K}}{\sqrt{K_1 K_2}\, \sqrt{n}}.
\end{equation}
Since $e_i^\top \Xi = \Xi(i,:)$,  \eqref{eq:hatD_kk_lower} and \eqref{eq:hatD_kk_upper}    lead to
\begin{equation}  \label{eq:e_i_Xi}
 R_{i1} = \| e_i^\top \Xi V W_V \hatD^{-1}  \|  
\leq  \| \Xi (i,:)  \|\, \frac{2 \sqrt{K_1K_2}\sqrt{n}}{c_D\sqrt{K}}.
\end{equation}
By the Davis-Kahan theorem and \eqref{eq:sig_min_X}, we find
\begin{equation}\label{eq:vbound}
     \|\hatV - V W_V \|_F 
    \leq \frac{C  \| \Xi  \|_F}{\sigma_{\min}(U^{(1,2)})}
    \leq \tilC_{\tau} \frac{K_1K_2}{\sqrt{K}} ( \calE_{0,n}^{(1)}  + \calE_{0,n}^{(2)}).
\end{equation}
Hence, by \eqref{eq:e_i_Xi}, with high probability, as $n \to \infty$, 
\begin{equation}  \label{eq:R_i2}
    R_{i2}  \leq \frac{2\tilC_{\tau}}{c_D}  \| \Xi (i,:) \|  \frac{(K_1K_2)^{3/2}\sqrt{n}}{K} \,  ( \calE_{0,n}^{(1)}  + \calE_{0,n}^{(2)}) = o \left( \| \Xi (i,:)  \|\, \frac{ \sqrt{K_1K_2}\sqrt{n}}{\sqrt{K}}  \right).
\end{equation}
Now, Assumption \ref{assump:balancedclusters} and \eqref{eq:hatD_kk_lower}  yield with high probability, if $n$ is large enough, 
\begin{equation*}   
    R_{i3}  \leq  \| U  \|_{2,\infty}\,  \|\Xi \|  \,  \| \hatD^{-1}  \|   
     \leq  2\, (c_D\, c_o)^{-1}\,  \sqrt{K_1  K_2} \,  \|\Xi \|.      
\end{equation*}
Plugging in $\| \Xi  \|_F$ from \eqref{eq:Xi_F}, we obtain
\begin{equation}  \label{eq:R_i3}
 R_{i3}  \leq    
\frac{2\tilC_{\tau}\,  K_1\,  K_2}{c_D c_o\sqrt{n}}
 \left( \calE_{0,n}^{(1)}  + \calE_{0,n}^{(2)}  \right).
\end{equation}
Similarly, with high probability, as $n \to \infty$, 
\begin{equation} \label{eq:R_i4}
    R_{i4}      \leq  \| U  \|_{2,\infty}    \|\Xi \|    \| \hatD^{-1}  \|     \|\hatV - V W_V \| =o(\|U\|_{2,\infty}\|\Xi\|\|\hatD^{-1}\|),
\end{equation}
which is upper-bounded in \eqref{eq:R_i3}. 
Finally,
\begin{equation}  \label{eq:R_i5}
    R_{i5} \leq C  \| U  \|_{2,\infty}  \| \hatU - U W_U \|^2.
\end{equation}
The bound in Equation~\eqref{eq:vbound} applies to $\hat{U}-UW_U$ as well, so we have
\begin{equation} \label{eq:R_i5_1}
    R_{i5} \leq C \frac{\sqrt{K}}{c_o\sqrt{n}} \tilde{C}_\tau^2\frac{(K_1 K_2)^2}{K}(\calE_{0,n}^{(1)}+\calE_{1,n}^{(2)})^2
\end{equation}
Since $\calE_{0,n}^{(v)}=o(1)$ by assumption of the theorem, this upper bound is much smaller than that of Equation~\eqref{eq:R_i3}.
Hence, for $n$ large enough and $i \in [n]$, combination of \eqref{eq:R_i}--\eqref{eq:R_i5_1}   yields
\begin{equation} \label{eq:row_err_i} 
\| e_i^\top (\hatU - U W_U) \| \leq  C_{\tau}\,  \left(   \| \Xi (i,:)  \|\, \frac{\sqrt{K_1K_2}\sqrt{n}}{\sqrt{K}}   +  
\frac{\sqrt{K_1K_2}}{\sqrt{n}} \, (\calE_{0,n}^{(1)}+\calE_{1,n}^{(2)})
\right),
\end{equation}
where the second term in \eqref{eq:row_err_i}  is asymptotically smaller than $\sqrt{K}/\sqrt{n}$.

Thetefore, validity of inequality \eqref{eq:clust_cond_row_i} relies entirely on the behavior of $\|\Xi (i,:)\|$.
Observe that, by \eqref{eq:Xi_decomposition}, one has
\begin{align*}
     \Xi (i,:)     
    & = \left(U^{(1)}(i,:)W_U^{(1)}\right)\tkr \Xi^{(2)}(i,:) + \Xi^{(1)}(i,:)\tkr \hatU^{(2)}(i,:).  
\end{align*}
Hence,  due to Assumptions~\ref{assump:balancedclusters} and \ref{assump:incoherentUhats}, 
\begin{equation}  \label{eq:Xi_row_norm}
     \| \Xi (i,:)  \|     \leq  
\frac{C_{\tau} \,\sqrt{ K_2}}{\sqrt{n}}  \| \Xi^{(1)}(i,:)  \| + \frac{\sqrt{K_1}}{c_o^{(1)}\sqrt{n}}  \| \Xi^{(2)}(i,:)  \|. 
\end{equation} 
Then with high probability for large $n$,
\begin{equation*}
    \|\Xi(i,:)\|\leq C_\tau  \left( 
    \frac{C_{\tau} \,\sqrt{ K_2}}{\sqrt{n}}\,  \calE_{2,\infty,n}^{(1)} + \frac{\sqrt{K_1}}{c_o^{(1)}\sqrt{n}}  \calE_{2,\infty,n}^{(2)} 
    \right).
\end{equation*}
Thus, assumption of Theorem~\ref{th:perfect_U}   implies that,
for $n$ large enough and any absolute constant $\Cksmall$
\begin{equation} \label{eq:xi_i_bound}
\|\Xi(i,:)\| \leq   \frac{\sqrt{K_1 K_2}}{n} \, \Cksmall,
\end{equation}
where $\Cksmall$ is a small positive constant to be chosen later.
%
From Equations~\eqref{eq:row_err_i}  and \eqref{eq:xi_i_bound}, obtain that 
\begin{align}
\|e_i^\top(\hatU-UW_U)\| &\leq  C_{\tau}\,  \left(   \| \Xi (i,:)  \|\, \frac{\sqrt{K_1K_2}\sqrt{n}}{\sqrt{K}}   +  
\frac{\sqrt{K_1K_2}}{\sqrt{n}} \, (\calE_{0,n}^{(1)}+\calE_{1,n}^{(2)})
\right) \notag \\
 &\leq  C_{\tau}\, \frac{\sqrt{K}}{\sqrt{n}} \, \left[( ]
\frac{K_1 K_2}{K}\,  \Cksmall + \frac{\sqrt{K_1 K_2}}{\sqrt{K}}\, \left(\calE_{0,n}^{(1)}+\calE_{0,n}^{(2)}\right) 
 \right] \notag \\
  &\leq \frac{\sqrt{K}}{2\sqrt{2}C_o\sqrt{n}}, \label{eq:u_i_bound}
\end{align}
once $n$ is sufficiently large, and assuming a suitably small choice of $\Cksmall$. This implies that condition \eqref{eq:clust_cond_row_i} holds for any  $i \in [n]$, so Theorem~\ref{th:perfect_U} has been proved.\\

\medskip 

\noindent
Now, we finish the proof of Theorem~\ref{th:consistent_U}. 
Let
\begin{equation} \label{eq:calJ_v_n}
    \calJ_n^{(v)} \equiv \calJ_n^{(v)}  ( C_{\text{small}} ) = \left\{i\in [n] : \|\Xi^{(v)}(i,:)\| > \frac{C_{\text{small}}\sqrt{K_v}}{\sqrt{n}} \right\}, \quad v=1,2,
\end{equation}
and 
\begin{equation}
    \calE^{(v)}_{n} = |\calJ_n^{(v)}|, \quad v=1,2.
\end{equation}
If $i \in (\calJ_n^{(1)})^c \cap (\calJ_n^{(2)})^c$, then by \eqref{eq:Xi_row_norm}, the bound \eqref{eq:xi_i_bound}  holds, and row $i$ is clustered correctly by condition \eqref{eq:clust_cond_row_i} and the inequality \eqref{eq:u_i_bound}. Denote
\begin{equation*}
    \calJ_n = \calJ_n^{(1)} \cup \calJ_n^{(2)}, 
    \quad \calJ_n^c = (\calJ_n^{(1)})^c \cap (\calJ_n^{(2)})^c
\end{equation*}
Then,
\begin{equation}  \label{eq:calE_n}
    \calE_{n}  = |\calJ_n| \leq \calE^{(1)}_{n} + \calE^{(2)}_{n}. 
\end{equation}
By Assumption \ref{assump:consistentUs},
\begin{equation}
    \PP \left\{  \| \Xi^{(v)} \|^2_F \leq C_{\tau} K_v (\calE^{(v)}_{0,n})^2\right\} 
    \geq 1 - n^{-\tau}.
    \label{eq:oct20-62}
\end{equation}
and by the conditions of Theorem~~\ref{th:consistent_U}  and Assumption~\ref{assump:finiteclusternumbers}, one has 
\begin{equation}
    K_v (\calE^{(v)}_{on})^2 = o(1), \quad \text{as } n \to \infty.
    \label{eq:oct20-63}
\end{equation}
Then, \eqref{eq:calJ_v_n} yields
\begin{equation}  \label{eq:Xi_v_Fnorm}
     \| \Xi^{(v)}  \|^2_F      = \sum_{i=1}^n  \| \Xi^{(v)}(i,:)  \|^2   
    \geq \sum_{i\in \calJ_n^{(v)}} \| \Xi^{(v)}(i,:)  \|^2 \\ 
     \geq \frac{K_v}{n} |\calJ_n^{(v)}| C_{\text{small}}^2.
\end{equation}
Hence, \eqref{eq:oct20-62}, \eqref{eq:oct20-63} and \eqref{eq:Xi_v_Fnorm}   imply that 
\begin{equation*}
\PP \left\{ \frac{\calE_n^{(v)}}{n} \leq \frac{C_{\tau}}{ C_{\text{small}}^2} (\calE^{(v)}_{0,n})^2 \right\} 
    \geq 1 - n^{-\tau}.
\end{equation*}
Therefore, due to \eqref{eq:calE_n},
\begin{equation}
    \mathbb{P}\left\{ \frac{\calE_n}{n} \leq  \frac{C_{\tau}}{C_{\text{small}}^2} 
\left[(\calE^{(1)}_{on})^2 + (\calE^{(2)}_{on})^2 \right]\right\} 
    \geq 1 - 2n^{-\tau}.
\end{equation}
Hence, $\calE_n/n \to 0$ as $n\to \infty$ with high probability, and clustering is consistent.

\subsection{Proof of Theorem~\ref{th:cl_number_Z} }
\label{sec:proof_Th4}

Let, as in the proof of Theorem~\ref{th:number_clust_errors},
 $\hatX = \hatZ^{(1,2)} = \hatZ^{(1)} \tkr \hatZ^{(2)}$ and $X = Z H^\top$.
Then, it follows from \eqref{eq:sigK1} and \eqref{eq:Frob1} that
$$
\sigma_K(X)   \geq \frac{c_o\sqrt{n}}{\sqrt{K}}, \quad 
\|\hatX - X\|  \leq \sqrt{2(\calE_{1,n} + \calE_{2,n})}.
$$
Observe that, by \eqref{eq:sig_K_hatX}, one has 
\begin{equation}  \label{eq:sig_K_hatX1}
\sigma_K(\hatX)  \geq \frac{c_o\sqrt{n}}{\sqrt{K}} 
\left(1 - \frac{\sqrt{2K}}{c_o} \frac{\sqrt{\calE_{1n} + \calE_{2n}}}{\sqrt{n}}\right).
\end{equation}
In addition, for $1\leq j \leq K_1K_2 - K$,
\begin{equation} \label{eq:sig_K_hatX2}
    \sigma_{K+j}(\hatX) \leq \sigma_{K+j}(X) +  \|\hatX-X  \|  
    \leq \frac{c_o\sqrt{n}}{\sqrt{K}} \frac{\sqrt{2K}}{c_o}\frac{\sqrt{\calE_{1n}+\calE_{2n}}}{\sqrt{n}}.     
\end{equation}
Then, \eqref{eq:cl_num_Z1} and \eqref{eq:cl_num_Z2} follow directly from
\eqref{eq:sig_K_hatX1} and \eqref{eq:sig_K_hatX2}.
If clustering is consistent and Assumptions \ref{assump:balancedclusters}, \ref{assump:finiteclusternumbers}, \ref{assump:consistentZs} hold, then
$K=O(1)$,  $n^{-1}\, K\, (\calE_{1n}+\calE_{2n})=o(1)$, so the theorem is valid.


\subsection{Proof of Theorem~\ref{th:cl_number_U} }
\label{sec:proof_Th5}

Let, as in the proof of Theorems~\ref{th:consistent_U}~and~\ref{th:perfect_U},
\begin{equation*}
X = U^{(1,2)}W_U^{(1,2)} = (U^{(1)} \tkr U^{(2)} )W_U^{(1,2)}  =  U D H^\top W_U^{(1,2)}, \quad
\hatX = \hatU^{(1,2)} = \hatU^{(1)} \tkr \hatU^{(2)}
\end{equation*}
and $\Xi = \hatX - X$. Recall that by \eqref{eq:sig_min_X}, 
\begin{equation*}
    \sigma_K(X) \geq  \frac{c_D\, \sqrt{K}}{\sqrt{K_1  K_2\, n}}, \quad  \sigma_{K+1}(X)=0.
\end{equation*}
In addition, it follows from \eqref{eq:hatD_kk_lower0} that, under Assumptions \ref{assump:balancedclusters}, \ref{assump:finiteclusternumbers}, \ref{assump:consistentUs}, and \ref{assump:incoherentUhats},
with high probability
\begin{equation*}
    \sigma_K(\hatX)   \geq  \frac{ c_D\, \sqrt{K}}{\sqrt{K_1 K_2} \sqrt{n}} \left(1 -  
\frac{\tilC_\tau K_1 K_2  ( \calE_{0,n}^{(1)}  + \calE_{0,n}^{(2)})}{c_D\, \sqrt{K}} \right).
\end{equation*}
Moreover, by \eqref{eq:Xi_F}, one has
\begin{equation*}
\| \Xi  \|_F \leq \frac{\tilC_{\tau}\, \sqrt{K_1} \sqrt{K_2}}{\sqrt{n}}
 \left( \calE_{0,n}^{(1)}  + \calE_{0,n}^{(2)}  \right).
\end{equation*}
Then, validity of the theorem follows from the fact that 
$\sigma_{K+j}(\hatX) \leq  \| \Xi  \|$ for  $j \geq 1$.

\subsection{Proof of Hierarchical Clustering results}

In this section, we prove that complete-linkage hierarchical clustering applied to the rows of $\hatU = \SVD_K(\hatU^{(1)} \tkr \hatU^{(2)})$ yields perfect clustering under the conditions of Theorem~\ref{th:perfect_U}, and that the merge heights exhibit an elbow at step $n-K+2$. Algorithm~\ref{alg:HC} shows the complete-linkage hierarchical clustering procedure.

\textbf{Proof of Theorem \ref{th:perfect_hc}:} Let us first describe the strategy to the proof. In hierarchical clustering with complete linkage, we begin with $n$ clusters each containing a single point, and at each merge, the number of clusters decreases by 1. Suppose the radius of the set of points belonging to each true cluster is less than the separation between true clusters. Then when points belong to the same true cluster, we expect the merge heights to be small, and only once all of the possible within-cluster merges have occurred will a merge bring together points that belong to distinct true clusters. This will happen at step $n-K+1$, since at that step there will be $n-(n-K+1)=K-1$ clusters, necessitating that one contains points from multiple true clusters, and we will see a jump in the merge heights at this step. Now let us prove these claims.

Let $\calG^{(t)}$ denote the partition obtained after $t$ merges, and let $G_i^{(t)}$ denote the $i$-th cluster in this partition. When merging two clusters, there are two cases. First, if all of the points in the two clusters $G_i^{(t-1)}$, $G_j^{(t-1)}$ belong to the same true cluster, then for any $a \in G_i^{(t-1)}$ and $b \in G_j^{(t-1)}$ we have $z(a)=z(b)$. Hence, $e_a^\top U W_U = e_b^\top U W_U$, so
\begin{equation}
    \|(e_a^\top - e_b^\top)\hatU\| \leq \|e_a^\top (\hatU - UW_U)\| + \|e_b^\top (\hatU - UW_U)\| 
\leq
2\|\hatU - U W_U\|_{2,\infty}
\end{equation}

\begin{equation}
    d(G_i^{(t-1)}, G_j^{(t-1)}) = \max \{ \| \hatU(a,:) - \hatU(b,:) \| : a \in G_i^{(t-1)},\, b \in G_j^{(t-1)} \} \leq 2\|\hatU - U W_U\|_{2,\infty}
\end{equation}

Second, if points in $G_i^{(t-1)}\cup G_j^{(t-1)}$ belong to at least two different true clusters, then there are some indices $a$ and $b$ in this union such that $z(a)\neq z(b)$. We have

\begin{equation}
    \begin{aligned}
    \|(e_a^\top - e_b^\top)\hatU\| &\geq \|(e_a^\top - e_b^\top) U W_U\| - \|(e_a^\top- e_b^\top)(\hatU - U W_U)\| \\
&\geq \|(e_a^\top - e_b^\top) U W_U\| - 2\|\hatU - U W_U\|_{2,\infty}
    \end{aligned}
\end{equation}

Letting $s = \displaystyle \min_{z(i) \neq z(j)} \|U(i,:) - U(j,:)\|$, we have 
\begin{equation}
    \|(e_a^\top - e_b^\top)\hatU\| \geq s - 2\|\hatU - U W_U\|_{2,\infty}
\end{equation}
Equation \eqref{eq:s_lower_Bound} states that $s\geq \frac{\sqrt{2K}}{C_o\sqrt{n}}$. On the other hand, if we fix $\eta\in(0,1/6)$, then choosing an even smaller constant $\Cksmall$ in Equation~\eqref{eq:xi_i_bound} if necessary, we can obtain the following upper bound from Equation~\eqref{eq:u_i_bound}, which holds with high probability for large $n$:
\begin{equation} \label{eq:compareU2inf_to_s}
    \|\hatU-UW_U\|_{2,\infty} \leq \eta\frac{\sqrt{2K}}{C_o\sqrt{n}}\leq \eta s.
\end{equation}

This implies all of the initial merges are within-cluster, with merge heights $h(t) \leq 2\|\hatU - U W_U\|_{2,\infty}$. At the $t$-th stage of the algorithm, there are $n-t+1$ clusters. Therefore, at the $n-K+1$ step, there are exactly $K$ distinct clusters, and 
\begin{equation} \label{eq:initialheights}
h(n-K+1) \leq 2\|\hatU - U W_U\|_{2,\infty}.
\end{equation}
So $G^{(n-K+1)}$ gives the exact cluster labels. This completes the proof of Theorem~\ref{th:perfect_hc}.\\

When we have access to $\hatU$ from Algorithm~\ref{alg:KRAFTY}, we see that for subsequent steps $t \geq n-K+2$, any merge must involve points from different true clusters, yielding
\begin{equation}\label{eq:laterheights}
h(t) \geq s - 2\|\hatU - U W_U\|_{2,\infty}.
\end{equation}
Hence, a gap occurs in the merge heights at step $n-K+2$, which separates within-cluster merges from between-cluster merges and ensures exact recovery. Combining Equations~\eqref{eq:compareU2inf_to_s}, \eqref{eq:initialheights}, and \eqref{eq:laterheights}, we have
\begin{align*}
h(n-K+2)-h(n-K+1)&\geq (1-2\eta)\frac{\sqrt{2K}}{C_o\sqrt{n}}\\
h(k+1)-h(k)&\leq 2\eta \frac{\sqrt{2K}}{C_o\sqrt{n}}
\end{align*}
for $k\leq n-K$. So there is an elbow in the merge heights when we apply Algorithm~\ref{alg:HC} to $\hatU$, but this requires knowledge of $K$ in order to choose the dimension of $\hatU$. To address this issue, we consider applying Algorithm~\ref{alg:HC} to $\hatU^{(1,2)}$ directly. This leads to Theorem~\ref{th:cluster_number_hc}, which we now prove.
\\

\textbf{Proof of Theorem~\ref{th:cluster_number_hc}:} Let $s=\min_{z(i)\neq z(j)} \|U^{(1,2)}(i,:)-U^{(1,2)}(j,:)\|$. By Assumption~\ref{assump:balancedclusters}, we have
\begin{equation} \label{eq:u12_s_lb}
    s= \min_{z(i)\neq z(j)} \left(\frac{1}{n_{z_1(i)}^{(1)} n_{z_2(i)}^{(2)}} + \frac{1}{n_{z_1(j)}^{(1)}n_{z_2(j)}^{(2)}}\right)^{1/2} \geq \frac{\sqrt{2K_1 K_2}}{c_o^{(1)}c_o^{(2)} n}.
\end{equation}
Recall that $\Xi=\hatU^{(1,2)}-U^{(1,2)}W_U^{(1,2)}$, and for large $n$, Equation~\eqref{eq:xi_i_bound} provides the high-probability bound
\begin{equation*}
\|\Xi\|_{2,\infty} \leq   \frac{\sqrt{K_1 K_2}}{n} \, \Cksmall.
\end{equation*}
Let $\eta\in(0,1/6)$. Then choosing a smaller value of $\Cksmall$ if necessary, we can ensure that for large $n$, with high probability, 
\begin{equation*}
    \|\Xi\|_{2,\infty} \leq \eta \frac{\sqrt{2K_1 K_2}}{c_o^{(1)}c_o^{(2)} n}.
\end{equation*}
Putting this together, the first $n-K+1$ merges are within-cluster, with merge heights 
\begin{equation*}
    h(t)\leq 2\|\Xi\|_{2,\infty} \leq 2\eta \frac{\sqrt{2K_1 K_2}}{c_o^{(1)}c_o^{(2)}n},
\end{equation*}
and all subsequent merges involve points from different true clusters, which must have heights
\begin{equation*}
    h(t)\geq s-2\|\Xi\|_{2,\infty}\geq (1-2\eta)\frac{\sqrt{2K_1K_2}}{c_o^{(1)}c_o^{(2)}n}.
\end{equation*}
These inequalities together prove Theorem~\ref{th:cluster_number_hc}.
\\

Figure~\ref{fig:hc_mh} illustrates the idea that a gap occurs in the merge heights at step $n-K+2$. In both subfigures ($n=1000$ and $K=5$), a clear jump is observed between steps 996 and 997.

\begin{figure}[H]
    \centering   
    \begin{minipage}[b]{0.45\linewidth}
        \centering
        \includegraphics[width=\linewidth]{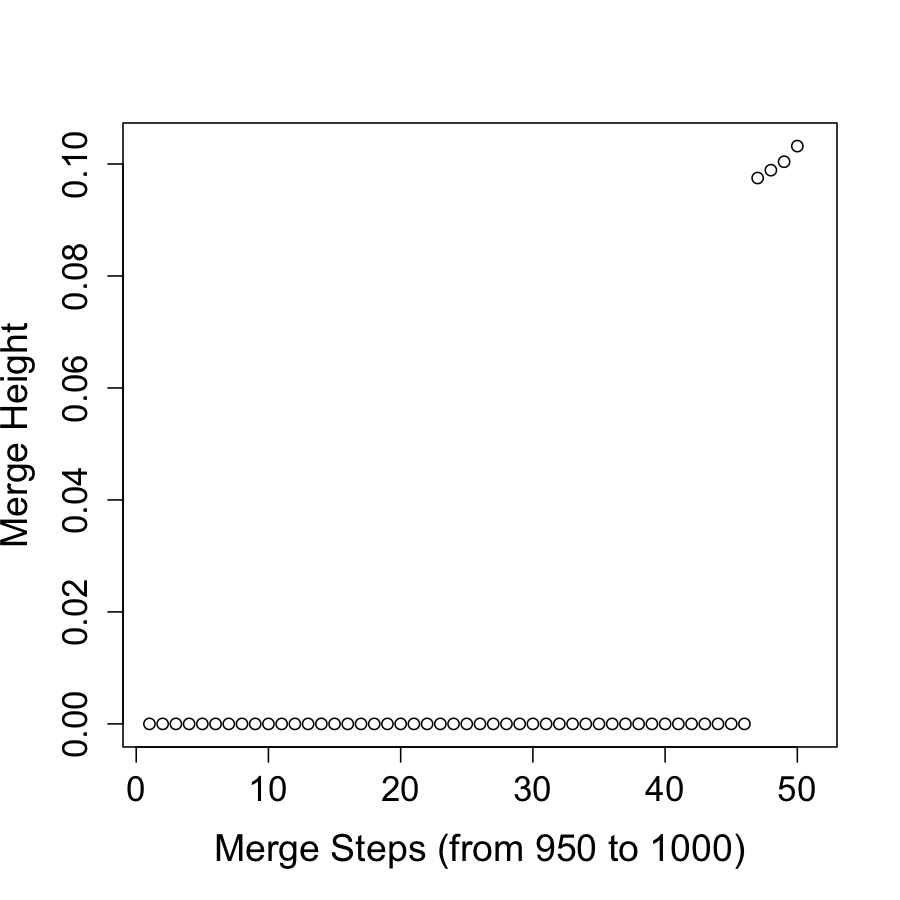}
        \vspace{3pt}
        \centerline{\small (a) Input: $SVD_K (\hatZ^{(1,2)}) $.}
        \label{fig:hc_z}
    \end{minipage}
    \hfill 
    \begin{minipage}[b]{0.45\linewidth}
        \centering
        \includegraphics[width=\linewidth]{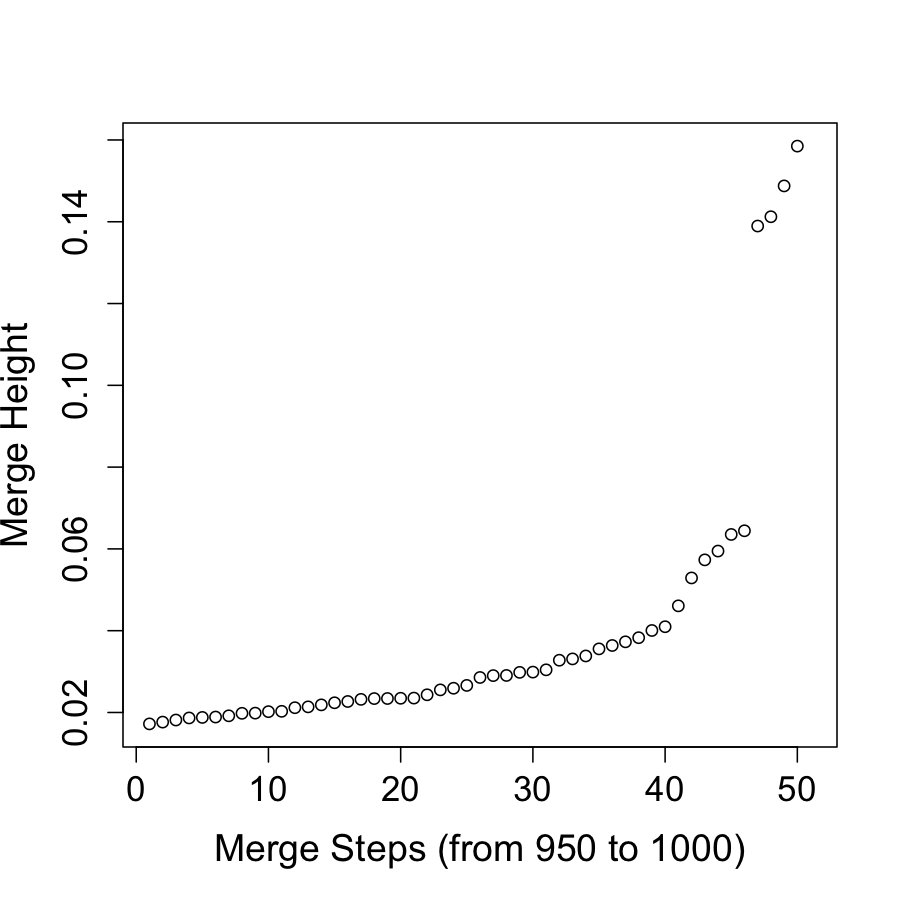}
        \vspace{3pt}
        \centerline{\small (b) Input: $SVD_K (\hatU^{(1,2)}) $ .}
        \label{fig:hc_u}
    \end{minipage}
    
    \caption{Merge height plot for complete-linkage hierarchical clustering with $n=1000$, $K=5$, $K_1=K_2=4$, $m=30$, and $\sigma^2=0.25$.}
    \label{fig:hc_mh}
\end{figure}
\subsection{Properties of the Khatri-Rao product and their proofs}
\label{sec:KR_properties}

\begin{itemize}
    \item[\textbf{P1}.\  ]   $(AC)\tkr(BD) = (A\tkr B)(C \otimes D )$.
 
     
    \item[\textbf{P2}.\  ] $\|A \tkr B\|_F \leq \min\bigl( \|A\|_F \|B\|_{2,\infty}, \|A\|_{2,\infty} \|B\|_F \bigr)$.

    \textbf{Proof:}
      \begin{align*}
      \|A \tkr B\|_F^2 &= \sum_{i=1}^n \|A(i, :) \otimes B(i, :)\|_2^2 \\
      &= \sum_{i=1}^n [A(i, :) \otimes B(i, :)][A(i, :) \otimes B(i, :)]^\top \\
      &= \sum_{i=1}^n \left[ A(i, :) A(i, :)^\top \right] \otimes  \left[ B(i, :) B(i, :)^\top \right] \\
      &\leq \max_j \|A(j, :)\|^2 \sum_{i=1}^n \|B(i, :)\|^2; \text{ or} \\
      &\leq \max_j \|B(j, :)\|^2 \sum_{i=1}^n \|A(i, :)\|^2.
    \end{align*}
 

    \item[\textbf{P3}.\  ] $\|A \tkr B\|_{2,\infty} \leq \|A\|_{2,\infty} \|B\|_{2,\infty}$.

      \textbf{Proof:}
      \begin{align*}
      \|A \tkr B\|_{2,\infty}^2 &= \max_i \|A(i, :) \otimes B(i, :)\|_2^2 \\
      &= \max_i \|A(i, :)\|^2 \cdot \|B(i, :)\|^2 \\
      &\leq \max_i \|A(i, :)\|^2 \cdot \max_v \|B(v, :)\|^2.
      \end{align*}
  

    \item[\textbf{P4}.\  ] $(A \tkr B)^\top = A^\top * B^\top$, where $A^\top * B^\top$ is the Khatri–Rao (columnwise) product.


    \item[\textbf{P5}.\  ] $(A \tkr B)(A^\top * B^\top) = (AA^\top) \circ (BB^\top)$
    where $(X_1 \circ X_2)$ is the Hadamard product of $X_1$ and $X_2$.
 

    \item[\textbf{P6}.\  ] $\|A \tkr B\| \leq \|A \otimes B\| = \|A\| \|B\|$.
 

    \item[\textbf{P7}.\  ] \label{itm:P7} If $Z^{(v)} \in \{0,1\}^{n \times K_v}$, $v = 1, 2$, are clustering matrices, then  
      $Z^{(1)} \tkr Z^{(2)} \in \{0,1\}^{n \times K_1 K_2}$ and       
      $\left( Z^{(1)} \tkr Z^{(2)} \right)^\top \bigl( Z^{(1)} \tkr Z^{(2)} \bigr) = \mathcal{D}_Z^{(1,2)}$
      is a diagonal matrix  with elements      
      $$
       \calD_{Z,(k_1,k_2)}^{(1,2)} = \sum_{i=1}^n I \left( i \in \calG_{k_1}^{(1)} \right) \cdot I \left( i \in \calG_{k_2}^{(2)} \right) = 
         n_{(k_1,k_2)}^{(1,2)},
      $$      
      where $\calG_{k_v}^{(v)}$, $v=1,2$, is the group $k_v$ according to clustering $v \in \{1,2\}$, $k_v \in [K_v]$.
    
      \textbf{Proof:}
      Consider $\calL = \left( Z^{(1)} \tkr Z^{(2)} \right)^\top \left( Z^{(1)} \tkr Z^{(2)} \right) \in \RR^{K_1 K_2 \times K_1 K_2}$.
      Then, using double-indexing, one can write
      \begin{align*}
      \calL ((k_1, k_2), (\tilde{k}_1, \tilde{k}_2))  
      & = \sum_{i=1}^n \left( Z^{(1)} \tkr Z^{(2)} \right) (i,(k_1,k_2)) \, \left( Z^{(1)} \tkr Z^{(2)} \right) (i,(\tilde{k}_1,\tilde{k}_2)) \\
      & = \sum_{i=1}^n I\bigl( i \in \calG_{k_1}^{(1)} \bigr)\, I\bigl( i \in\calG_{k_2}^{(2)} \bigr)\, I\bigl( i \in\calG_{\tilde{k}_1}^{(1)} \bigr)
      \, I\bigl( i \in\calG_{\tilde{k}_2}^{(2)} \bigr) \\
      & = \sum_{i=1}^n I\bigl( i \in\calG_{k_1}^{(1)} \bigr)\, I\bigl( l \in\calG_{k_2}^{(2)} \bigr) \, I(k_1 = \tilde{k}_1,\, k_2 = \tilde{k}_2) \\
      & =\ \  n_{(k_1,k_2)}^{(1,2)} 
      \end{align*}

\end{itemize}

\section{Visual Comparison of $\hatU^{(v)}$ and $\hatZ^{(v)}$ Inputs}
\label{KRAFTY-visualization}

\begin{figure}[H]
    \centering
    \begin{minipage}{\linewidth} 
    
        \begin{subfigure}[b]{0.25\linewidth}
            \centering
            \includegraphics[width=\linewidth]{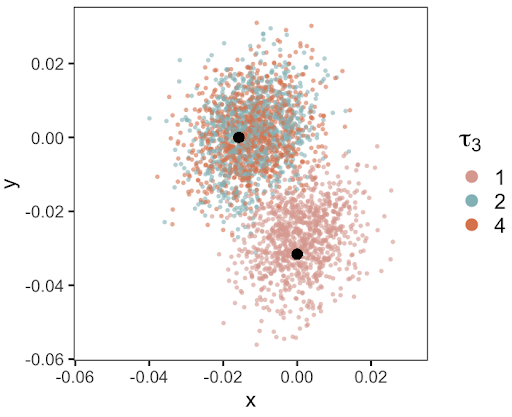} 
            \caption{$\hatU^{(1)}$}
            \label{fig:panel1}
        \end{subfigure}
        \hfill 
        \begin{subfigure}[b]{0.25\linewidth}
            \centering
            \includegraphics[width=\linewidth]{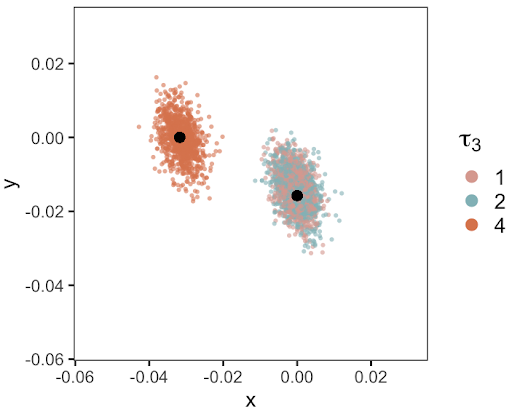}
            \caption{$\hatU^{(2)}$}
            \label{fig:panel2}
        \end{subfigure}
        \hfill
        \begin{subfigure}[b]{0.48\linewidth}
            \centering
            \includegraphics[width=\linewidth]{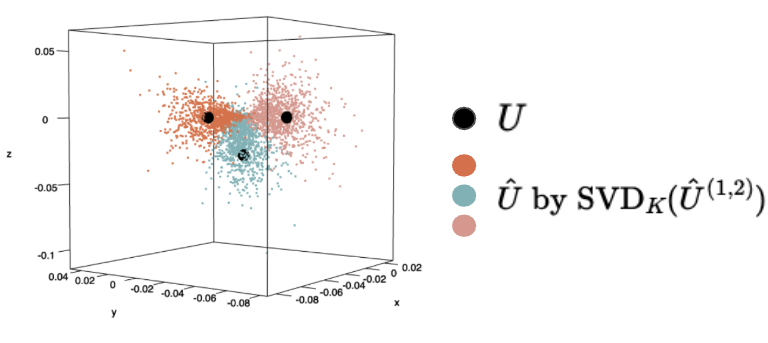}
            \caption{$\hatU$ obtained by $SVD
            _K (\hatU^{(1,2)}) $}
            \label{fig:panel3}
        \end{subfigure}
        
        \vspace{0.25cm} 
        
        \begin{subfigure}[b]{0.25\linewidth}
            \centering
            \includegraphics[width=\linewidth]{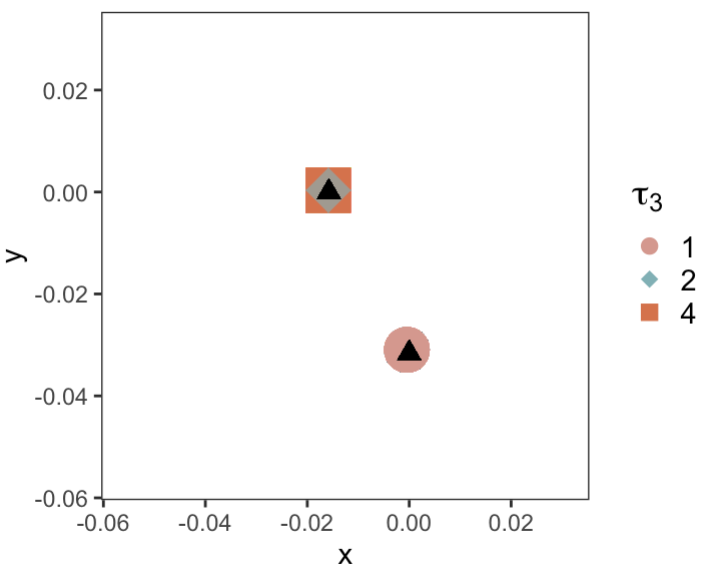}
            \caption{$\hatZ^{(1)}$}
            \label{fig:panel4}
        \end{subfigure}
        \hfill
        \begin{subfigure}[b]{0.25\linewidth}
            \centering
            \includegraphics[width=\linewidth]{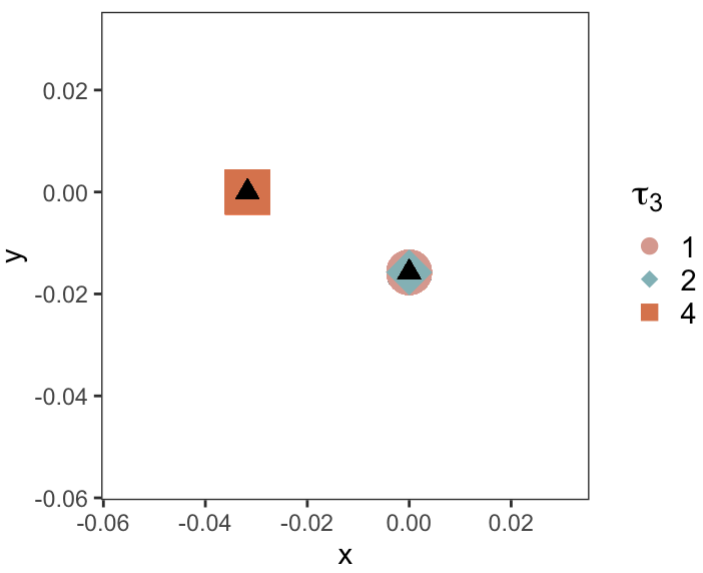}
            \caption{$\hatZ^{(2)}$}
            \label{fig:panel5}
        \end{subfigure}
        \hfill
        \begin{subfigure}[b]{0.48\linewidth}
            \centering
            \includegraphics[width=\linewidth]{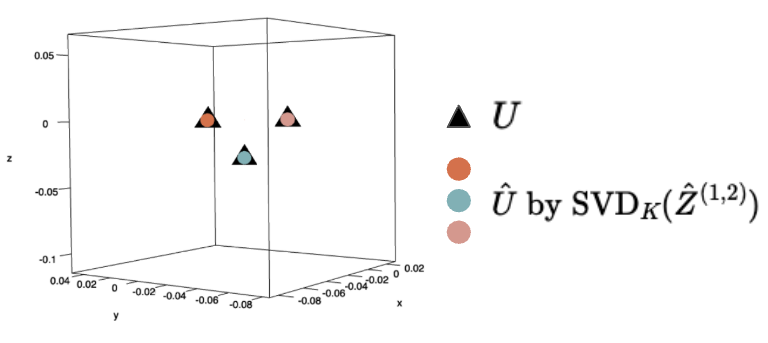}
            \caption{$\hatU$ obtained by $SVD
            _K (\hatZ^{(1,2)}) $}
            \label{fig:panel6}
        \end{subfigure}
        
        \caption{Illustration of KRAFTY using simulated Gaussian mixtures with dimension $p=3$ and $K=3$ joint clusters. (a) $\hatU^{(1)}$ from View 1; (b) $\hatU^{(2)}$ from View 2; (c) left singular vectors from the SVD of $\hatU^{(1)} \tkr \hatU^{(2)}$. Bottom row: (d) $\hatZ^{(1)}$ from View 1, scaled by the square root of singular values, with clustering obtained by $k$-means on $\hatU^{(1)}$; (e) analogous results for $\hatZ^{(2)}$ from View 2 using $\hatU^{(2)}$; (f) left singular vectors from the SVD of $\hatZ^{(1)} \tkr \hatZ^{(2)}$. Points are colored by ground truth membership; black markers indicate the true cluster centers. We can observe that while three joint clusters exist, only two are distinguishable in each single view due to overlap. The figure conveys the idea that both $\hatU^{(v)}$ and $\hatZ^{(v)}$ target the same true centers; however, $\hatZ^{(v)}$ collapses points toward centroids, while $\hatU^{(v)}$ preserves the variance 'cloud.' The resulting $\hatU$ from either $ \hatU^{(v)}$ inputs or $ \hatZ^{(v)}$ inputs estimates the same $U$ for the joint view. The joint space separates all clusters into orthogonal dimensions.}
        \label{fig:KR-algo2}
        
    \end{minipage}
\end{figure}


\section{Additional Simulation Results}
This section presents extended simulation results across three settings: varying the number of joint clusters, signal-to-noise ratios, and single-view cluster numbers. 

\subsection{Varying Number of Joint Clusters}
Figure~\ref{fig:K_ZU_sigma} compares MASE and KR for three joint cluster sizes $K \in {4, 9, 15}$ across varying noise levels $\sigma^2$ (x-axis). The number of clusters in each view are both 4 ($K_1=K_2=4$), the dimension of the observations is $p=20$, and this value is unchanged in the whole subsection. The top row uses $\hatZ^{(v)}$ as input; the bottom row uses $\hatU^{(v)}$. First, focusing on the results with $\hatZ^{(v)}$ as input, when $K=4 < K_1+K_2$, MASE and KR perform similarly whether $K$ is known or unknown. When $K > K_1 + K_2$ (i.e., $K = 9, 15$), MASE with known $K$ and KR with known or unknown $K$ achieve similar performance, all outperforming MASE with unknown $K$. This supports the finding that KR’s advantage comes from correctly estimating $K$. 

For the $U$ input (bottom row), when $K < K_1 + K_2$, MASE outperforms KR when $K$ is unknown, while the two methods perform similarly when $K$ is known. When $K > K_1 + K_2$, MASE and KR perform similarly when $K$ is known, but if $K$ is unknown, there is a large performance gap between KR and MASE.
\begin{figure}
    \centering
        \begin{minipage}{\linewidth}
            \centering
            \includegraphics[width=\linewidth]{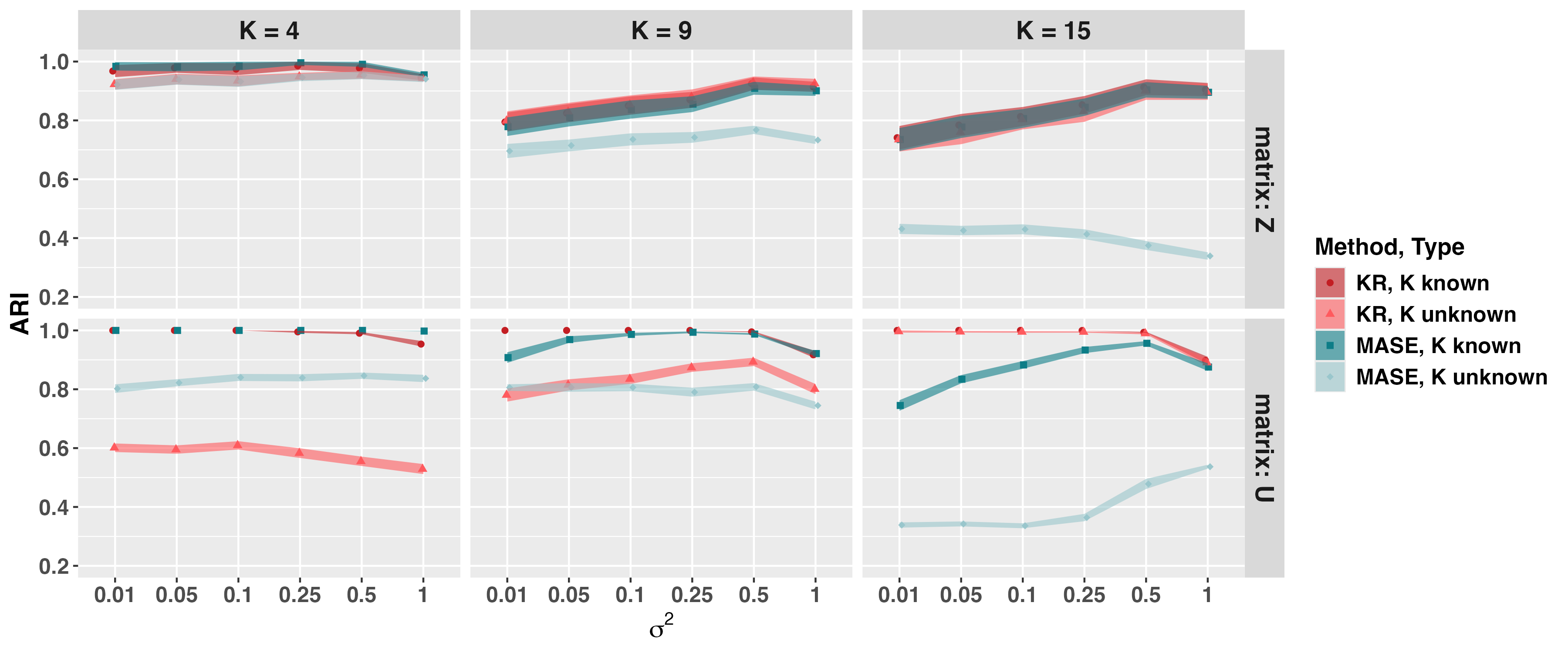}
            \caption{Joint clustering performance of KR and MASE using $\hatZ^{(v)}$ or $\hatU^{(v)}$ matrices, evaluated under known and unknown $K$. The true number of joint clusters $K$ is fixed at 4, 9, and 15, while the noise level $\sigma^2$ varies. ARI compares estimated joint clusters with ground truth. $K_1 = K_2 = 4, p=20.$}
            \label{fig:K_ZU_sigma}
        \end{minipage}   
\end{figure}

Figures~\ref{fig:K_compareZU_K} and~\ref{fig:K_compareZU_estK} investigate how different inputs ($\hatZ^{(v)}$ versus $U^{(v)}$) affect the performance of MASE and KR when the number of joint clusters $K$ is unknown. In Figure~\ref{fig:K_compareZU_K}, joint clustering performance (measured by ARI) is evaluated with noise levels fixed at either 0.01 or 1, while $K$ varies from 4 to 16. We observe that when $K > K_1+K_2$, MASE generally exhibits poor performance. However, under low noise conditions, $\hatZ^{(v)}$ outperforms $\hatU^{(v)}$ as $K$ increases, whereas under high noise conditions, the opposite occurs ($\hatU^{(v)}$ outperforms $\hatZ^{(v)}$). KR's performance with different inputs also varies on different combinations of noise and number of joint clusters. $\hatU^{(v)}$ outperforms $\hatZ^{(v)}$ under low noise conditions, whereas $\hatZ^{(v)}$ outperforms $\hatU^{(v)}$ under high noise conditions when $K > K_1+K_2$. Figure~\ref{fig:K_compareZU_estK} presents a heatmap illustrating performance in estimating $K$ when varying $K$ and $\sigma^2$ simultaneously. The color represents the difference in Mean Absolute Error (MAE) for estimating $K$ between the two inputs (calculated as $\text{MAE}_Z - \text{MAE}_U$). Consequently, cool colors indicate that $\hatZ^{(v)}$ yields better performance (lower error), while warm colors indicate that $\hatU^{(v)}$ is better. The results show that for MASE, $\hatU^{(v)}$ and $\hatZ^{(v)}$ has negligible difference. For KR, $\hatZ^{(v)}$ yields better results at low levels of $K$ and $\sigma^2$; however, this trend reverses as both parameters increase, with $\hatU^{(v)}$ eventually outperforming $\hatZ^{(v)}$.

\begin{figure}
    \centering
        \begin{minipage}{\linewidth}
            \centering
            \includegraphics[width=\linewidth]{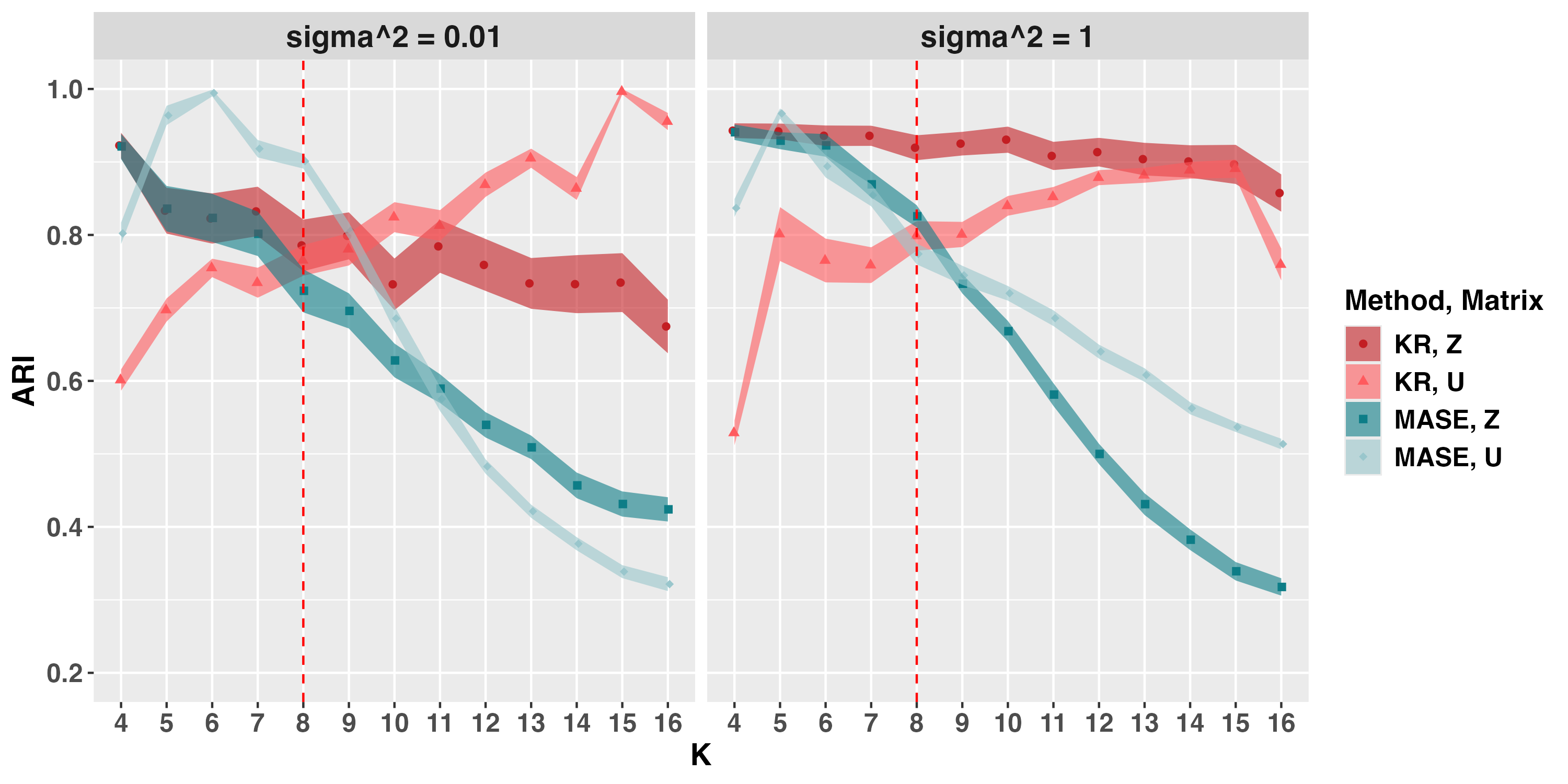}
            \caption{Joint clustering performance across four combinations: KRAFTY with $\hatZ^{(v)}$, KRAFTY with $\hatU^{(v)}$, MASE with $\hatZ^{(v)}$, and MASE with $\hatU^{(v)}$, evaluated under the unknown $K$ setting. The noise level $\sigma^2$ is set to either 0.01 or 1. ARI compares estimated joint clusters with ground truth. $K_1 = K_2 = 4, p=20.$}
            \label{fig:K_compareZU_K}
        \end{minipage}   
\end{figure}


\begin{figure}
    \centering
        \begin{minipage}{\linewidth}
            \centering
            \includegraphics[width=\linewidth]{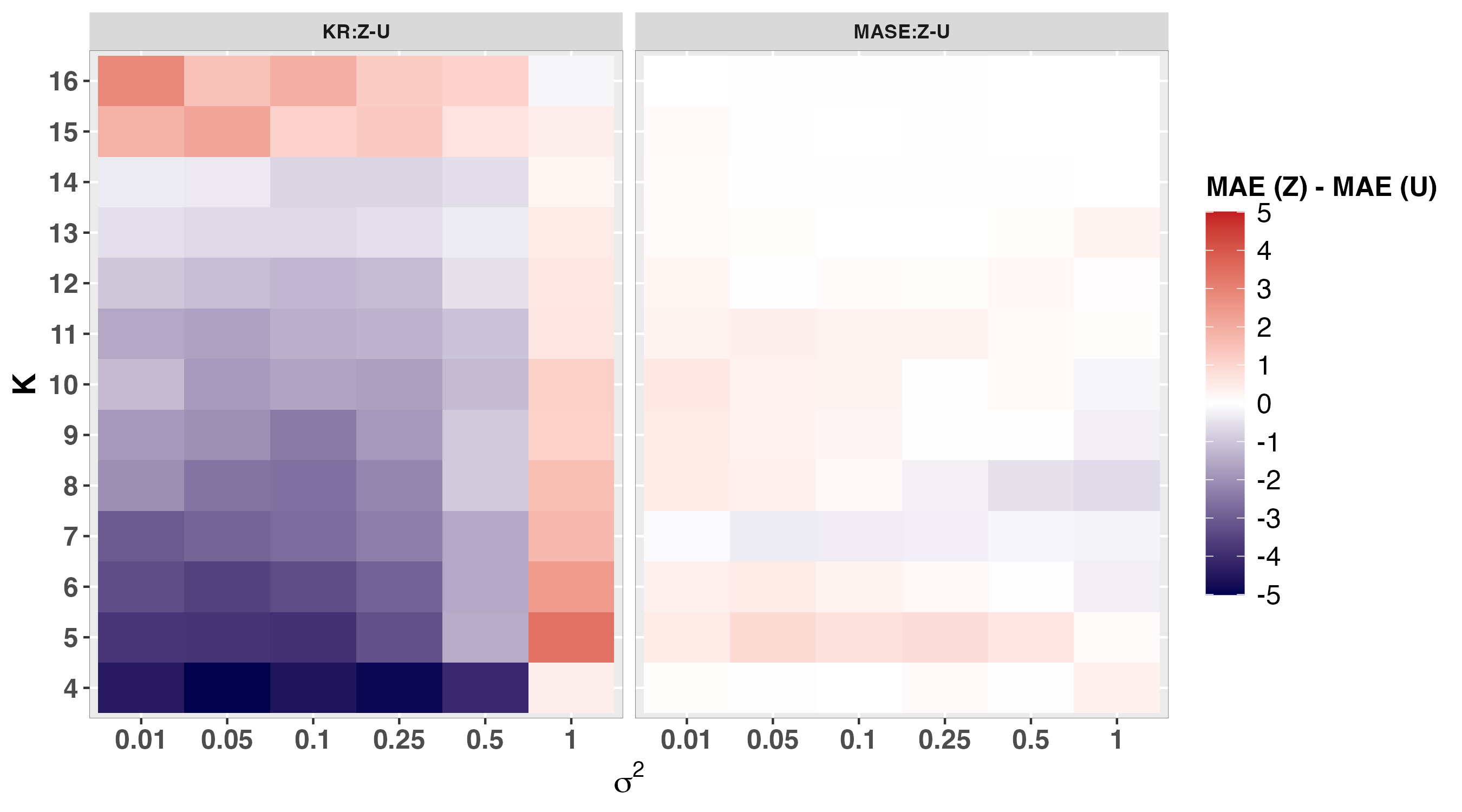}
            \caption{Comparison of joint cluster number estimation performance between Z and U across KR and MASE methods. Heatmap colors represent the difference in mean absolute error (averaged over 100 repetitions) when estimating $K$ using Z versus U. \textbf{Cool colors $\to$ Z better; warm colors $\to$ U better.} $K_1 = K_2 = 4, p=20.$}
            \label{fig:K_compareZU_estK}
        \end{minipage}  
\end{figure}


\subsection{Varying Signal-to-Noise Ratios}
Figure~\ref{fig:m_diffARI_method} presents heatmaps illustrating how the performance of KR and MASE changes as dimensions and noise levels vary. The color scale represents the difference in ARI ($\text{ARI}_{\text{KR}} - \text{ARI}_{\text{MASE}}$); thus, cool colors indicate that MASE performs better, while warm colors indicate that KR performs better. In the known $K$ setting (first two columns), inputs $\hatZ^{(v)}$ yield similar results for both methods regardless the change of dimensions and noise levels. For inputs $\hatU^{(v)}$, MASE is preferred when $K < K_1+K_2$, while KR performs better when $K > K_1+K_2$ under high signal-to-noise ratios. In the unknown $K$ setting, for $\hatU^{(v)}$, MASE is preferred when $K$ is small. When $K=12$ and signal-to-noise ratios are high, KR is better. Notably, when using $\hatZ^{(v)}$, KR consistently outperforms MASE across all signal-to-noise ratios for both $K$ is small or large. We conduct a similar comparison between the $\hatU$ and $\hatZ$ methods for KR and MASE in Figure~\ref{fig:m_diffARI_ZU}.

\begin{figure}
    \centering
        \begin{minipage}{\linewidth}
            \centering
            \includegraphics[width=\linewidth]{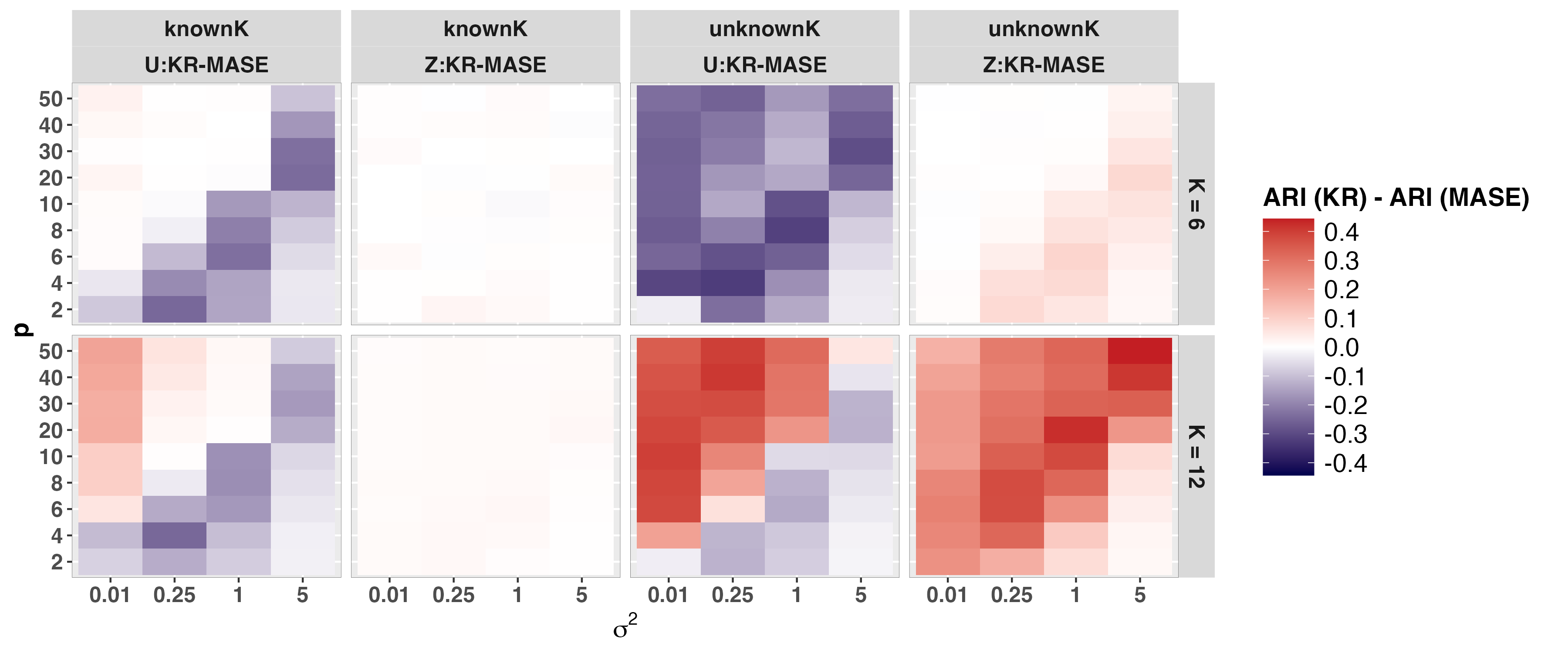}
            \caption{Comparison of joint clustering performance between KR and MASE across Z and U matrices. The left 2 columns of the heatmap show the difference in mean ARI (averaged over 100 repetitions) between KR and MASE when the number of joint clusters $K$ is \textbf{known}, using either the Z or U matrix. The right 2 columns show the difference in mean ARI between KR and MASE when $K$ is \textbf{unknown}. \textbf{Cool colors $\to$ MASE better; warm colors $\to$ KR better.} $K_1 = K_2 = 4$.
}
            \label{fig:m_diffARI_method}
        \end{minipage}
\end{figure}

\begin{figure}
    \centering
        \begin{minipage}{\linewidth}
            \centering
            \includegraphics[width=\linewidth]{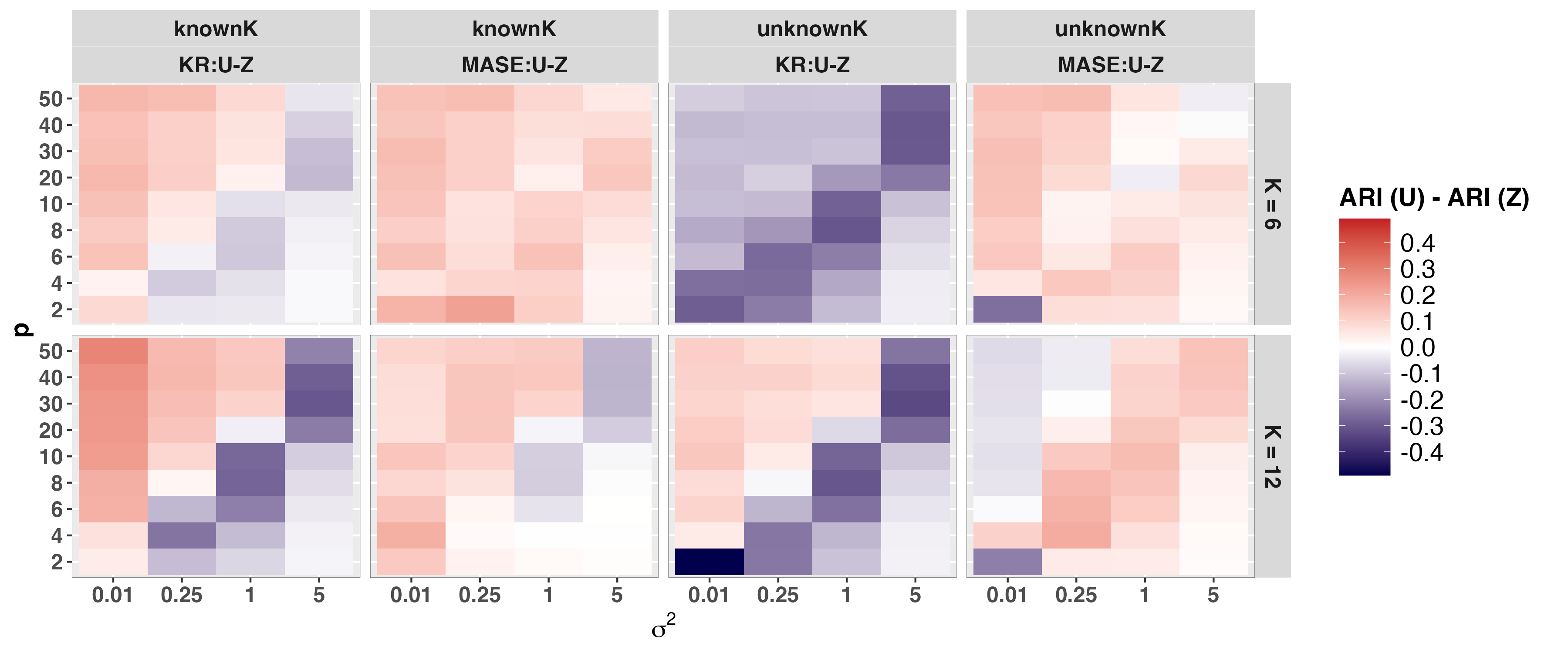}
            \caption{Comparison of joint clustering performance between Z and U across KR and MASE methods. The left 2 columns of the heatmap show the difference in mean ARI (averaged over 100 repetitions) between Z and U when the number of joint clusters $K$ is \textbf{known}, using either KR or MASE method. The right 2 columns show the difference in mean ARI between Z and U when $K$ is \textbf{unknown}. \textbf{Cool colors $\to$ Z better; warm colors $\to$ U better.} $K_1 = K_2 = 4$.
}
            \label{fig:m_diffARI_ZU}
        \end{minipage}
\end{figure}



\subsection{Varying Number of Clusters in Single View}

To obtain more robust conclusions, we conduct additional simulations where $K_1$ and $K_2$ vary from 3 to 10, and the true number of joint clusters $K$ takes four representative values: two within the range of $K \leq K_1 + K_2$, specifically, $\max(K_1, K_2)$ and $K_1 + K_2$, two beyond that range, $\lfloor K_1 K_2 / 2 \rfloor$ and $K_1 K_2$. The noise level is fixed at $\sigma^2 = 0.1$ and the dimension at $p = 101$. The results are shown as heatmaps in Figures~\ref{fig:k1k2_ARI_method} and~\ref{fig:k1k2_ARI_matrix}. 

Figure~\ref{fig:k1k2_ARI_method} further corroborates that KRAFTY’s advantage comes from its ability to correctly estimate $K$. In the left two columns (where $K$ is known), the heatmaps are nearly white or light-colored, indicating that KRAFTY and MASE achieve similar joint clustering performance across various regimes governing the size of $K$. In contrast, the right two columns (where $K$ is unknown) display more distinct warm colors as $K$ increases, reflecting KRAFTY’s improved performance.

\begin{figure}
    \centering
        \begin{minipage}{\linewidth}
            \centering
            \includegraphics[width=\linewidth]{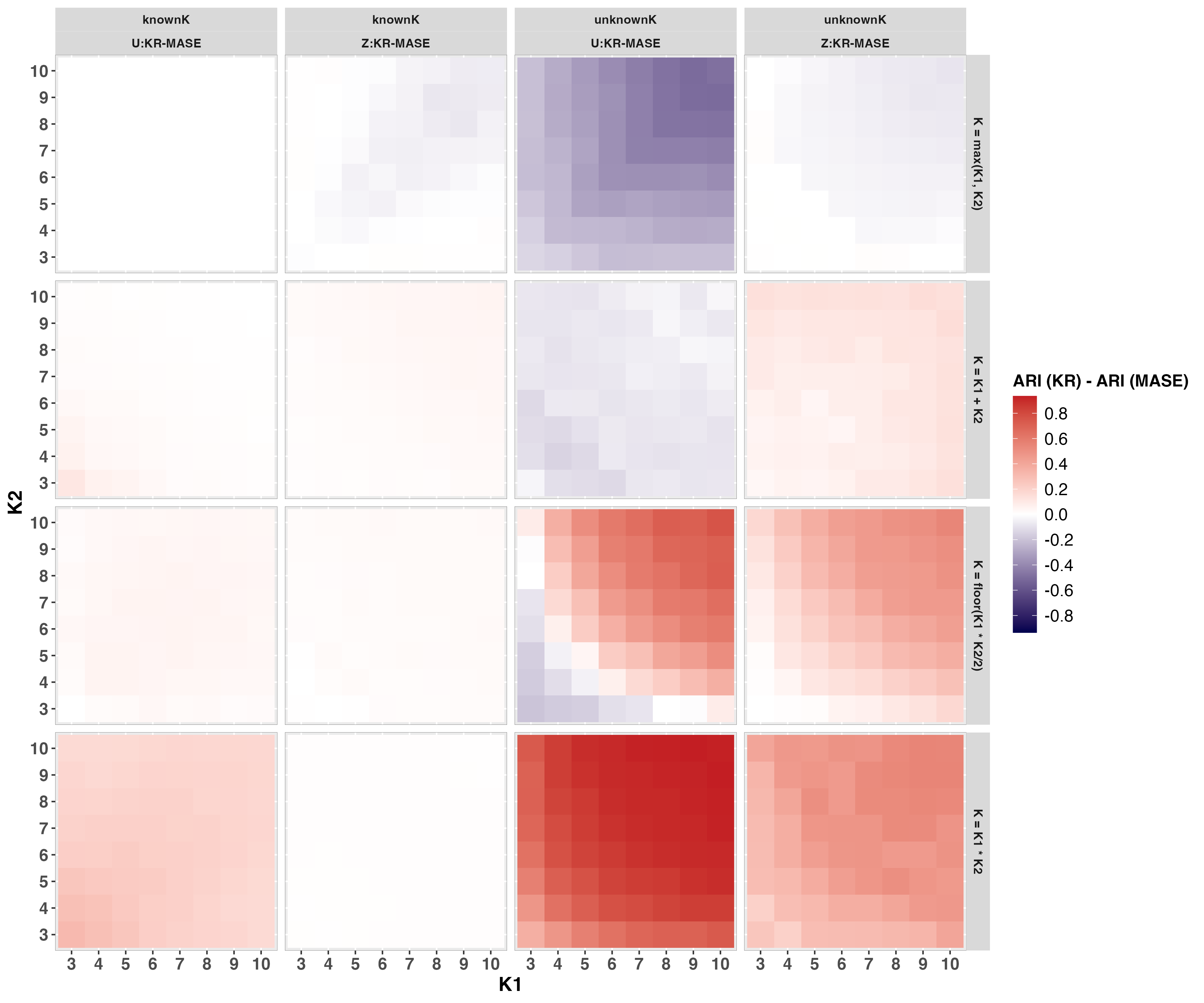}
            \caption{Comparison of joint clustering performance between KR and MASE using Z and U matrices across 4 different true $K$ settings, appearing in each row. The left 2 columns of the heatmap show the difference in mean ARI (averaged over 100 repetitions) between KR and MASE when the number of joint cluster \textbf{$K$ is known}, using either the Z or U matrix. The right 2 columns show the difference in mean ARI between KR and MASE when \textbf{$K$ is unknown}. \textbf{Cool colors $\to$ MASE better; warm colors $\to$ KR better.} $\sigma^2 = 0.1$, $p=101$.
}
            \label{fig:k1k2_ARI_method}
        \end{minipage}
\end{figure}

Between the choices of using $\hatZ^{(v)}$ matrices and $\hatU^{(v)}$ matrices, the second right column in Figure~\ref{fig:k1k2_ARI_matrix} confirms that when $K$ is small, KRAFTY with $\hatZ^{(v)}$ matrices yields better joint clustering performance, while when $K$ is large, KRAFTY with $\hatU^{(v)}$ matrices performs better, as indicated by cool colors in the upper panels and warm colors in the lower panels.

\begin{figure}
    \centering
        \begin{minipage}{\linewidth}
            \centering
            \includegraphics[width=\linewidth]{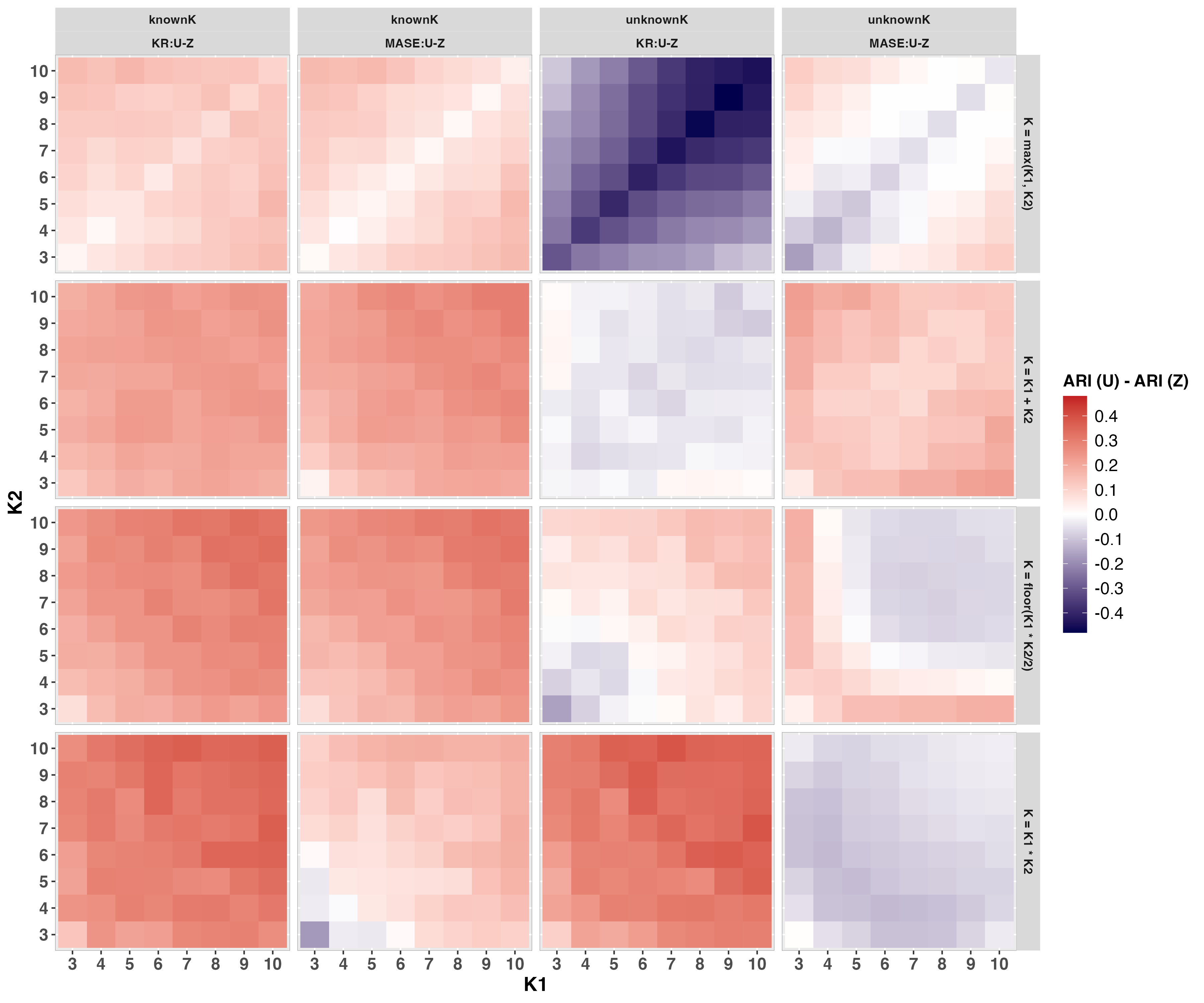}
            \caption{Comparison of joint clustering performance between Z and U using KR or MASE across 4 different true $K$ settings, appearing in each row. The left 2 columns of the heatmap show the difference in mean ARI (averaged over 100 repetitions) between U and Z when the number of joint cluster \textbf{$K$ is known}, using either KR or MASE. The right 2 columns show the difference in mean ARI between U and Z when \textbf{$K$ is unknown}. \textbf{Cool colors $\to$ Z better; warm colors $\to$ U better.} $\sigma = 0.1$, $p=101$.
}
            \label{fig:k1k2_ARI_matrix}
        \end{minipage}
\end{figure}

Figure~\ref{fig:k1k2_K_method} illustrates the performance of estimating $K$ under varying numbers of single-view clusters. Similar to the previous two figures, $K_1$ and $K_2$ range from 3 to 10, the dimension is fixed at 101, the noise level at $\sigma^2 = 0.1$, and the true $K$ is defined according to: $\max(K_1, K_2)$, $K_1+K_2$, $\lfloor K_1K_2/2 \rfloor$, and $K_1K_2$. The heatmap color represents the difference in Mean Absolute Error ($\text{MAE}_{\text{MASE}} - \text{MAE}_{\text{KR}}$); hence, cool colors indicate that MASE is better and warm colors mean that KR is better. Under scenarios $K=\max(K_1, K_2)$ and $K=K_1+K_2$, MASE generally outperforms KR since $K \leq K_1+K_2$. However, in some cases when using $\hatZ^{(v)}$ as inputs, KR can achieve comparable performance. When $K > K_1+K_2$, the advantage of KR becomes more pronounced as $K$ increasingly exceeds $K_1+K_2$.

\begin{figure}
    \centering
        \begin{minipage}{\linewidth}
            \centering
            \includegraphics[width=\linewidth]{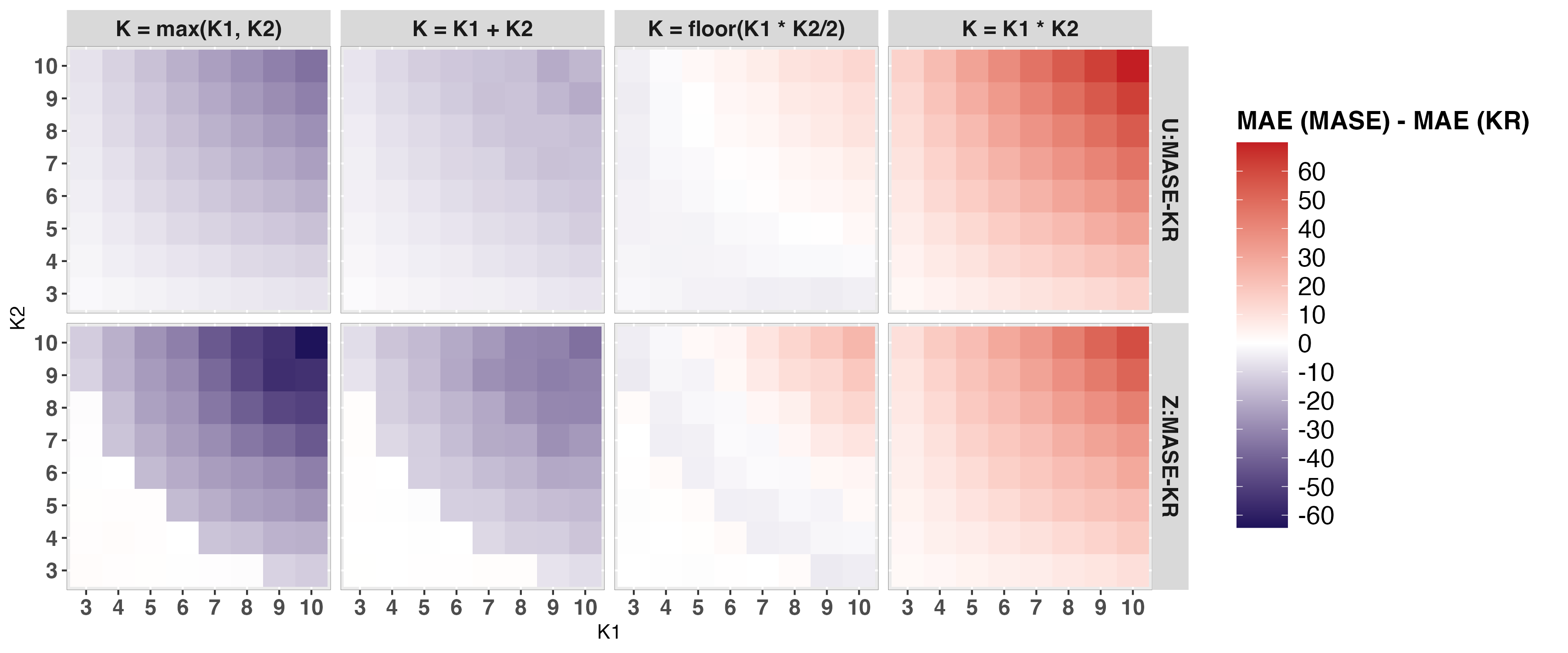}
            \caption{Comparison of joint cluster number estimation performance between KR and MASE using Z or U matrices across 4 true $K$ settings. The top row of the heatmap shows the difference in mean absolute error (averaged over 100 repetitions) between MASE and KR using U matrix. The bottom row shows the difference in mean absolute error between KR and MASE using Z matrix. \textbf{Cool colors $\to$ MASE better; warm colors $\to$ KR better.} $\sigma^2 = 0.1$, $p=101$.
}
            \label{fig:k1k2_K_method}
        \end{minipage}
\end{figure}

Figure~\ref{fig:k1k2_matrix} illustrates the difference in performance of estimating $K$ using $\hat{Z}$ or $\hat{U}$ as inputs ($\text{MAE}_Z - \text{MAE}_U$), across four true $K$ settings. The top row (KR) and bottom row (MASE) show different behaviors. For MASE, the choice of input has negligible impact. For KR, however, patterns vary by setting: for $K \leq K_1+K_2$, $\hatZ^{(v)}$ is preferred for small $K$, but $\hatU^{(v)}$ becomes better as $K$ increases; for $K = K_1 K_2$, $U$ is the better input. Conversely, in the $K = \lfloor K_1 K_2 / 2 \rfloor$ setting, $\hatZ^{(v)}$ yields lower errors specifically when $K < K_1+K_2$.

\begin{figure}
    \centering
        \begin{minipage}{\linewidth}
            \centering
            \includegraphics[width=\linewidth]{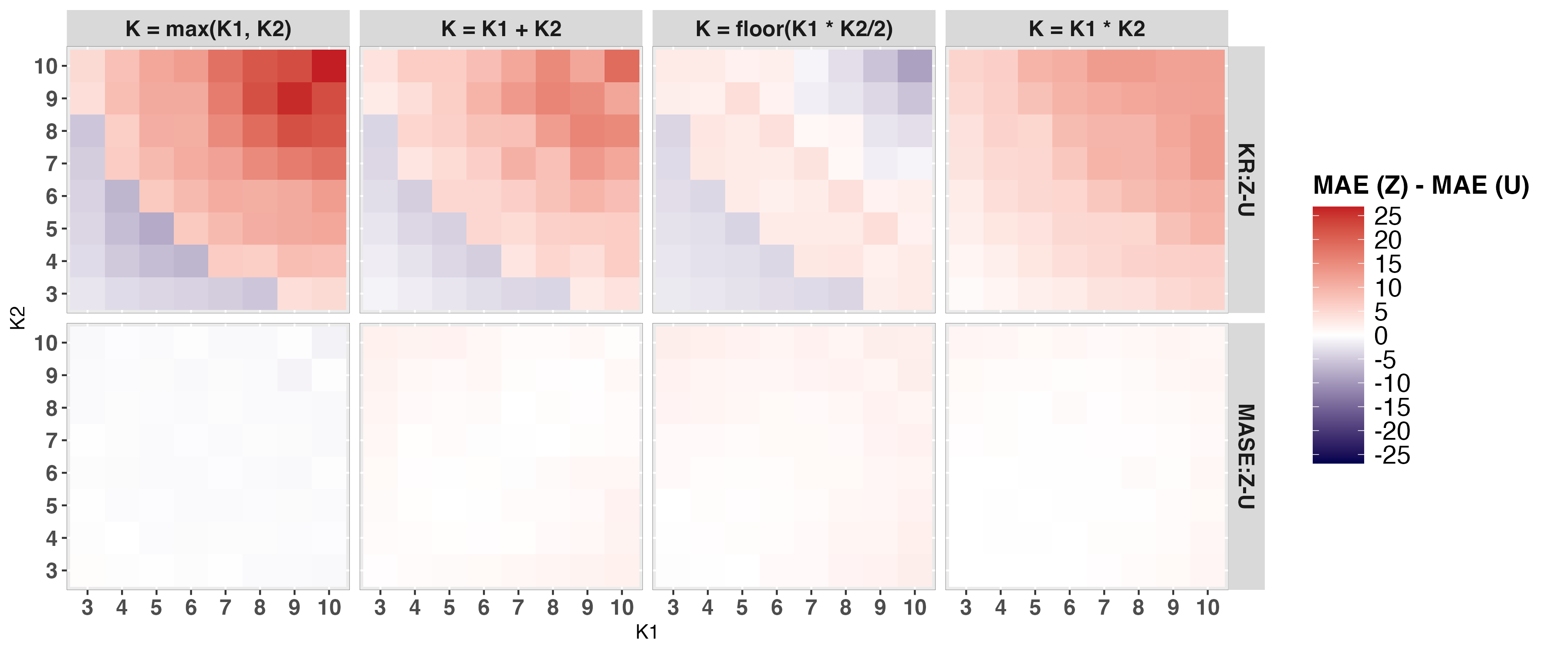}
            \caption{Comparison of joint cluster number estimation performance between Z and U using KR or MASE across 4 different true $K$ settings. The top row of the heatmap shows the difference in mean absolute error (averaged over 100 repetitions) when estimating $K$ using Z versus U using KR. \textbf{Cool colors $\to$ Z better; warm colors $\to$ U better.} The bottom row shows the difference in mean absolute error between Z and U using MASE. $\sigma=0.1, p = 101.$
}
            \label{fig:k1k2_matrix}
        \end{minipage}
\end{figure}

\subsection{Comparing X and U}

This section demonstrates the performance similarity between $\hat{U}^{(v)}$ and its scaled counterpart $\hat{X}^{(v)} = \hat{U}^{(v)} (D^{(v)})^{1/2}$. Consistent with our previous analysis, we examine the same three experimental settings.

\textbf{Varying Number of Joint Clusters}

Figure~\ref{fig:K_compareXU_K} illustrates the performance as $K$ varies at two noise levels. The results indicate that the choice between $\hatX^{(v)}$ and $\hatU^{(v)}$ has a negligible effect on joint clustering for both KR and MASE.
\begin{figure}
    \centering
        \begin{minipage}{\linewidth}
            \centering
            \includegraphics[width=\linewidth]{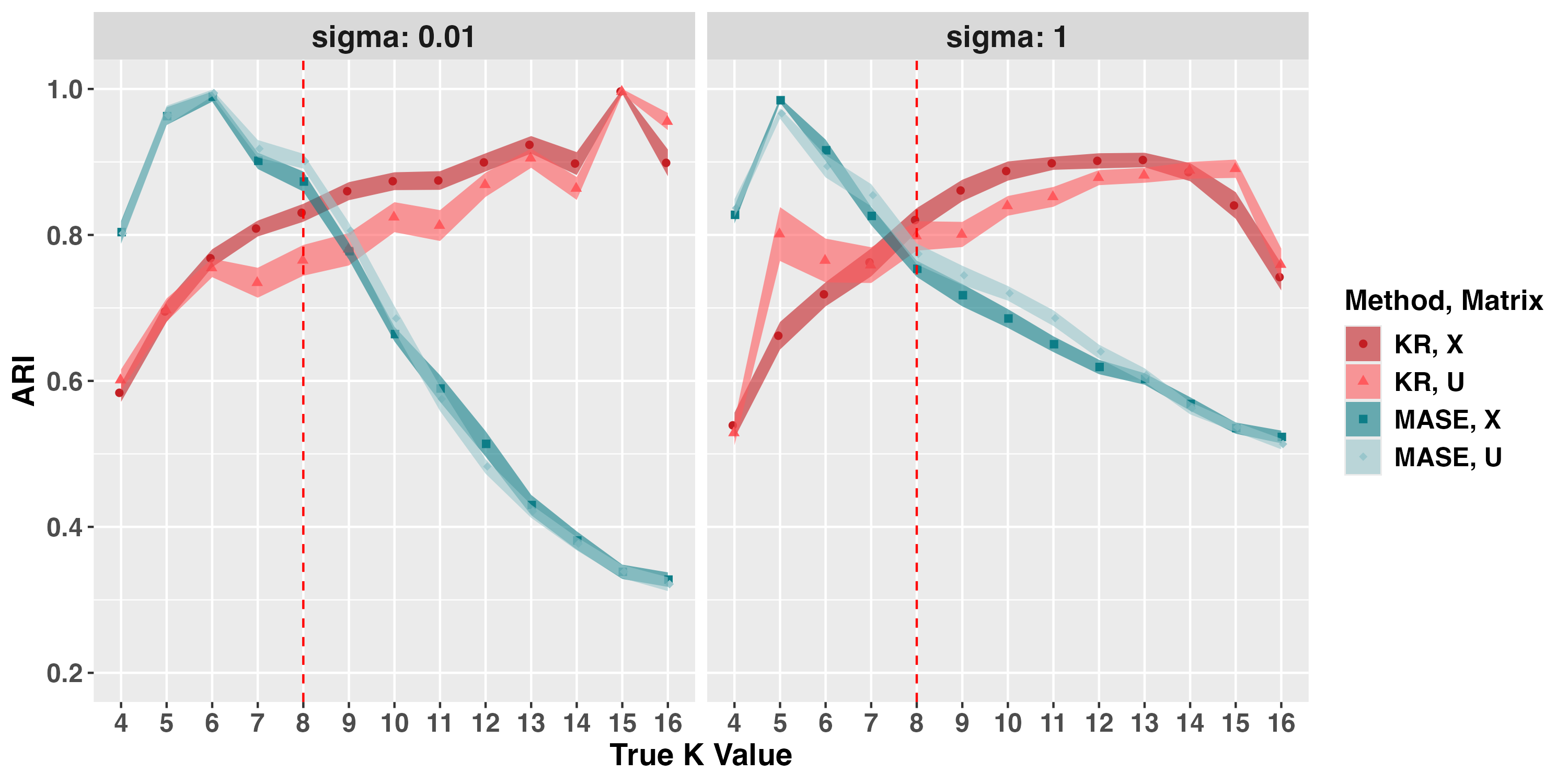}
            \caption{Joint clustering performance of the X and U matrices using KR and MASE under \textbf{unknown $K$} setting. The noise level $\sigma$ is set to either 0.01 or 1. ARI compares estimated joint clusters with ground truth. Here, $K_1 = K_2 = 4, p=20.$}
            \label{fig:K_compareXU_K}
        \end{minipage}  
\end{figure}

Figure~\ref{fig:K_compareXU_sigma} illustrates the performance as $\sigma^2$ varies for three different $K$ scenarios. We observe that the lines representing $\hatX^{(v)}$ and $\hatU^{(v)}$ overlap. It indicates that the choice between these inputs has a negligible effect on the joint clustering performance of both KR and MASE.
\begin{figure}
    \centering
        \begin{minipage}{\linewidth}
            \centering
            \includegraphics[width=\linewidth]{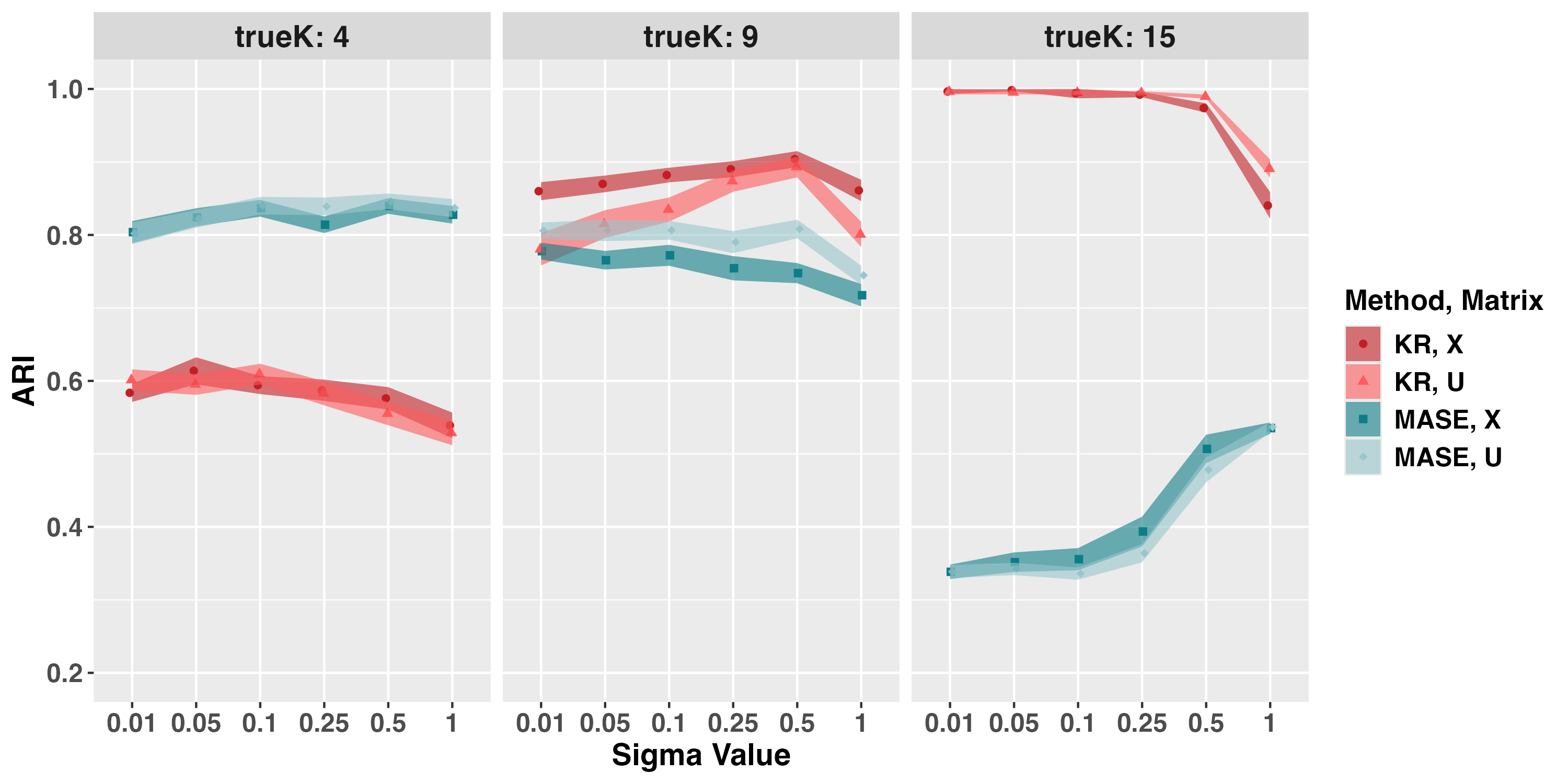}
            \caption{Joint clustering performance of the X and U matrices using KR and MASE under \textbf{unknown $K$} setting. The true number of joint clusters $K$ is  held constant at 4, 9, and 15 across experiments, with $\sigma$ varying. ARI compares estimated joint clusters with ground truth. Here, $K_1 = K_2 = 4, p=20.$}
            \label{fig:K_compareXU_sigma}
        \end{minipage}  
\end{figure}

Figure~\ref{fig:K_compareXU_estK} displays the heatmap comparing the performance of inputs $\hatX^{(v)}$ and $\hatU^{(v)}$ in estimating $K$. The observed difference in error is negligible. 
\begin{figure}
    \centering
        \begin{minipage}{\linewidth}
            \centering
            \includegraphics[width=\linewidth]{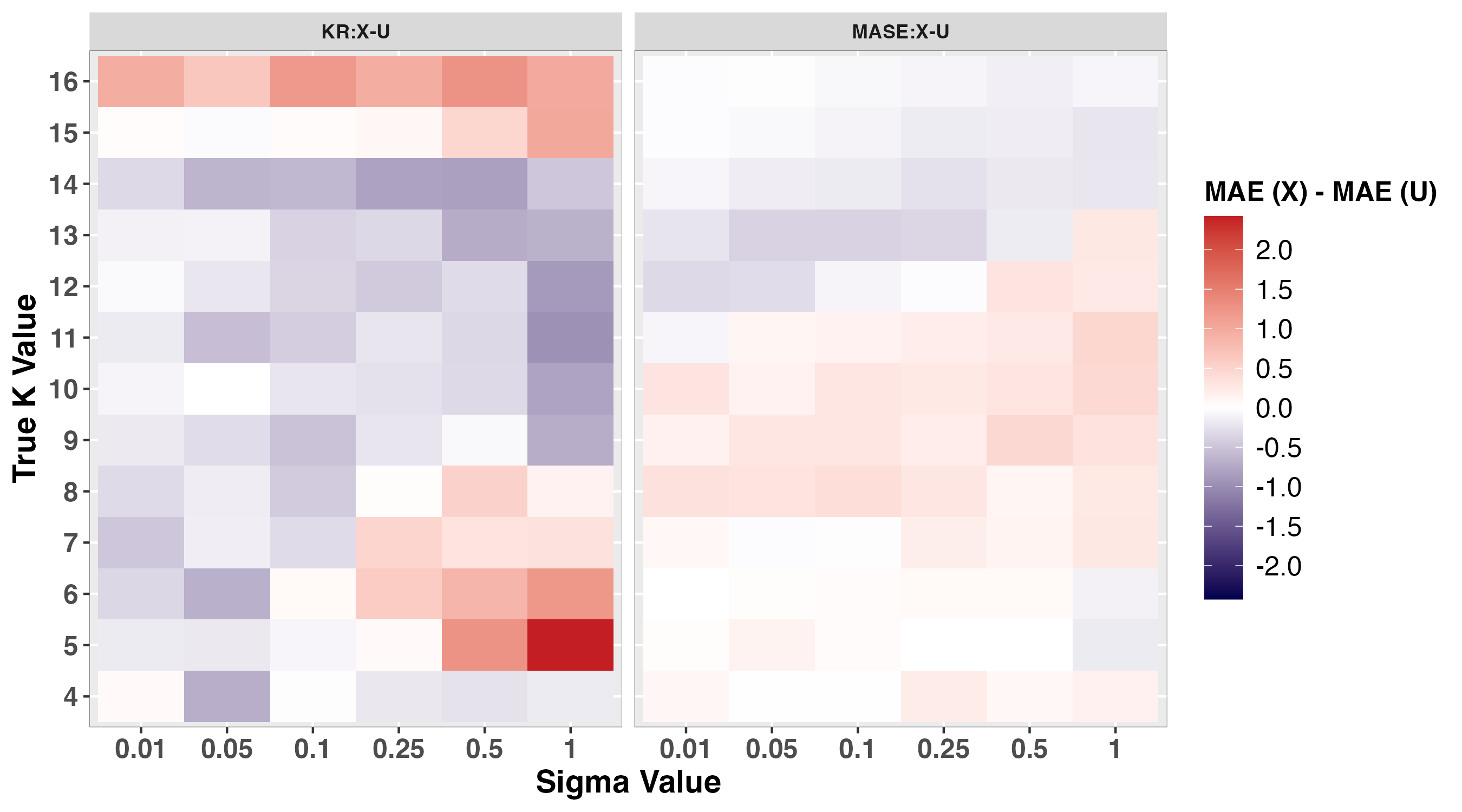}
            \caption{Comparison of joint cluster number estimation performance between X and U across KR and MASE methods. Heatmap colors represent the difference in mean absolute error (averaged over 100 repetitions) when estimating $K$ using X versus U. \textbf{Cool colors $\to$ X better; warm colors $\to$ U better.} $K_1 = K_2 = 4, p=20.$}
            \label{fig:K_compareXU_estK}
        \end{minipage}  
\end{figure}


\textbf{Varying Signal-to-Noise Ratio}
We compare the performance of inputs $\hatU^{(v)}$ and $\hatX^{(v)}$ in joint clustering (Figure~\ref{fig:m_diffARI_XU}) and in estimating $K$ (Figure~\ref{fig:m_K_XU_method}) when varying dimensions and noise levels. Consistent with previous findings, the performance discrepancies between the two inputs remain minimal.


\begin{figure}
    \centering
        \begin{minipage}{\linewidth}
            \centering
            \includegraphics[width=\linewidth]{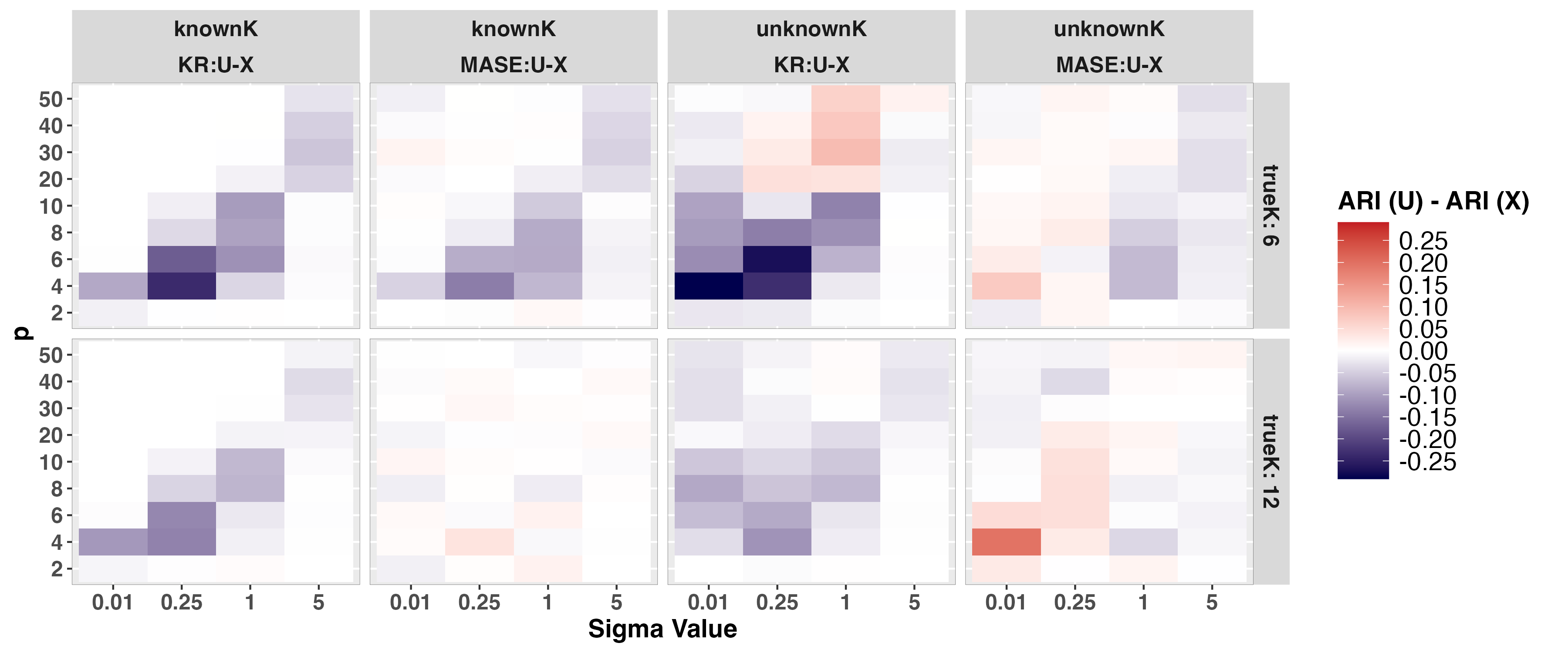}
            \caption{Comparison of joint clustering performance between X and U across KR and MASE methods. The left 2 columns of the heatmap show the difference in mean ARI (averaged over 100 repetitions) between X and U when the number of joint clusters $K$ is \textbf{known}, using either KR or MASE method. The right 2 columns show the difference in mean ARI between X and U when $K$ is \textbf{unknown}. \textbf{Cool colors $\to$ X better; warm colors $\to$ U better.} $K_1 = K_2 = 4$.
}
            \label{fig:m_diffARI_XU}
        \end{minipage}
\end{figure}

\begin{figure}
    \centering
        \begin{minipage}{\linewidth}
            \centering
            \includegraphics[width=\linewidth]{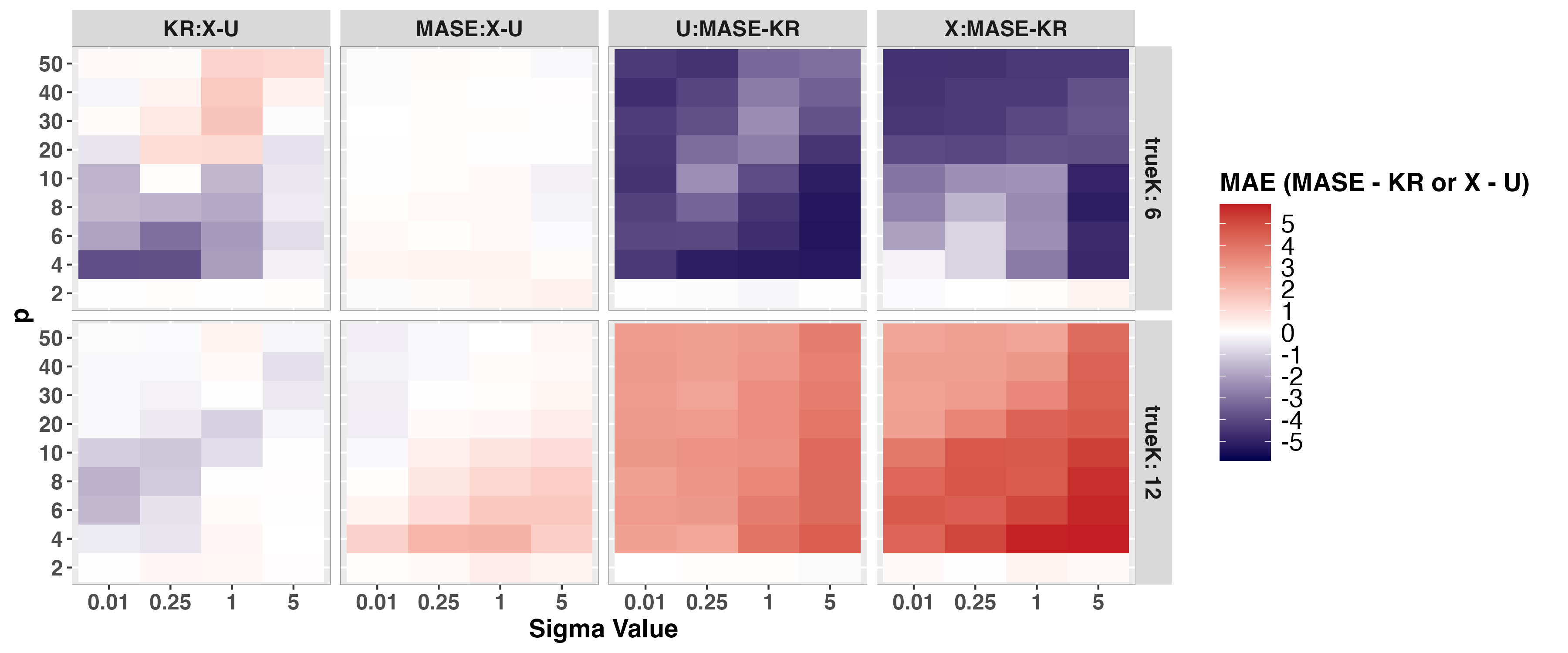}
            \caption{Comparison of joint cluster number estimation performance between KR and MASE across X and U matrices. The left two columns of the heatmap show the difference in mean absolute error (averaged over 100 repetitions) when estimating $K$ using X versus U. \textbf{Cool colors $\to$ X better; warm colors $\to$ U better.} The right 2 columns show the difference in mean absolute error between KR and MASE methods. \textbf{Cool colors $\to$ MASE better; warm colors $\to$ KR better.} Here, $K_1 = K_2 = 4$.}
            \label{fig:m_K_XU_method}
        \end{minipage}
\end{figure}

\textbf{Varying Number of Clusters in Single View}
We compare the performance of inputs $\hatU^{(v)}$ and $\hatX^{(v)}$ in joint clustering (Figure~\ref{fig:k1k2_XU_ARI_matrix}) and in estimating $K$ (Figure~\ref{fig:k1k2_XU_matrix}), across varying single-view cluster numbers and four different $K$ settings. Consistent with previous findings, the performance discrepancies between the two inputs remain negligible.

\begin{figure}
    \centering
        \begin{minipage}{\linewidth}
            \centering
            \includegraphics[width=\linewidth]{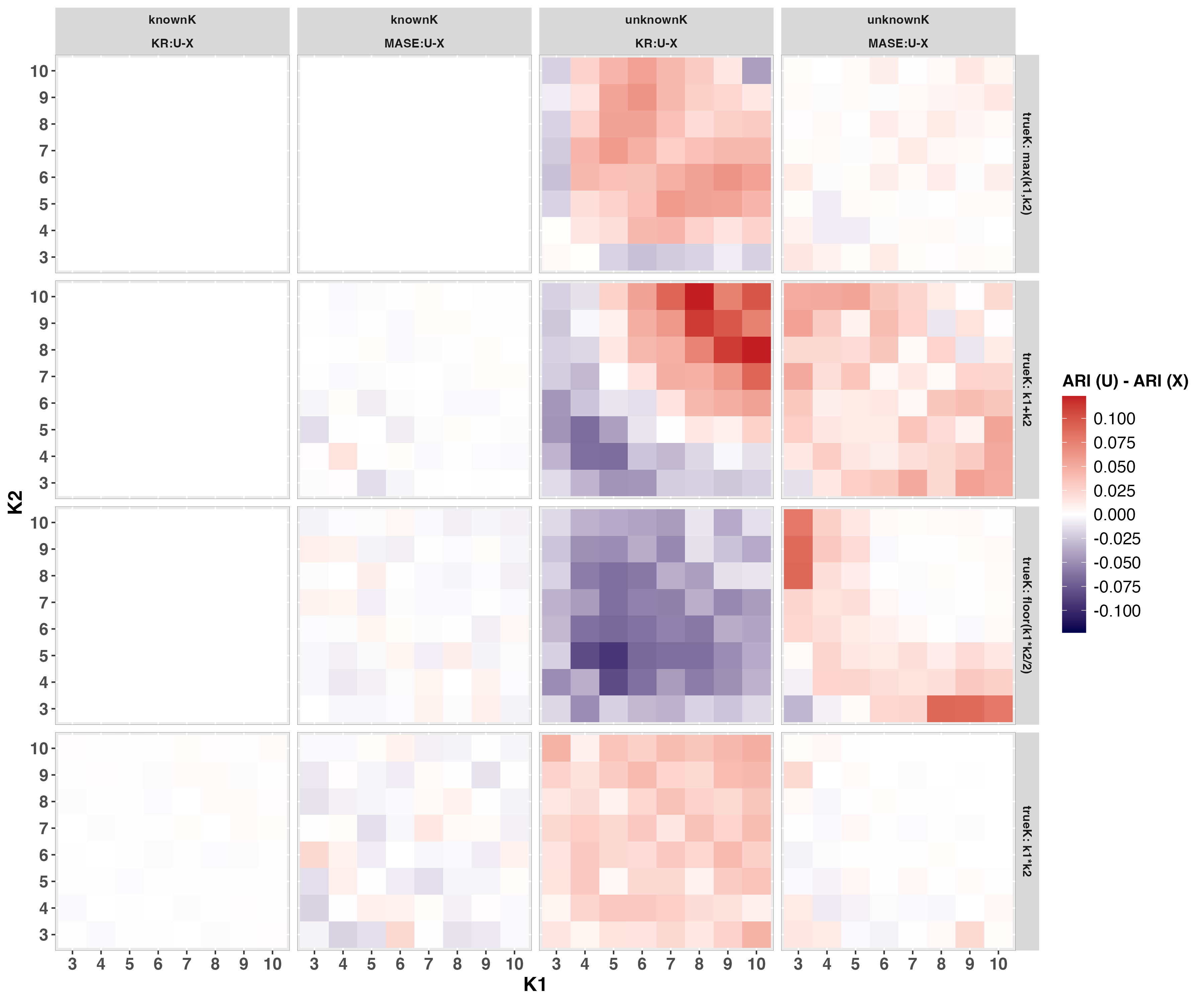}
            \caption{Comparison of joint clustering performance between X and U using KR or MASE across 4 different true $K$ settings. The left 2 columns of the heatmap show the difference in mean ARI (averaged over 100 repetitions) between U and X when the number of joint cluster \textbf{$K$ is known}, using either KR or MASE. The right 2 columns show the difference in mean ARI between U and X when \textbf{$K$ is unknown}. \textbf{Cool colors $\to$ X better; warm colors $\to$ U better.} $\sigma = 0.1$, $p=101$.
}
            \label{fig:k1k2_XU_ARI_matrix}
        \end{minipage}
\end{figure}

\clearpage
\section{Real Data Analysis Supplementary}
In this section, we provide supplementary figures for the real data analysis, along with the two-year analysis.
\subsection{Single-year Results on Global Trade Data Analysis}
Figure~\ref{fig:adjacency_1yr} presents the adjacency matrix for the raw chicken meat trade network. The matrix is symmetrized by summing the export and import volumes for each pair of countries. Rows and columns are ordered according to the clusters estimated by KRAFTY, arranged from Cluster 1 to Cluster 5 (consistent with the sequence in Figure~\ref{fig:map_1yr}). Notably, we observe that trading volume within Cluster 1 and Cluster 4 is small.
\begin{figure}
    \centering
        \begin{minipage}{\linewidth}
            \centering
            \includegraphics[width=\linewidth]{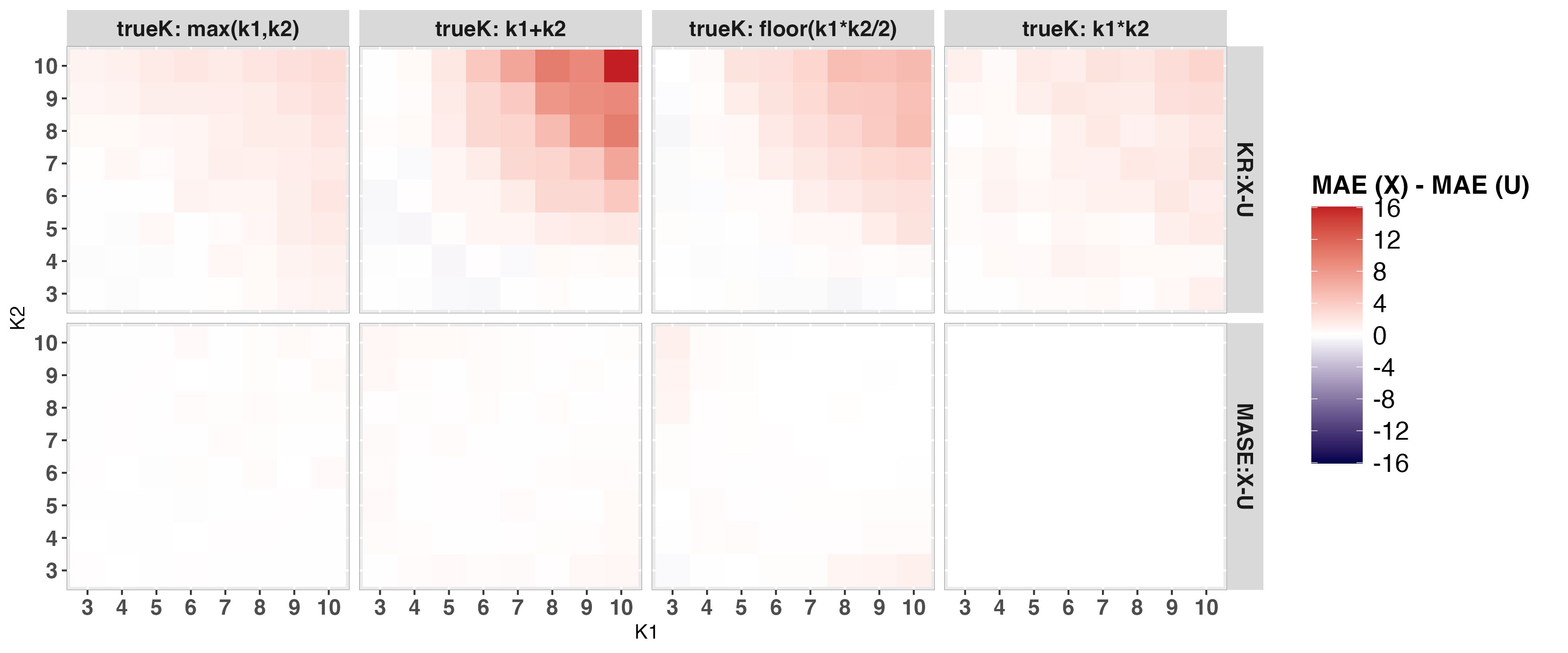}
            \caption{Comparison of joint cluster number estimation performance between X and U using KR or MASE across 4 different true $K$ settings. The top row of the heatmap shows the difference in mean absolute error (averaged over 100 repetitions) when estimating $K$ using X versus U using KR. \textbf{Cool colors $\to$ X better; warm colors $\to$ U better.} The bottom row shows the difference in mean absolute error between X and U using MASE. $\sigma=0.1, p= 101.$
}
            \label{fig:k1k2_XU_matrix}
        \end{minipage}
\end{figure}

\subsection{Two-year Results on Global Trade Data Analysis}

\begin{figure}[ht]
    \centering
    \includegraphics[width=0.6\linewidth]{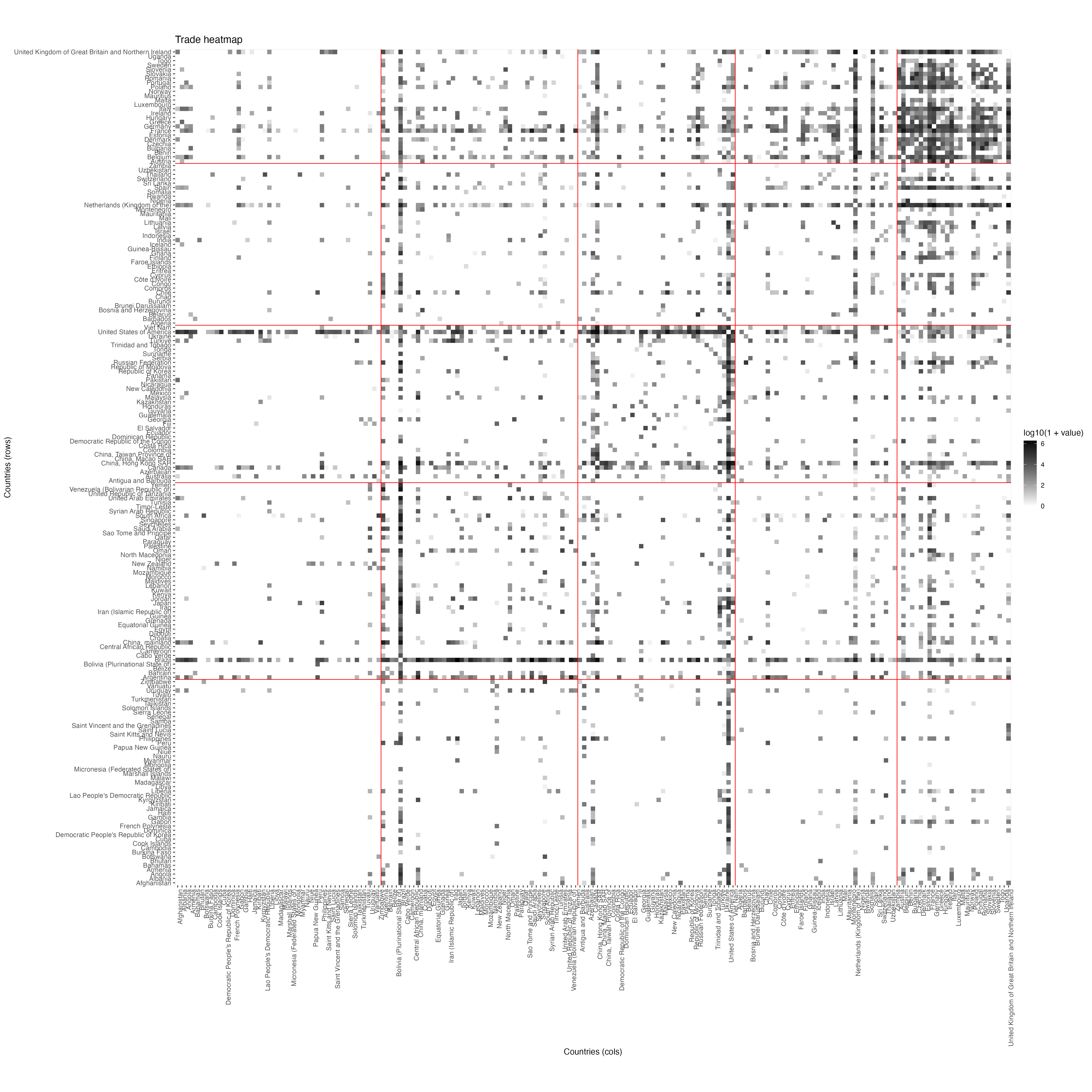}
    \caption{Global trade adjacency heatmap; countries are sorted in KRAFTY joint clusters and boundaries are indicated by red lines.}
    \label{fig:adjacency_1yr}
\end{figure}

We aim to identify time-invariant latent structure, i.e., the ‘unchanging’ clusters, among countries acting as exporters or importers. We apply KRAFTY and MASE to $\hat Z^{(v)}$ with $v\in\{(2010,\mathrm{imp}),(2023,\mathrm{exp})\}$ 
and $v\in\{(2010,\mathrm{exp}),(2023,\mathrm{exp})\}$. The results are shown in Figs.~\ref{fig:scree_exp}–\ref{fig:map_2yr_imp}. The scree plots display a clear elbow under KRAFTY, which are consistent with our theory; moreover, KRAFTY and MASE yield highly consistent joint clusters (Fig.~\ref{fig:alluvial_2yr_exp_kr_mase}, ~\ref{fig:alluvial_2yr_imp_kr_mase}), which aligns with our simulations. Comparing KRAFTY’s joint clusters to combinations of single-view clusters, we can see how the joint clusters are derived from the single-view partitions (Figs.~\ref{fig:alluvial_exp}, ~\ref{fig:alluvial_imp}). One interpretation is that country-level export/import behavior shifts across years, leading to view-specific differences, while a stable joint structure persists.

\begin{figure}
    \centering   
    \begin{minipage}[b]{0.48\linewidth}
        \centering
        \includegraphics[width=\linewidth]{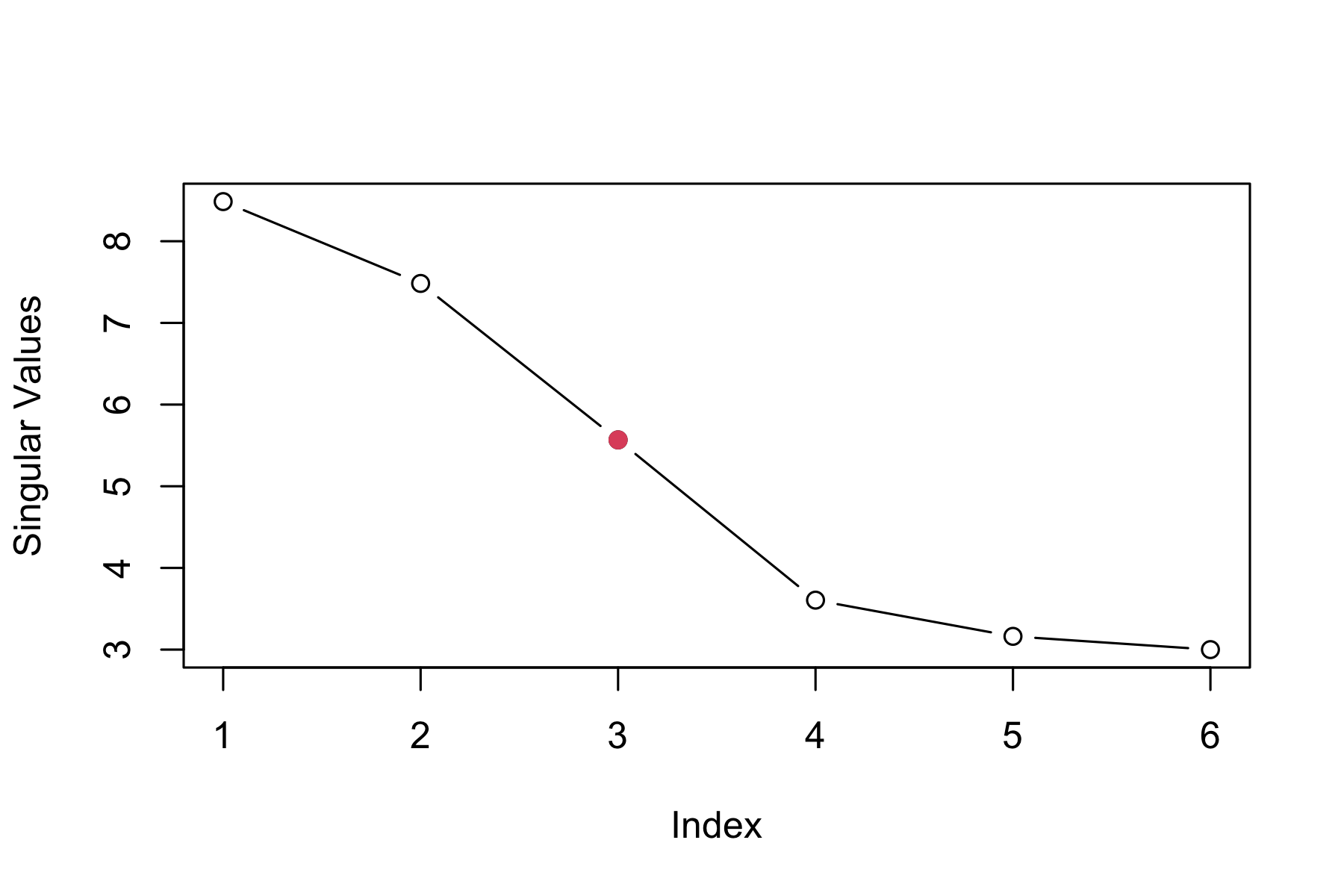}
        \vspace{3pt}
        \centerline{\small (a) KRAFTY}
        \label{fig:scree_kr_exp}
    \end{minipage}
    \hfill 
    \begin{minipage}[b]{0.48\linewidth}
        \centering
        \includegraphics[width=\linewidth]{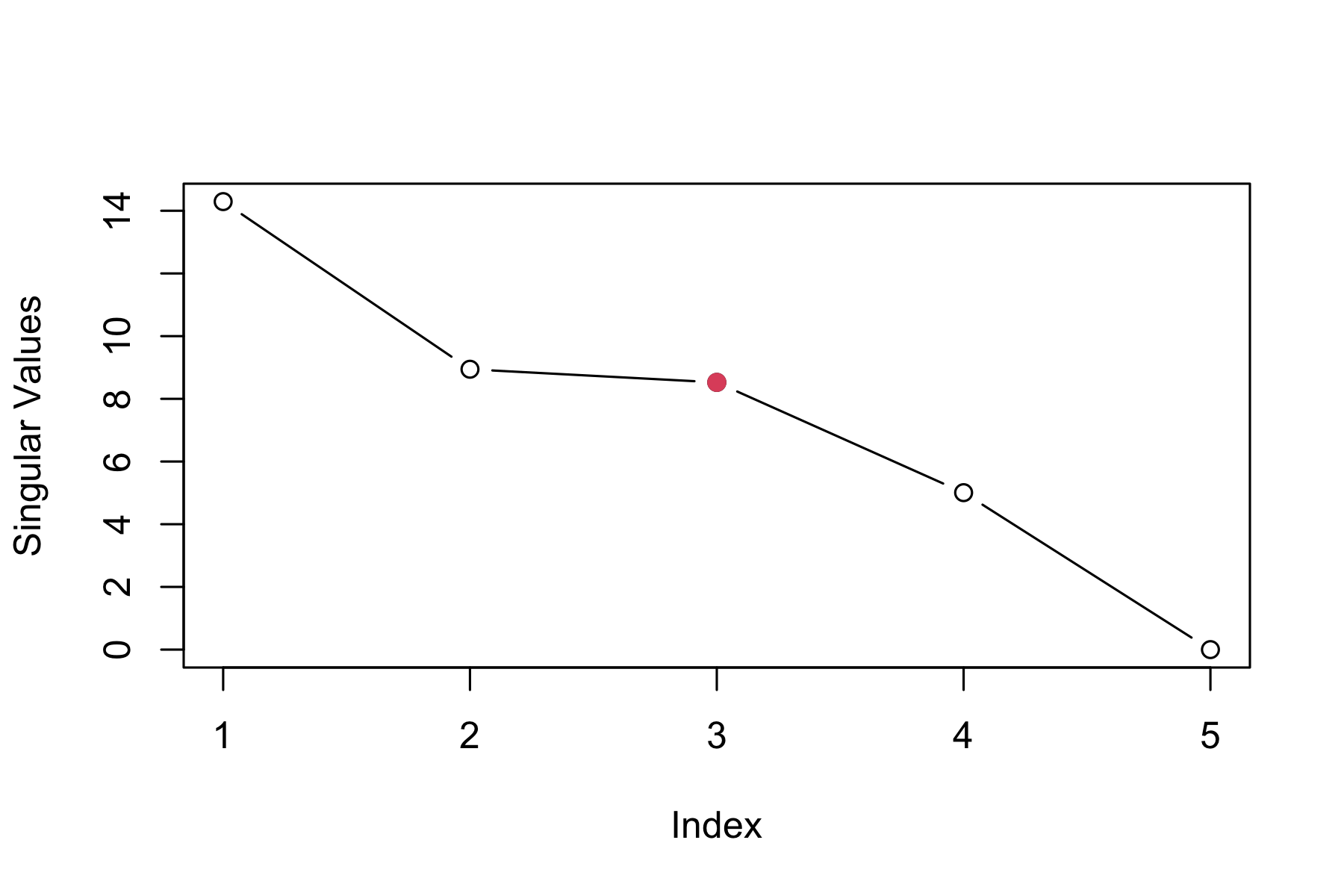}
        \vspace{3pt}
        \centerline{\small (b) MASE}
        \label{fig:scree_mase_exp}
    \end{minipage}
    
    \caption{Eigenvalue scree plot for two-year exporters only; the red dot indicates the selected dimension.}
    \label{fig:scree_exp}
\end{figure}

\begin{figure}
    \centering
    
    \begin{minipage}[b]{0.48\linewidth}
        \centering
        \includegraphics[width=\linewidth]{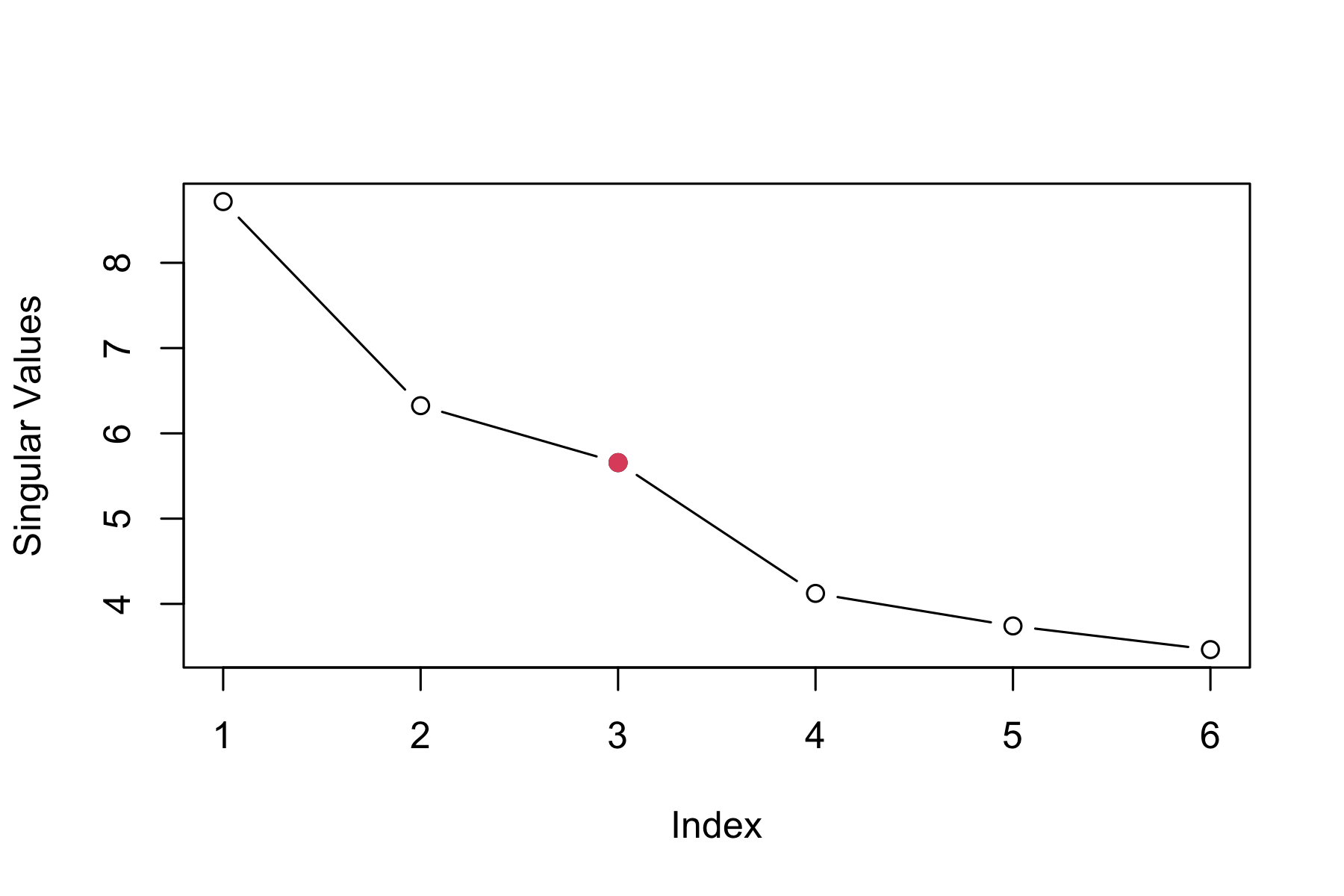}
        \vspace{3pt}
        \centerline{\small (a) KRAFTY}
        \label{fig:scree_kr_imp}
    \end{minipage}
    \hfill 
    \begin{minipage}[b]{0.48\linewidth}
        \centering
        \includegraphics[width=\linewidth]{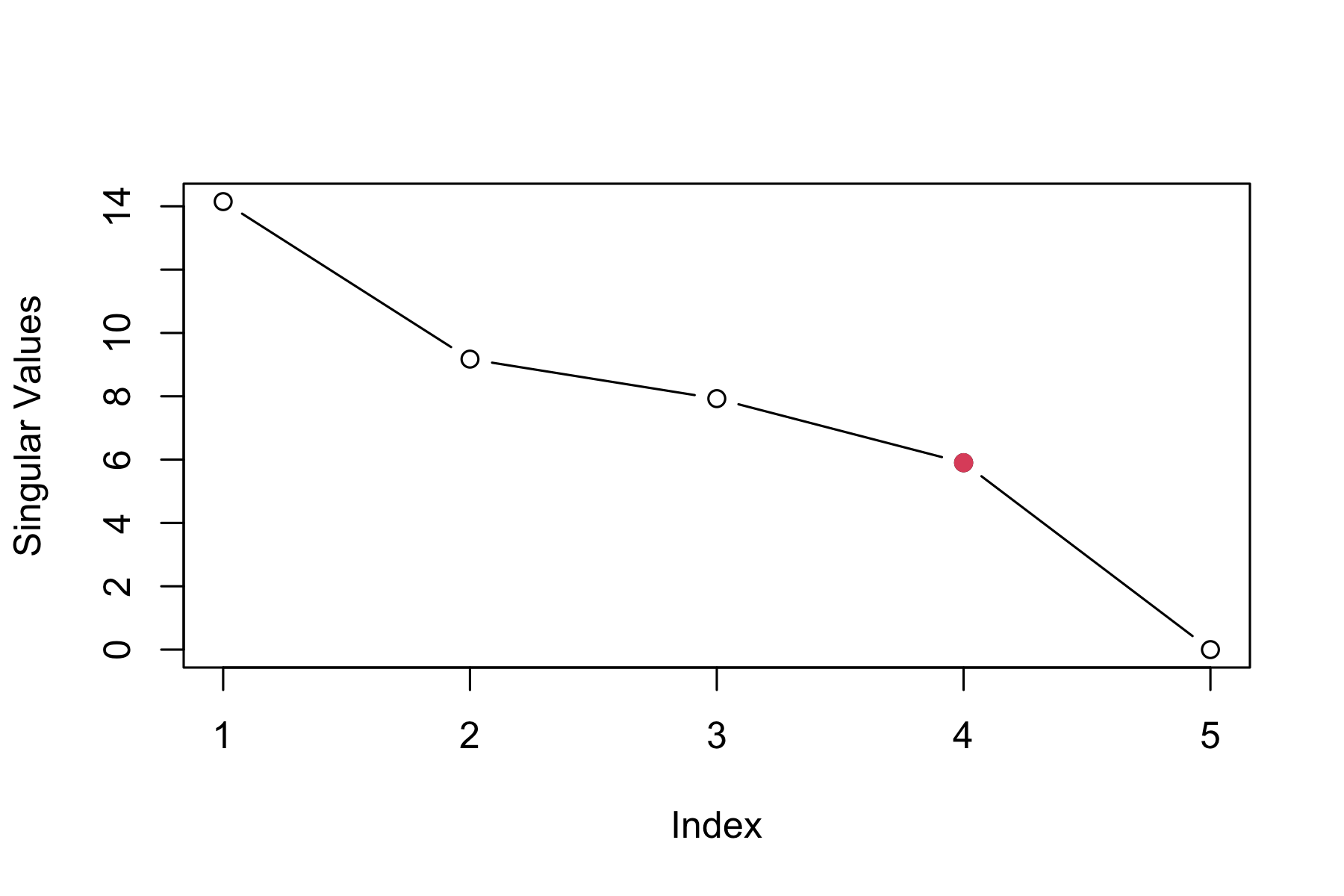}
        \vspace{3pt}
        \centerline{\small (b) MASE}
        \label{fig:scree_mase_imp}
    \end{minipage}
    
    \caption{Eigenvalue scree plot for two-year importers only; the red dot indicates the selected dimension.}
    \label{fig:scree_imp}
\end{figure}

\begin{figure}
    \centering
    \includegraphics[width=0.6\linewidth]{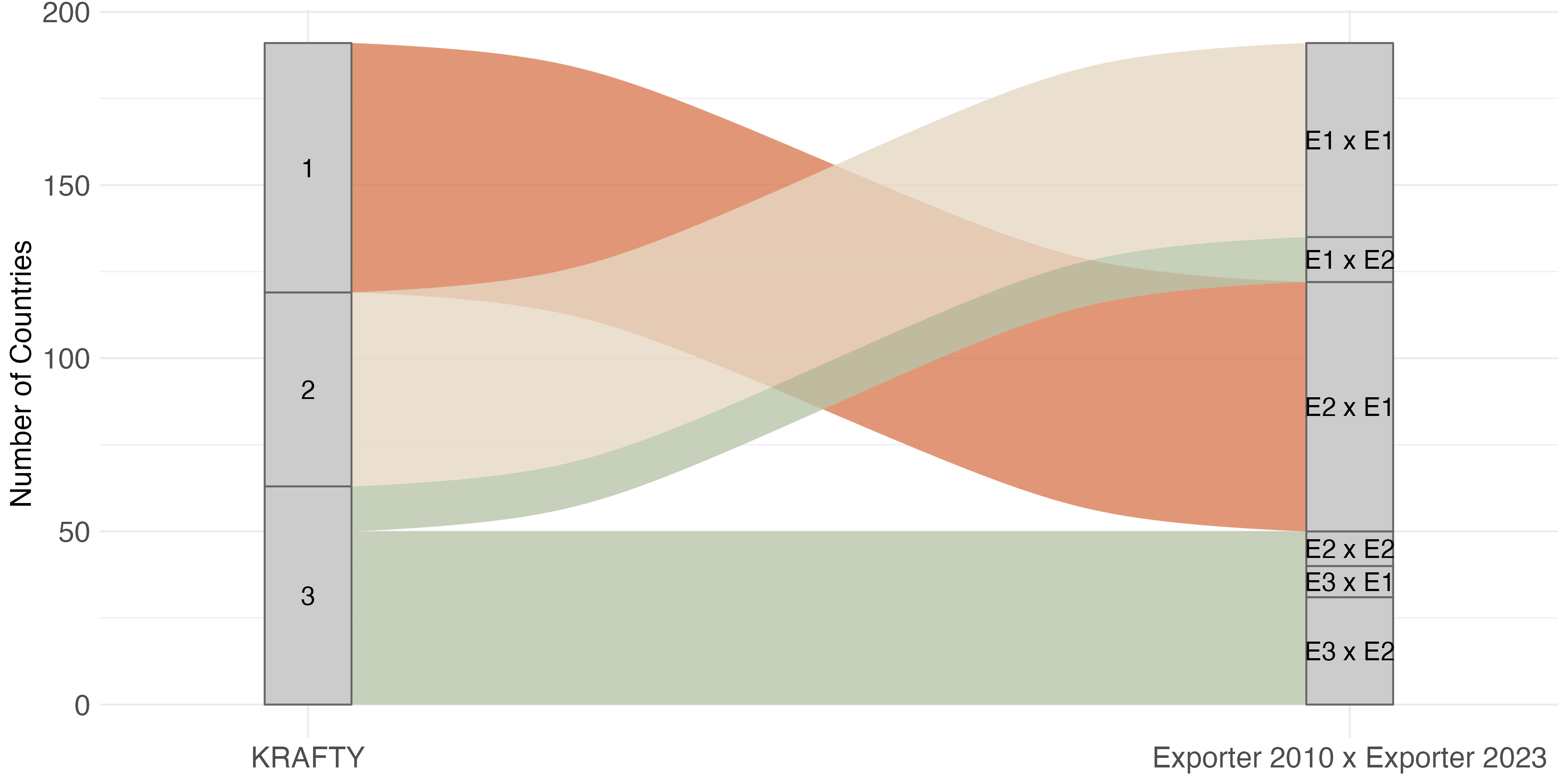}
    \caption{Alluvial plot of joint clusters by KRAFTY vs. single-sourced clusters (exporters); identical colors indicate the same set of entities, and block height reflects the number of countries in each cluster.}
    \label{fig:alluvial_exp}
\end{figure}

\begin{figure}
    \centering
    \includegraphics[width=0.6\linewidth]{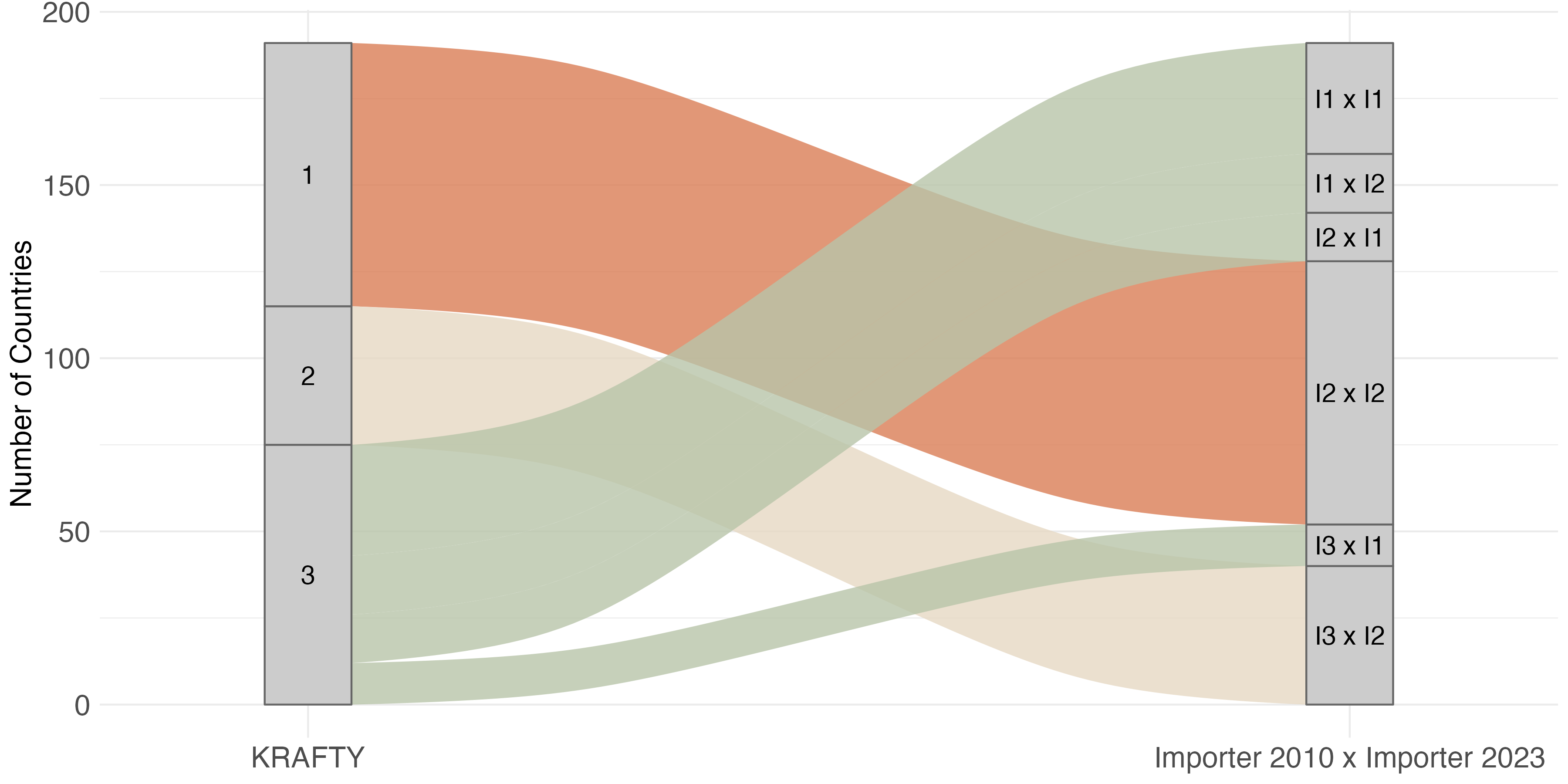}
    \caption{Alluvial plot of joint clusters by KRAFTY vs single-sourced clusters (importers); identical colors indicate the same set of entities, and block height reflects the number of countries in each cluster.}
    \label{fig:alluvial_imp}
\end{figure}

\begin{figure}
    \centering
    \includegraphics[width=0.6\linewidth]{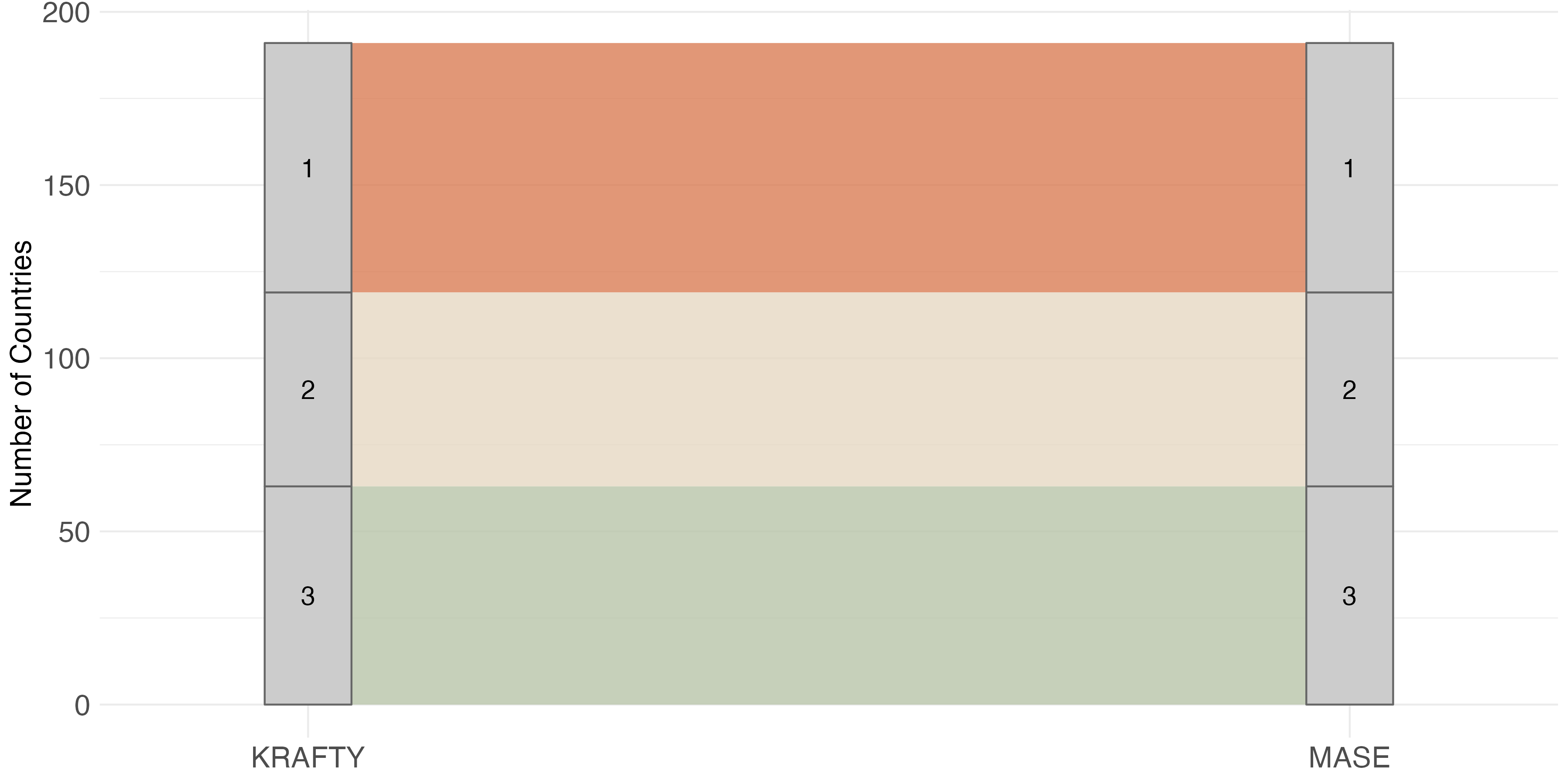}
    \caption{Alluvial plot of Joint Clusters by KRAFTY vs by MASE (Exporters); identical colors indicate the same set of entities, and block height reflects the number of countries in each cluster.}
    \label{fig:alluvial_2yr_exp_kr_mase}
\end{figure}

\begin{figure}
    \centering
    \includegraphics[width=0.6\linewidth]{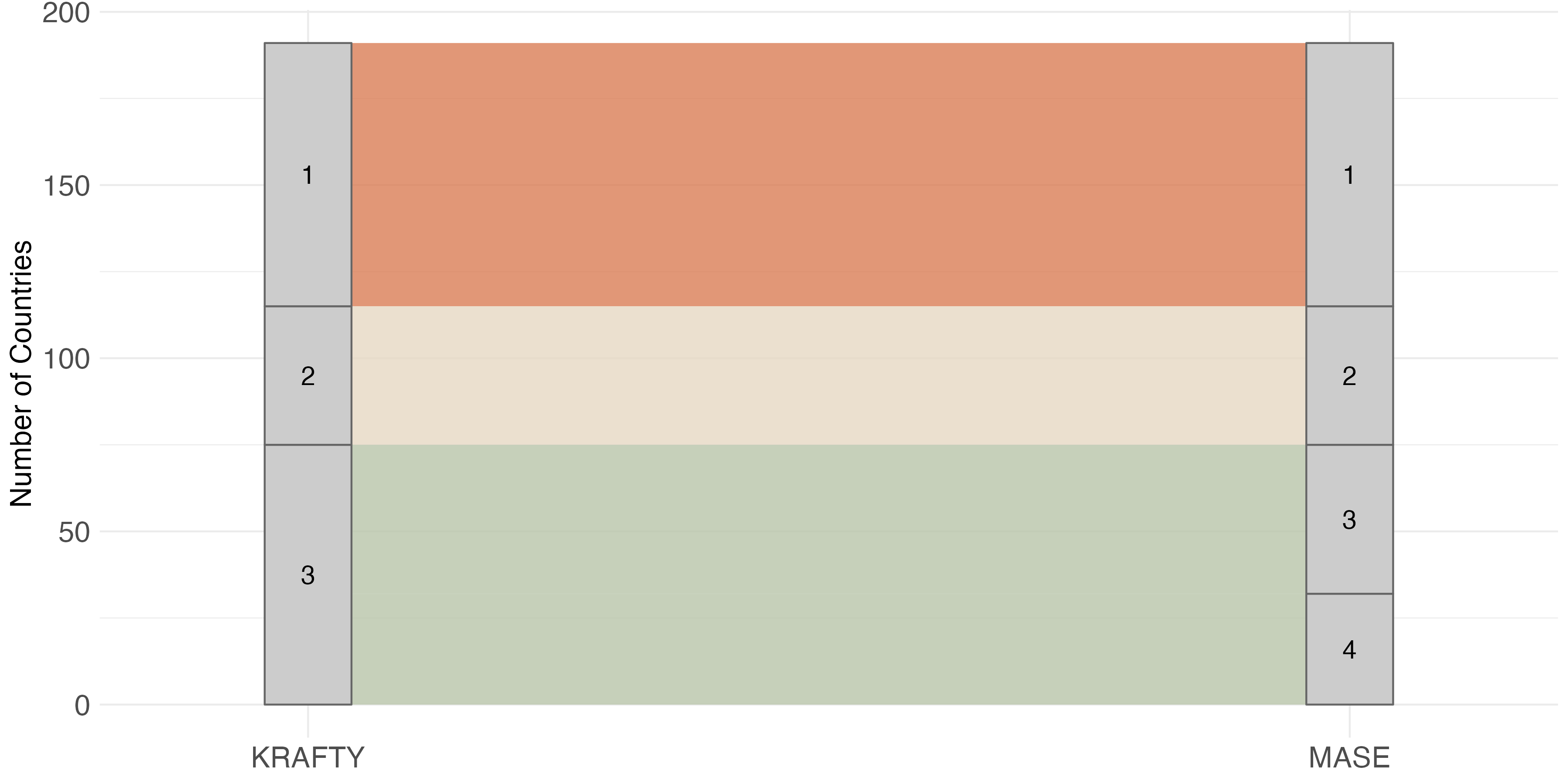}
    \caption{Alluvial plot of joint clusters by KRAFTY vs by MASE (importers); identical colors indicate the same set of entities, and block height reflects the number of countries in each cluster.}
    \label{fig:alluvial_2yr_imp_kr_mase}
\end{figure}

\begin{figure}
    \centering
    \includegraphics[width=0.8\linewidth]{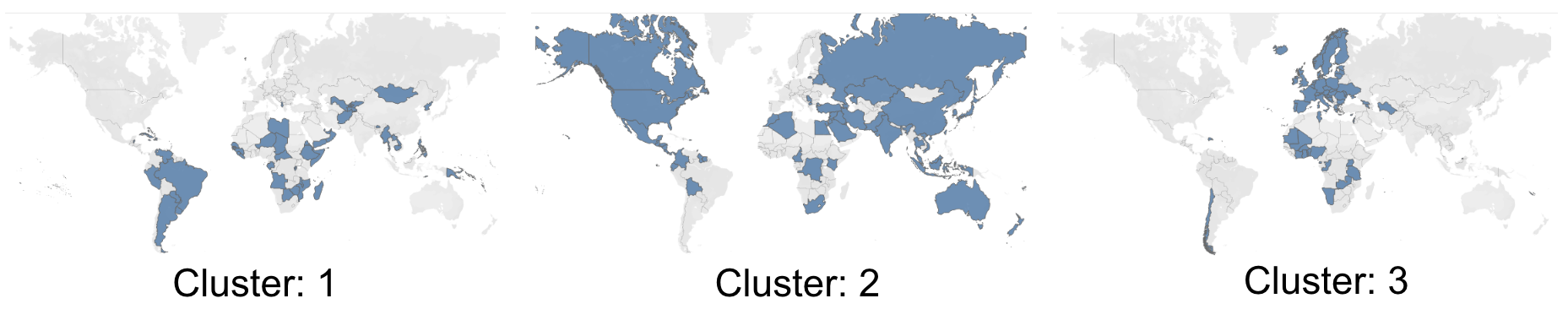}
    \caption{World map colored by KRAFTY joint clusters for two-year exporters; blue color denotes cluster membership.}
    \label{fig:map_2yr_exp}
\end{figure}

\begin{figure}
    \centering
    \includegraphics[width=0.8\linewidth]{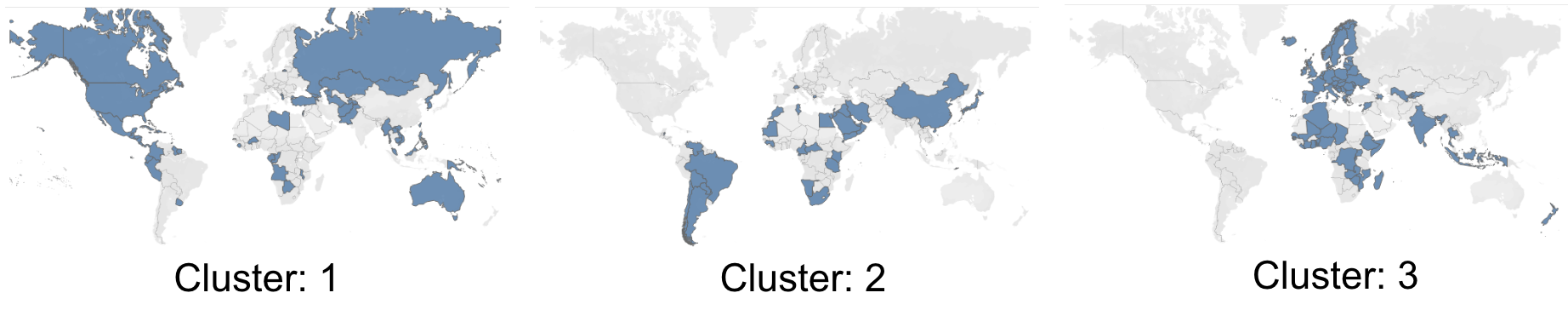}
    \caption{World map colored by KRAFTY joint clusters for two-year importers; blue color denotes cluster membership.}
    \label{fig:map_2yr_imp}
\end{figure}



\end{document}